\newcommand{\dist}{\text{dist}} 
\newcommand{\diam}{\text{diam}}
\newcommand{\supp}{\text{{\rm supp }}}
\newcommand{\R}{{\mathbb R}} 
\def \e {\varepsilon}
\newtheorem{theorem}{Theorem}[section]
\newtheorem{corollary}{Corollary}[section]
\newtheorem{lemma}{Lemma}[section]
\newtheorem{proposition}{Proposition}[section]
\newtheorem{definition}{Definition}[section]
\newtheorem{remark}{Remark}[section]
\numberwithin{equation}{section}
\def\di{\hbox{div}\,}
\def\Lip{\hbox{Lip}\,}
\def\bR{\mathbb{R}}
\def\d{\partial}
\def\ep{\epsilon}
\def\de{\delta}
\def\Om{\Omega}
\def\vfi{\varphi}
\def\cE{\mathcal E}
\def\cH{{\mathcal{H}}}
\def\cI{{\mathcal{I}}}
\def\cJ{{\mathcal{J}}}
\def\cL{{\mathcal{L}}}
\def\d{\partial}
\def\proof{{\it Proof.}\quad}
\begin{document}

\title{Special cases of the planar least gradient problem}
\author{Wojciech G\'orny, Piotr Rybka, Ahmad Sabra}\address{Faculty of Mathematics, Informatics and  Mechanics\\
The University of Warsaw\\
ul. Banacha 2, 02-097 Warsaw, POLAND}

\maketitle

\begin{abstract}
We study two special cases of the planar least gradient problem. In the first one, 
the boundary conditions are imposed on a part of the strictly convex domain. In the second 
case, we impose the Dirichlet data on the boundary of a rectangle, an example of convex but not strictly convex domain. We show the existence of solutions and study their properties for particular cases of data. 
\end{abstract}

\section{Introduction}\label{Si}
We are interested in the two-dimensional case of the least gradient problem 
\begin{equation}\label{ri1}
 \min\left\{\int_\Omega | D u|:\ u \in BV(\Omega),\ T_{\Gamma}u= f\right\},
\end{equation}
where $\Gamma$ is an open subset of $\d\Omega$. Here, $T_\Gamma u$ denotes the restriction of the trace $Tu$ to $\Gamma$.

We establish connections between \eqref{ri1} and the following problem appearing in the Free Material Design (FMD), see \cite{lewinski}, \cite{kocvara},
\begin{equation}\label{ri2}
 \inf\left\{\int_\Omega | p |:\ p \in L^1(\Omega;\bR^2),\ \di p = 0, \ p\cdot \nu|_\Gamma = g\right\},
\end{equation}
where $\nu$ is the outer normal to $\partial \Omega$, and $p\cdot \nu|_\Gamma$ is a properly defined trace operator. Since $u$ appearing in \eqref{ri1} is a scalar function as opposed to a vector field $p$ in \eqref{ri2}, we may say that \eqref{ri1} represents a dimensional reduction of \eqref{ri2}.

We note at this stage that a regularity assumption on $\partial \Omega$ must be made so that the trace is well-defined, see \cite{frid}.

In  Section \ref{Spe}, we show how to reduce \eqref{ri2} to \eqref{ri1} when $\Omega$ is convex. We discuss there the various definitions of traces and we show how the boundary data $f$ and $g$ are
related.

Once this is done, we address the following problems:\\
1) Boundary data may be specified on a set $\Gamma$ essentially smaller than $\d\Omega$, the rest of the boundary remains free. We will study this problem in Section \ref{sec:PartialBoundary} when $\Omega$ is strictly convex with $C^1$ boundary and $\Gamma$ is an arc. Our existence result, Theorem \ref{main}, is shown for continuous $f$ having one-sided limits at $\Gamma$ endpoints. Uniqueness and further properties are studied in Theorem \ref{main2}.\\
2)  Loads at the boundaries in (\ref{ri2}) may be concentrated at a few points and be zero elsewhere. This corresponds to piecewise constant $f$ in (\ref{ri1}). This is analyzed in Section \ref{s:e} also for strictly convex $C^1$ domains $\Omega$.\\
3) We relax the strict convexity condition in Section \ref{s:re}, and consider the basic case of $\Omega$ being a rectangle. In this case the data $f$ are continuous on $\d\Omega$. The main result is Theorem \ref{t:r}, which is applied to a special case in subsection \ref{sec:bFMD}.

The opening Section \ref{Spe} is devoted to  establishing a relationship between the two minimization problems. On the way, we have to discuss various notions of traces, which is particularly important when we want to compute the normal trace of  vector valued measures, see Definition \ref{def:normaltrace} and Remark \ref{rmk:measuretrace}. The basic properties of sets of finite perimeter are recalled in paragraph \ref{sub:per}.
 
The least gradient problem, \eqref{ri1}, with $\Gamma = \partial\Omega$, attracted attention of many authors, \cite{bomba}, \cite{nachman}, \cite{mazon}, \cite{mazon15}, \cite{miranda}, \cite{sternberg}, \cite{tamasan}, \cite{dosantos} who addressed \eqref{ri1}, as well as its anisotropic or non-homogeneous generalizations. 

We are particularly interested in the result by Sternberg {\it et al.}, see \cite{sternberg}, who showed existence and uniqueness of solutions of \eqref{ri1} when $\Gamma=\Omega$ and the boundary data $f$ are continuous. The authors of \cite{sternberg}
showed that solutions to the least gradient problem are less  smooth than the boundary data. 

The method of proof in \cite{sternberg} is constructive, and it is based on the fundamental result in \cite{bomba} showing that superlevel sets of solution to \eqref{ri1} are minimal surfaces.
In Section \ref{sec:PartialBoundary}, we extend the method  used in  \cite{sternberg}, to find solutions to \eqref{ri1} when $\Gamma \subsetneq \d\Omega$ by constructing explicitly their level sets, see paragraph \ref{sec:SWZ}.  We use this approach in Theorem \ref{main2} to study examples of boundary conditions for which we can  establish basic properties of solutions like uniqueness, continuity and presence of level set with positive Lebesgue measure. As opposed to the case when $\Gamma=\d \Omega$, solutions to \eqref{ri1} with continuous boundary data are not necessarily continuous in $\bar \Omega$. 

As we mentioned above, traces are an issue. Here, we will point to one of the aspects. It is well-known that the trace space of $BV(\Omega)$ is $L^1(\d\Omega)$. It is also well-known, see \cite{mazon}, that the functional defined in (\ref{ri1}) is not  lower semicontinuous on $\{u\in BV(\Omega):\ Tu = f\}$. This implies that the direct methods of the calculus of variations need not be best suited to solve \eqref{ri1}.
In \cite{mazon}, by a different method, the authors showed that a solution to 
\eqref{ri1} 
exists, provided that $f\in L^1(\partial\Omega)$. 
However, these solutions 
need not satisfy the boundary condition in the trace sense. This problem is highlighted in \cite{tamasan}, \cite{dosantos}, where the authors showed there that the space of traces of solutions to \eqref{ri1} is  smaller than $L^1(\d\Omega)$.  The second remark related to the construction in \cite{mazon} is the lack of uniqueness of solutions, even when $f$ has jump discontinuities in just a few points. These remarks show that we have to be very careful about the following:\\
a) solutions to \eqref{ri1} need not exist for the data in the space of traces of $BV$ functions;\\
b) uniqueness of solutions might be lost while relaxing continuity of the data.




In Section \ref{s:re}, we address the issue of  existence of solutions to \eqref{ri1}, when $\Gamma = \d\Omega$ and $\Omega$ is a rectangle, i.e. an example of  convex but not strictly convex set. For such sets the general theory in \cite{sternberg} fails. Our method of proof is based on approximation of rectangle $\Omega$ by strictly convex sets and on a stability result showed in \cite{miranda}. In Theorem \ref{t:r}, we show existence result provided that the boundary data are monotone on the sides of the set $\Omega$. 

\section{A  relationship between two minimization problems}\label{Spe}
In this section, we show relationship between solutions to the following problems,
\begin{equation}\label{eq:LG}
\min \left\{\int_{\Omega}|Du|:u\in BV(\Omega), T
u=f\right\}
\end{equation}
and
\begin{equation}\label{eq:FMD}
\inf \left\{\int_{\Omega}|p|: p\in L^1\left(\Omega;\R^2\right), \text{div } p=0,\, p\cdot \nu|_{\d\Omega} =g\right\}.
\end{equation}
The fist one,
\eqref{eq:LG}, is the well-known Least Gradient problem. For $\Omega$ with Lipschitz boundary, and $f\in L^1(\partial \Omega)$, the restriction $Tu$ at the boundary is understood in the sense of trace of $BV$ functions. 

The second problem, \eqref{eq:FMD}, appears in the Free Material Design, where the goal is to find the optimal material distribution of a body to support a load applied to its boundary, $p$ is a field in $L^1$ and  $\di p$ is understood in the distribution sense, see definition \ref{def:divergence}. For the normal trace $p\cdot \nu|_{\d\Omega}$ to be well defined, with $\nu$ the outer unit normal to $\partial \Omega$, the boundary of $\Omega$ should belong to a special class of Lipschitz domains called deformable Lipschitz see \cite[Definition 3.1]{frid2}. 
This class of sets  contains
convex domains, \cite[Remark 2.2]{frid}, which are studied in this paper.

We define $p\cdot \nu|_{\d\Omega}$ so that the Gauss-Green theorem is satisfied, see Definition \ref{def:normaltrace}. 
We introduce now the definition of the divergence of a field in $L^p$ , $1\leq p\leq \infty$, along with the notion of its normal trace. These definitions generalize to the case when the field is a Radon measure. 

\begin{definition}\label{def:divergence}\rm
A vector field $F\in L^{p}\left(\Omega;\R^n\right)$, $1\leq p\leq \infty$, or $F$ a vector valued Radon measure on $\Omega\subseteq \bR^n$ is called a {\it divergence measure field} if the distributional divergence $\di F$ is a Radon measure and its  total variation,
$$|\di F|(\Omega)=\sup \left\{\int_\Omega F\cdot \nabla \vfi:\vfi\in C_0^1(\Omega), 
\,||\vfi||_{L^\infty(\Omega)}\leq 1\right\},$$
is finite. 
\end{definition}

\begin{definition}\label{def:normaltrace}\rm
If $F\in L^p\left(\Omega;\R^n\right)$, $1\leq p\leq \infty$, is a divergence measure field, then the {\it normal trace}
$F\cdot \nu|_{\partial \Omega}$ is a continuous functional on $\Lip(\d \Omega)$, the space of Lipschitz continuous functions, defined
by the following formula
\begin{equation}\label{eq:Gauss}
\left \langle F \cdot \nu|_{\d\Omega},\varphi\right \rangle= \left \langle \text{div }F, \phi \right \rangle+\int_\Omega F(x)\cdot \nabla\phi (x)\,dx,
\end{equation}
where $\phi$ is the Whitney extension of $\varphi$ to $\Lip(\R^n)$ preserving the Lipschitz constant, see \cite[Section 2.10.43]{federer}.
By \cite[Theorem 3.1]{frid2}, the operator $F\cdot \nu|_\Omega$ is well defined as a continuous linear
functional on the space of Lipschitz continuous functions on $\d\Omega$ and it is independent of the choice of the extension $\phi$.
\end{definition}

\begin{remark}\label{rmk:measuretrace}\rm
In the case when $F$ is a vector valued measure whose distributional divergence is a measure, the normal trace, defined by \eqref{eq:Gauss}, is a continuous functional over the space of $\Lip(\gamma,\partial \Omega)$ with $\gamma>1$, \cite[Theorem 3.1]{frid2}. The space $\Lip(\gamma,C)$ can be understood as the space of Lipschitz function with bounded and continuous partial derivative of order not greater than $\gamma$, see \cite[Chapter VI, Section 2.3]{Stein} for a detailed description of these functions and their extensions.
\end{remark}

Since space $L^1$ is not weakly$^*$ closed, we do not expect the existence of minimizers of  \eqref{eq:FMD}. However, in this section, we establish a relation between the infimum of \eqref{eq:FMD} and solutions to \eqref{eq:LG}. This is described in the following theorem.

\begin{theorem}\label{thm:LGvsFMD} Let us assume $\Omega\subseteq \R^2$ is convex and $u$ is a solution to \eqref{eq:LG} with $Tu =f\in L^1(\d\Omega)$, then $g=\frac{\partial f}{\partial \tau}$ is a continuous functional over $\Lip(\d\Omega)$, and $q=R_{-\frac{\pi}{2}}D u$ is a solution to \eqref{eq:FMD} in the following sense 
\[ \inf \left\{\int_{\Omega}|p|: p\in L^1\left(\Omega;\R^2\right), \text{div } p=0,\, p\cdot \nu|_{\d\Omega} =g \text{ on }\partial \Omega\right\} = |q| (\Omega) \equiv |D u| (\Omega).
\]
Here, $R_\alpha$ denotes a rotation with angle $\alpha$ around the origin, and $\tau$ is the tangent such that $(\nu,\tau)$ is positively oriented.
Moreover, $q$ is a divergence measure field, with $\di q=0$, and $q\cdot\nu|_{\d\Omega}=g$ as defined in Remark \ref{rmk:measuretrace}.
\end{theorem}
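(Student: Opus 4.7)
The plan is to establish the four assertions in sequence, exploiting the fact that in two dimensions the rotation $R_{-\pi/2}$ canonically converts the gradient of a scalar $BV$ function into a divergence-free vector-valued measure. First I verify the easy structural facts about $q := R_{-\pi/2}Du$. Orthogonality of $R_{-\pi/2}$ gives $|q|(\Omega)=|Du|(\Omega)$ as Radon measures. Writing $q=(D_2 u,-D_1u)$ and testing against $\varphi\in C_c^\infty(\Omega)$ yields
\[
\int_\Omega \nabla\varphi\cdot dq \;=\; \int_\Omega \varphi_x\, d(D_2 u)-\int_\Omega\varphi_y\,d(D_1u) \;=\; -\int_\Omega u(\varphi_{xy}-\varphi_{yx})\,dx \;=\; 0,
\]
so $\di q=0$ distributionally and $q$ is a divergence-measure field in the sense of Definition \ref{def:divergence}.

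Next I compute the normal trace. Since $\di q=0$, formula \eqref{eq:Gauss} together with Remark \ref{rmk:measuretrace} reduces, for $\varphi\in\Lip(\gamma,\d\Omega)$ with Whitney extension $\phi$, to
\[
\langle q\cdot\nu|_{\d\Omega},\varphi\rangle \;=\; \int_\Omega\phi_x\,d(D_2u)-\int_\Omega\phi_y\,d(D_1u).
\]
Applying the $BV$ Gauss-Green formula to each component, the interior $u\phi_{xy}$ terms cancel, leaving
\[
\int_{\d\Omega}(\phi_x\nu_2-\phi_y\nu_1)\,f\,ds \;=\; -\int_{\d\Omega}f\,\partial_\tau\varphi\,ds,
\]
because $\tau=(-\nu_2,\nu_1)$ and the tangential derivative of $\phi$ on $\d\Omega$ depends only on its boundary values $\varphi$. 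This identifies the distribution $g$ defined by $\langle g,\varphi\rangle := -\int_{\d\Omega}f\,\partial_\tau\varphi\,ds$ with $q\cdot\nu|_{\d\Omega}$. The bound $|\langle g,\varphi\rangle|\le\|f\|_{L^1(\d\Omega)}\Lip(\varphi)$ shows that $g=\d f/\d\tau$ extends continuously to all of $\Lip(\d\Omega)$, which is the stated regularity of $g$.

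For the variational identity I argue both inequalities. For $\inf\le|q|(\Omega)$, I use the trace-preserving strict approximation of Anzellotti-Giaquinta to obtain $u_n\in W^{1,1}(\Omega)\cap C^\infty(\Omega)$ with $Tu_n=f$ and $\int_\Omega|\nabla u_n|\,dx\to|Du|(\Omega)$; applying the first two steps above to each $u_n$ shows that $p_n=R_{-\pi/2}\nabla u_n\in L^1$ is admissible in \eqref{eq:FMD}, so $\inf\le\lim\int_\Omega|p_n|\,dx=|Du|(\Omega)=|q|(\Omega)$. For the reverse inequality, let $p\in L^1(\Omega;\R^2)$ be admissible. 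Convexity and hence simple connectedness of $\Omega$ let me invoke the two-dimensional Poincar\'e lemma on the curl-free field $R_{\pi/2}p$ to produce $v\in W^{1,1}(\Omega)$ with $\nabla v=R_{\pi/2}p$, i.e.\ $p=R_{-\pi/2}\nabla v$. The boundary condition $p\cdot\nu|_{\d\Omega}=g$ translates through the same Gauss-Green calculation to $\d_\tau(Tv-f)=0$ distributionally on the connected curve $\d\Omega$, forcing $Tv=f+c$ for some constant $c$. Hence $v-c$ is admissible in \eqref{eq:LG}, and minimality of $u$ gives $\int_\Omega|p|\,dx=\int_\Omega|\nabla v|\,dx\ge|Du|(\Omega)$.

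The step I expect to be most delicate is the boundary analysis: reconciling the $BV$ trace of $u$ with the normal trace of the vector measure $q$ at the low regularity $f\in L^1$, and extracting the scalar boundary value $Tv$ from the normal trace of an arbitrary admissible $p$. The two-dimensional div-curl duality on simply connected domains and the trace-preserving $BV$-to-$W^{1,1}$ approximation are the key tools that make the argument rigorous; everything else is bookkeeping with Gauss-Green.
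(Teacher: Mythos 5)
Your proposal is correct and follows essentially the same route as the paper: both directions of the variational identity are obtained by converting admissible divergence-free $L^1$ fields into $W^{1,1}$ competitors for \eqref{eq:LG} via the planar Poincar\'e lemma on the convex domain (this is exactly the content of the paper's Proposition \ref{pr1}) and, conversely, by pushing a trace-preserving strict smooth approximation of $u$ through the rotation $R_{-\pi/2}$, with the boundary identification $q\cdot\nu|_{\d\Omega}=\frac{\d f}{\d\tau}$ coming from Gauss--Green. The only cosmetic difference is that you integrate by parts directly against $\phi_x,\phi_y$ --- which for $\phi\in\Lip(\gamma,\R^2)$ with $1<\gamma<2$ requires a preliminary mollification of $\phi$, since $\phi_{xy}$ need not exist --- whereas the paper routes the trace computation through the smooth approximating sequence $w_n$ and weak-$*$ convergence of $\nabla w_n$ to $Du$.
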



In order to prove this theorem, we need the following result.

%
%

\begin{proposition}\label{pr1}
Suppose $p\in L^1(\Omega; \bR^2)$, 
 $\di p =0$ in the sense of distributions and $\Omega$ is convex, then there exists $u$, an element of $W^{1,1}(\Omega)$ such that $p=R_{-\frac{\pi}{2}} \nabla u$, more precisely,
  $$
 \int_\Omega |p| \,dx = \int_\Omega |\nabla u | \,dx.
 $$
 Moreover, if $p\cdot\nu|_{\d\Omega} = g$ in the sense of Definition \ref{def:normaltrace} and $Tu =f$, then $g=\frac{\d f}{\d\tau}$.
\end{proposition}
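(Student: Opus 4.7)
The plan is to construct $u$ from $p$ via the Poincar\'e lemma adapted to $L^1$ fields, and then identify the normal trace $p\cdot\nu|_{\d\Omega}$ with the distributional tangential derivative of $f=Tu$ through the Gauss--Green formula.

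For the first step, because $p$ is only in $L^1$, I would regularize. Set $p_\varepsilon=p*\eta_\varepsilon$ on $\Omega_\varepsilon=\{x\in\Omega:\dist(x,\d\Omega)>\varepsilon\}$ with a standard mollifier $\eta_\varepsilon$; the regularized field is smooth and divergence-free. Convexity of $\Omega$ ensures every $\Omega_\varepsilon$ is simply connected, so the classical Poincar\'e lemma yields a smooth $u_\varepsilon$, unique up to a constant, with $\nabla u_\varepsilon=R_{\pi/2}p_\varepsilon$, given concretely by
\[
u_\varepsilon(x)=\int_{x_0}^{x}\bigl(-p_\varepsilon^{2}\,dy_1+p_\varepsilon^{1}\,dy_2\bigr).
\]
Normalizing by $\int_{B_0}u_\varepsilon=0$ for a fixed ball $B_0\Subset\Omega$, the $L^1(\Omega)$-convergence $p_\varepsilon\to p$ together with the Poincar\'e inequality on $\Omega$ yields a limit $u\in W^{1,1}(\Omega)$ with $\nabla u = R_{\pi/2}p$; equivalently $p=R_{-\pi/2}\nabla u$. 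Since $R_{\pi/2}$ is an isometry, $|\nabla u|=|p|$ pointwise a.e., so $\int_\Omega|\nabla u|\,dx=\int_\Omega|p|\,dx$.

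For the boundary identification, fix $\varphi\in\Lip(\d\Omega)$ with Whitney extension $\phi\in\Lip(\R^2)$. Definition~\ref{def:normaltrace}, combined with $\di p=0$ and $R_{-\pi/2}^{T}=R_{\pi/2}$, gives
\[
\langle p\cdot\nu|_{\d\Omega},\varphi\rangle=\int_\Omega p\cdot\nabla\phi\,dx=\int_\Omega \nabla u\cdot(R_{\pi/2}\nabla\phi)\,dx.
\]
The vector field $w:=R_{\pi/2}\nabla\phi=(-\phi_{y},\phi_{x})$ is in $L^\infty$ and distributionally divergence-free. Applying the Gauss--Green formula for the $W^{1,1}$ function $u$ paired with the bounded divergence-free field $w$ (justified by mollifying $\phi$ and passing to the limit), I obtain
\[
\int_\Omega \nabla u\cdot w\,dx=\int_{\d\Omega} f\,(w\cdot\nu)\,d\cH^1.
\]
Because $(\nu,\tau)$ is positively oriented, $\tau=R_{\pi/2}\nu$ and hence $w\cdot\nu=\nabla\phi\cdot R_{-\pi/2}\nu=-\nabla\phi\cdot\tau=-\d\phi/\d\tau$, which on $\d\Omega$ coincides with $-\d\varphi/\d\tau$ (a.e.\ with respect to $\cH^1$). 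Therefore
\[
\langle p\cdot\nu|_{\d\Omega},\varphi\rangle=-\int_{\d\Omega}f\,\frac{\d\varphi}{\d\tau}\,d\cH^1=\Bigl\langle\frac{\d f}{\d\tau},\varphi\Bigr\rangle,
\]
and continuity on $\Lip(\d\Omega)$ is immediate from $|\langle g,\varphi\rangle|\le\|f\|_{L^1(\d\Omega)}\Lip(\varphi)$.

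The main obstacle I expect is the passage from local to global in Step~1, namely ensuring that the normalized potentials $u_\varepsilon$ converge in $W^{1,1}(\Omega)$ and that the limit has a well-defined trace; this relies on convexity of $\Omega$ through Poincar\'e-type estimates that are uniform in $\varepsilon$. The boundary identification is then essentially an algebraic manipulation with the rotations together with a standard Gauss--Green formula for $W^{1,1}$ functions paired with bounded divergence-free fields.
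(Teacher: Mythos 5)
Your argument follows essentially the same route as the paper: mollify $p$, build the potential by integrating the rotated field along segments from a fixed base point (convexity guaranteeing these stay in $\Omega$), pass to the $W^{1,1}$ limit, and identify $p\cdot\nu|_{\d\Omega}$ with $\frac{\d f}{\d\tau}$ by the same rotation algebra and a boundary integration by parts. The only (fixable) soft spot is your justification of the Gauss--Green identity by mollifying $\phi$ --- convergence of $\nabla\phi_\delta\cdot\nu$ $\cH^1$-a.e.\ on $\d\Omega$ is not automatic --- whereas the paper keeps $\varphi$ fixed and uses instead that the traces $Tu^{\varepsilon}\to Tu$ converge in $L^1(\d\Omega)$, which you could equally do with your $u_\varepsilon$.
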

\proof We fix $x_0\in \Omega$, and define the differential form $\omega = p_1 dx_2 - p_2 dx_1$, where $p=(p_1,p_2)$. We set $p_\ep = p\ast\varphi_\ep:=(p_1^\ep,p_2^\ep)$, $\ep>0$, with $\varphi_\varepsilon$ a sequence of mollifiers. We define a sequence of mollified differential forms
$\omega^\ep = p^\ep_1 dx_2 - p^\ep_2 dx_1$ and the functions
$$
u^\ep(x) = \int_{\gamma_x} \omega^\ep .
$$
with $\gamma_x$ a line segment joining $x_0$ to $x$. Since $p_\varepsilon\to p$ in $L^1(\Omega;\R^2)$, then we shall see that $u^\ep$ converges in $W^{1,1}(\Omega)$.
In fact,
\begin{eqnarray*}
\int_\Omega |u^\ep(x)-u^\delta(x)|dx=\int_\Omega\int_0^{ |x-x_0|}
\left| (p^\ep(\gamma_x(t)) - p^\delta(\gamma_x(t))) \cdot \eta \right| |\dot\gamma| \,dtdx :=\cJ,
\end{eqnarray*}
where $\eta$ is a unit normal to the interval $\gamma_x$. We take $\sigma$ to be tangent to $\gamma_x$. We switch to another orthogonal coordinate system, such that the first axis is parallel to $\sigma$, while the second one is parallel to $\eta$. Then, by Fubini Theorem, we change the order of integration and we obtain,
\begin{eqnarray*}
&\cJ &\le \int_{\bR} \int_{ \bR }\int_{ \bR}
\left| 
(p^\ep(\gamma_x(t)) - p^\delta(\gamma_x(t)))\right| \chi_\Omega(\sigma x_1 +\eta x_2)\,dt dx_2dx_1\\
&&\le \diam(\Omega) \| p^\ep - p^\delta\|_{L^1(\Omega)}.
\end{eqnarray*}
We conclude that $u^\epsilon$ is a Cauchy sequence in $L^1$.

We
notice that, 
$\nabla u_\ep$ is a Cauchy sequence in $L^1$ too. Indeed, since $\di p=0$, then $\di p^\ep=0$, and $\nabla u^\ep=R_{\frac\pi2}p^\ep$, therefore
$$
\| \nabla u_\ep - \nabla u_\de \|_{L^1(\Omega)} \le \| R_{\frac\pi2}(p^\ep - p^\de) \|_{L^1(\Omega)} 
= \| p^\ep - p^\de \|_{L^1(\Omega)} .
$$
We denote by $u$ the $W^{1,1}$-limit of $u^\ep$. Since $p^\ep\to p$ in $L^1$, then $\nabla u=R_{\frac\pi2}p$ and 
$$
\int_\Omega |\nabla u| \,dx = \int_\Omega |R_{\frac{\pi}{2}}p| \,dx=\int_\Omega |p| \,dx
$$
Finally, we study the traces. If $\phi \in \Lip(\R^2)$, and $\vfi$ is its restriction on $\d\Omega$, then
\begin{eqnarray*}
\langle g,\vfi\rangle &= & \langle p\cdot \nu, \vfi \rangle = \int_\Omega p \cdot \nabla\phi \,dx = 
\lim_{\ep\to 0} \int_\Omega p^\epsilon \cdot \nabla\phi \,dx =
\lim_{\ep\to 0} \int_\Omega R_{-\frac{\pi}{2}} \nabla u^\epsilon\cdot \nabla\phi \,dx\\
&= & \lim_{\ep\to 0} \int_{\d\Omega} R_{-\frac{\pi}{2}} \nabla u^\epsilon\cdot \nu\, \vfi\,d\cH^1.
\end{eqnarray*}
Due to smoothness of $u^\epsilon$ we have 
$R_{-\frac{\pi}{2}} \nabla u^\epsilon\cdot \nu = \frac{\d u^\epsilon}{\d \tau}$. Hence,
\begin{equation*}
 \langle g,\vfi\rangle = \lim_{\ep\to 0} \int_{\d\Omega} R_{-\frac{\pi}{2}} \nabla u^\epsilon\cdot \nu \,\vfi\,d\cH^1
 = \lim_{\ep\to 0} \int_{\d\Omega} \frac{\d u^\epsilon}{\d \tau} \vfi\,d\cH^1 =
 -\lim_{\ep\to 0} \int_{\d\Omega} u^\epsilon \frac{\d\vfi}{\d\tau}\,d\cH^1. 
\end{equation*}
Since $u^\epsilon$ converges to $u$ in $W^{1,1}$, then the traces converge in $L^1(\d\Omega)$, $T u^\epsilon \to Tu$.
Thus,
\begin{equation*}
 \langle g,\vfi\rangle =- \int_{\d\Omega} Tu \frac{\d\vfi}{\d\tau}\,\,d\cH^1 =- \int_{\d\Omega} f \dfrac{\d\vfi}{\d\tau}\,\,d\cH^1
 =\langle\dfrac{\d f}{\d\tau}, \vfi\rangle . \eqno\Box
\end{equation*}



\begin{remark}\rm The above proof fails when $p$ is a measure since  the sequence of smooth functions $u^\ep$ will not, in general, converge in $BV$.  
\end{remark}

\bigskip \textit{\textbf{Proof of Theorem \ref{thm:LGvsFMD}.}} Let us suppose that
$u$ is a solution to \eqref{eq:LG} and $P$ is the value of the infimum of \eqref{eq:FMD}. 
We have to show that $q:= R_{-\frac{\pi}{2}} D u$ satisfies $\int_\Omega |q| = P$ and $q\cdot \nu|_{\d\Omega} =\frac{\partial f}{\partial \tau}=g$.

Let us suppose that $p_n\in L^1(\Omega)$ is a minimizing
sequence, i.e. 
$$
\int_\Omega |p_n|\,dx \to P,\qquad \di p_n=0,\qquad p_n\cdot\nu|_{\d\Omega} =g.
$$
By Proposition \ref{pr1}, we can find a sequence $v_n\in W^{1,1}(\Omega)$, such that 
$$\int_\Omega |p_n|\,dx =\int_\Omega |\nabla v_n|\,dx.
$$ 
Moreover, we may require $Tv_n =f$.

We conclude that
$$
P=\lim_{n\to \infty} \int_\Omega |p_n|=\lim_{n\to \infty} \int_\Omega |\nabla v_n|\geq \int_\Omega |D u|=\int_\Omega |q|.
$$
Since $p_n$ is a minimizing sequence we infer that $P$ cannot be bigger than $\int_\Omega |D u|$.

Notice that $\di q=0$. In order to show that $q$ satisfies the desired boundary conditions it suffices to verify identity \eqref{eq:Gauss}. We take a sequence of smooth functions $w_n$ in $BV(\Omega)$ converging to $u$ in a strict sense, i.e., $w_n \to u$ in $L^1(\Omega)$ and $\int_\Omega |\nabla w_n|\,dx \to \int_\Omega |D u|$ and we require that $Tw_n = f= Tu$. Then, there is a subsequence (not relabeled) such that $\nabla w_n$ converges to $D u$ weakly as measures. Hence, if we set $\hat p_n = R_{-\frac{\pi}{2}}\nabla w_n$, then by Proposition \ref{pr1}, we have
$$
g_n =\hat  p_n\cdot\nu|_{\d\Omega} = \frac{\d T w_n}{\d\tau}.
$$
We know that $T w_n = f$, then we have
$$
g_n =  \frac{\d f}{\d\tau}.
$$
Since $\nabla w_n  \overset{\ast}{\rightharpoonup} Du$, then $\hat p_n  \stackrel{\ast}{\rightharpoonup} q$. 
If $\phi\in\Lip(\gamma, \R^2)$ with $\gamma>1$, then $\phi$ has continuous partial derivatives of order $1$ and hence,
$$
\int_\Omega \nabla \phi\,dq = \lim_{n\to\infty} \int_\Omega \nabla \,\phi \hat p_n\,dx.
$$
Subsequently, by \eqref{eq:Gauss} and $\di q=0$, we have
\begin{equation}\label{r:gr}
 \langle q\cdot\nu|_{\d\Omega}, \vfi\rangle = \int_\Omega \nabla\phi \,dq = \lim_{n\to\infty} \int_\Omega \nabla \phi \,\hat p_n\,dx = \lim_{n\to\infty}
 \langle \hat p_n\cdot \nu|_{\d\Omega},\varphi\rangle = 
 \langle g, \varphi\rangle ,
\end{equation}
where $\varphi$ is the restriction of $\phi$ to $\partial \Omega$.
Hence, $q$ satisfies the desired boundary conditions. 
\qed

\subsection{Case with load on $\Gamma\subsetneq \d \Omega$} In this section, we consider the case when the load is applied only on one part of the boundary. In other words, we assume that $\Gamma\subsetneq \Omega$ is an open set, and consider the following two minimizing problems,
\begin{equation}\label{eq:partialLG}
\min\left\{\int_\Omega |Du|:u\in BV(\Omega), T_\Gamma u =f\right\}
\end{equation}
and
\begin{equation}\label{eq:partialFMD}
\inf\left\{\int_\Omega |p|:p\in L^1(\Omega, \bR^2),\di p=0, p\cdot \nu|_\Gamma=g\right\}.
\end{equation}
We have to define the trace $p\cdot \nu$ on $\Gamma$. It is convenient to consider the general framework of Definitions \ref{def:divergence} and \ref{def:normaltrace}. We notice that $\Lip_\Gamma(\d\Omega) =\{ \varphi\in \Lip(\d\Omega):\ \varphi|_{\d\Omega \setminus \Gamma} = 0\}$ is a closed subspace of $\Lip(\d\Omega)$. We then define $p\cdot \nu|_\Gamma$ as a restriction of $p\cdot \nu|_{\d\Omega}$ to $\Lip_\Gamma(\d \Omega).$
Also, $T_\Gamma u$ is the restriction of $Tu$ on the set $\Gamma$.

Therefore, similarly as in Theorem \ref{thm:LGvsFMD} we obtain, the following result,
\begin{theorem}\label{thm:PartialLGvsFMD}
Assume $\Omega\subseteq \R^2$ is convex, $\Gamma\subsetneq \Omega$ is relatively open, and $u$ is a solution to \eqref{eq:partialLG} with $f\in L^1(\d\Omega)$, then $q=R_{-\frac{\pi}{2}}D u$ satisfies
$$
\inf\left\{\int_\Omega |p|:p\in L^1(\Omega),\di p=0, p\cdot \nu|_\Gamma=g\right\} = \int_\Omega \,|q| =
\int_\Omega |Du|,
$$
where $g=\frac{\partial f}{\partial \tau}$. Moreover,  $\di q=0$, $q\cdot \nu|_{\Gamma} =g$ as functional on $\Lip_{\Gamma}(\partial \Omega,\gamma)$,
with $\gamma>1$.
\end{theorem}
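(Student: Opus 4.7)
The strategy is to mirror the proof of Theorem \ref{thm:LGvsFMD}, but taking test functions from $\Lip_\Gamma(\partial\Omega)$ rather than all of $\Lip(\partial\Omega)$. Let $P$ denote the infimum in \eqref{eq:partialFMD} and set $q := R_{-\pi/2}Du$.

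For the bound $P \geq \int_\Omega |Du|$, I would take a minimizing sequence $p_n \in L^1(\Omega;\R^2)$ for \eqref{eq:partialFMD}. Proposition \ref{pr1} yields $v_n \in W^{1,1}(\Omega)$ with $R_{-\pi/2}\nabla v_n = p_n$, $\int_\Omega|\nabla v_n|\,dx = \int_\Omega|p_n|\,dx$, and $\partial(T v_n)/\partial\tau = p_n\cdot\nu$ on the whole of $\partial\Omega$. Restricting this identity to $\Lip_\Gamma(\partial\Omega)$ and using the constraint $p_n\cdot\nu|_\Gamma = g = \partial f/\partial\tau$, we obtain $\partial_\tau(T v_n - f) = 0$ on $\Gamma$, so that $T_\Gamma v_n = f$ up to an additive constant on each connected component of $\Gamma$. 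These constants can be absorbed into $v_n$ without changing $\int|\nabla v_n|$, so the (adjusted) $v_n$ is admissible in \eqref{eq:partialLG}. The minimality of $u$ then gives $\int_\Omega|\nabla v_n|\,dx \geq \int_\Omega|Du|$, and $n\to\infty$ yields $P \geq \int_\Omega|q|$.

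For the reverse inequality together with the boundary identity, I would approximate $u$ strictly in $BV$ by smooth functions $w_n$ with $T w_n = f$; then $\hat p_n := R_{-\pi/2}\nabla w_n$ satisfies $\di\hat p_n = 0$ and, by Proposition \ref{pr1}, $\hat p_n\cdot\nu|_{\partial\Omega} = \partial f/\partial\tau$, so in particular $\hat p_n\cdot\nu|_\Gamma = g$. Hence $\hat p_n$ is admissible in \eqref{eq:partialFMD}, giving $P \leq \int_\Omega|\nabla w_n|\,dx \to \int_\Omega|Du|$, and equality follows. The strict convergence also produces $\hat p_n \overset{*}{\rightharpoonup} q$ as measures, so for $\phi\in\Lip(\gamma,\R^2)$ with $\gamma>1$ whose restriction $\varphi := \phi|_{\partial\Omega}$ lies in $\Lip_\Gamma(\partial\Omega,\gamma)$, the computation \eqref{r:gr} passes to the limit verbatim and yields $\langle q\cdot\nu|_\Gamma,\varphi\rangle = \langle g,\varphi\rangle$, which is the required boundary identity.

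The main obstacle is the trace-adjustment step in the first inequality: showing that the additive constants on the different components of $\Gamma$ can be absorbed into $v_n$ without altering $\nabla v_n$. When $\Gamma$ is a single arc (the setting of interest in later sections) a single global constant suffices and there is nothing to do. For a disconnected $\Gamma$ one cannot, in general, add distinct constants on distinct components of $\Gamma$ without changing the interior of $v_n$, so an additional argument — for instance, modifying the minimizing sequence component by component using test functions supported on one arc at a time, or exhausting $\Gamma$ by connected pieces — is required. All other parts are routine adaptations of the full-boundary argument.
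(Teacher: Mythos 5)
Your proposal is correct and is essentially the argument the paper intends: the paper supplies no separate proof, merely asserting the result follows ``similarly as in Theorem \ref{thm:LGvsFMD},'' and your two inequalities plus the weak-$*$ trace computation are exactly that adaptation. Your caveat about disconnected $\Gamma$ is a genuine point the paper glosses over --- since every $\varphi\in\Lip_\Gamma(\d\Omega)$ vanishes at the endpoints of each component of $\Gamma$, the datum $g=\frac{\d f}{\d\tau}$ determines $f$ only up to an additive constant on each component, so the lower bound $P\ge\int_\Omega|Du|$ (and arguably the statement itself) really requires $\Gamma$ connected, which is the only case used later in the paper.
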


\section{Analysis of the least gradient problem, when $\Gamma\subsetneq\partial\Omega$}\label{sec:PartialBoundary}
In this section, we study solutions of the least gradient problem, when the trace is defined only on a part of the boundary. More precisely, we prove existence of minimizers of \eqref{eq:partialLG} when $\Gamma$ is open and $f$  is continuous in $ \Gamma$. 
Before embarking on the analysis of \eqref{eq:partialLG}, we point out the fact that in the case when the trace is defined over the whole boundary, the question of existence of solutions was studied in the literature from different perspectives. One could try to apply the direct method of the Calculus of Variation to the relaxed functional $\int_\Om |Du|$ to establish existence of minimizers, \cite{mazon15}. However, in this paper, we adapt the method introduced by \cite{sternberg} since it gives use insights about the structure of the solutions. This method requires modification and even with continuous data, uniqueness and continuity of solution might fail. 

\subsection{Sets of finite perimeter}\label{sub:per}
Before showing the existence of minimizers of \eqref{eq:partialLG}, 
we recall notions, frequently used in this paper, related to sets of finite perimeter and to the generalized definition of its boundary.
\begin{definition}\label{defi:finiteperimeter}
For $E\subseteq \Omega$ measurable, we say $E$ is a set of finite perimeter in $\Omega$ if and only if $\chi_E\in BV(\Omega)$. In this case, we define the perimeter of $P(E,\Omega)$ of the set $E$ as the total variation of the Radon measure $D\chi_E$.
\end{definition}

\begin{definition}\label{defi:boundaries}
Suppose $E$ is a set of finite perimeter.
\begin{itemize}
\item For $x\in \R^n$, the measure theoretic exterior normal $\nu(x,E)$ at $x$ is a unit vector $\nu$ such that
$$
\lim_{r\to 0} \dfrac{|B(x,r)\cap\{y:(y-x)\cdot \nu <0, y\notin E\}|}{r^n}=0,
$$
and
$$
\lim_{r\to 0} \dfrac{|B(x,r)\cap\{y:(y-x)\cdot \nu >0, y\in E\}|}{r^n}=0.
$$
(We restrict our attention to the case $n=2$.) 
\item The reduced boundary $\partial^* E$ is the set of points $x$ such that $\nu(x,E)$ exists.
\item The measure theoretic boundary $\partial_M E$ is the set of points $x\in R^n$ such that
$$\limsup_{r\to 0} \dfrac{|E\cap B(x,r)|}{|B(x,r)|}>0\qquad \text{and}\qquad \liminf_{r\to 0} \dfrac{|E\cap B(x,r)|}{|B(x,r)|}<1.$$
\end{itemize}
We have that $\partial^*E\subseteq \partial_M E\subseteq \partial E$, where $\partial E$ is a topological boundary of $E$. It is a well-known fact that $\mathcal H^1(\partial_M E\setminus \partial ^* E)=0$, and that
\begin{equation}\label{eq:important}
P(E,\Omega)=\mathcal H^1(\Omega \cap \partial_M E)=\mathcal H^1(\Omega\cap \partial^* E).
\end{equation}
\end{definition}

Since sets of finite perimeter are defined up to measure zero, then in order to avoid ambiguity, we will use the following convention,
\begin{equation}\label{e:des}
 x\in E \iff \limsup_{r\to 0} \dfrac{|E\cap B(x,r)|}{|B(x,r)|}>0.
\end{equation}

\subsection{Sternberg-Williams-Ziemer construction}\label{sec:SWZ}
We present here a version of a construction presented in \cite{sternberg}. We adapt it to deal with the case of the boundary condition on $\Gamma$. The justification of the method is based on the observation made in \cite{bomba}, which is recalled in Lemma \ref{bombieri} below. Before we state it, we recall the definition of functions of least gradients.

\begin{definition}\rm 
We say $u\in BV(\Omega)$ is a {\it function of least gradient} if 
$$\int_\Omega|Du|\leq \int_\Omega |D(u+w)|$$
for every compactly supported function $w\in BV(\Omega)$ with $\supp w\subseteq \Omega$.
\end{definition}

\begin{lemma}\label{bombieri}
 If $u$ is a solution to \eqref{eq:partialLG},
 then $\partial\{ u\ge t\}$ is a minimal surface for every $t\in \bR$.
\end{lemma}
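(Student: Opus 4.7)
The plan is to reduce the assertion to the classical Bombieri--De Giorgi--Giusti statement (Lemma 1 in \cite{bomba}) that the superlevel sets of a function of least gradient have boundaries which are area minimizers in $\Omega$. The first step therefore is to verify that any solution $u$ of \eqref{eq:partialLG} is, in fact, a function of least gradient in the sense of the definition recalled just above the lemma. This is almost immediate: if $w\in BV(\Omega)$ has compact support inside $\Omega$, then $w$ vanishes in a neighborhood of $\partial\Omega$, so $T(u+w)=Tu=f$ on the whole of $\partial\Omega$ (hence \emph{a fortiori} on $\Gamma$). Consequently $u+w$ is admissible in \eqref{eq:partialLG} and the minimality of $u$ yields $\int_\Omega |Du|\le \int_\Omega |D(u+w)|$.

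Once $u$ is known to be of least gradient, I would invoke \cite{bomba} directly to conclude that $\partial\{u\ge t\}$ is a minimal surface (in the sense of Caccioppoli, i.e.\ $\mathcal H^1$-area minimizing among compactly supported perturbations in $\Omega$) for every $t\in\R$. For the reader's convenience I would sketch the argument: by the coarea formula
\begin{equation*}
\int_\Omega |Du|=\int_{-\infty}^{\infty}P(\{u\ge s\},\Omega)\,ds,
\end{equation*}
so if some level $E_t:=\{u\ge t\}$ admitted a strictly better competitor $F$ with $F\triangle E_t\Subset \Omega$ and $P(F,\Omega)\le P(E_t,\Omega)-\delta$, one builds a competitor $\tilde u$ by modifying $u$ on a slab $s\in(t-\eta,t+\eta)$, replacing $E_s$ by $F\cup E_{s}^{\text{small}}$ in a way that preserves monotonicity of the level sets (this is the standard Sternberg--Williams--Ziemer/Bombieri construction) and equals $u$ outside a compact subset of $\Omega$. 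A second application of coarea to $\tilde u$, using $P(\{\tilde u\ge s\},\Omega)\le P(\{u\ge s\},\Omega)-\delta$ for $s$ in a set of positive measure near $t$, contradicts the least gradient property of $u$.

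The only genuinely delicate ingredient is the level-set-preserving construction of $\tilde u$ referenced in the previous paragraph; everything else (admissibility in \eqref{eq:partialLG}, the use of coarea, and the contradiction with least gradient) is bookkeeping. Since this construction is carried out in detail in \cite{bomba} and has been used in the earlier works \cite{sternberg, mazon}, and since nothing in it interacts with the shape of $\Gamma$, the same argument applies verbatim in our setting. In short, my proof would consist of two sentences establishing that minimizers of \eqref{eq:partialLG} are functions of least gradient, followed by a citation of \cite{bomba} for the minimal surface property of their superlevel sets.
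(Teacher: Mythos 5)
Your proposal is correct and follows essentially the same route as the paper: observe that a compactly supported perturbation $w$ leaves the trace on $\Gamma$ unchanged, so admissibility of $u+w$ in \eqref{eq:partialLG} shows $u$ is a function of least gradient, and then invoke \cite{bomba} for the minimal surface property of $\partial\{u\ge t\}$. The extra sketch you give of the coarea/competitor argument inside \cite{bomba} is not needed (the paper simply cites the result), but it does no harm.
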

\proof
Indeed, for any $v$ in $BV$ with $T_{\Gamma}v=f$ we have
\begin{equation}\label{eq:LGinequality}
\int_\Omega |Du| \le\int_\Omega |Dv|.
\end{equation}
Let $K\subseteq\Omega$ compact and $w\in BV(\Omega)$ with $\supp \,w=K$, then 
$
u|_{\Gamma} =(u+w)|_{\Gamma}=f.
$
Then, applying \eqref{eq:LGinequality} to $v=u+w$ we get that
$$
\int_\Omega |Du|\leq \int_\Omega |D(u+w)|.
$$
Thus, we have shown that $u$ is a function of least gradient, we can then apply the results by \cite{bomba} to conclude minimality of  $\partial\{ u\ge t\}$. \qed

Now, we are ready to state of the main results of this paper:

\begin{theorem}\label{main}
 Let us suppose that $\Omega$ is a strictly convex region in the plane with Lipschitz continuous boundary.
 $\Gamma\subsetneq\d\Omega$ is an open subset of $\d\Omega$ and $\Upsilon= \d\Omega\setminus\Gamma$ is a smooth arc with endpoints  $a$ and $b$. We assume that $f:\Gamma\to \bR$ is a bounded continuous function having one-sided limits at $a$ and $b$, denoted by $f(a)$ and $f(b)$, respectively. Then, there exists $u\in C(\Omega\cup\Gamma)$, a solution to \eqref{eq:partialLG}, i.e.,
\[
  \min\left\{ \int_\Om|D u |: \ u\in BV(\Omega), T_{\Gamma}u =f\right\},
\]
 where $T_\Gamma u =f$  is interpreted in the sense of the trace of BV functions.
\end{theorem}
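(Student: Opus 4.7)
The plan is to adapt the Sternberg--Williams--Ziemer level-set construction from \cite{sternberg} to handle partial-boundary data. By Lemma \ref{bombieri} any minimizer has superlevel sets with minimal boundary in $\Omega$; since $\Omega$ is strictly convex and two-dimensional, these boundaries are unions of chords (line segments with endpoints on $\d\Omega$). So rather than attack \eqref{eq:partialLG} directly, I would first construct the candidate superlevel sets $E_t$ and then assemble $u$ from them by $u(x)=\sup\{t:x\in E_t\}$.

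For each $t$ in the range of $f$, the one-sided continuity of $f$ makes $F_t^+=\{x\in\G:f(x)\ge t\}$ and $F_t^-=\{x\in\G:f(x)\le t\}$ relatively closed in $\G$. I would select $E_t$ by minimizing the perimeter
\[
 P(E,\R^2)\quad\text{subject to}\quad F_t^+\subseteq \overline{E}\cap\G,\ F_t^-\subseteq \overline{\m\setminus E}\cap\G,
\]
among subsets $E\subseteq\m$; existence follows from BV compactness. The minimality of $\p E_t\cap\Omega$ and strict convexity force $\p E_t\cap\Omega$ to be a finite disjoint union of chords (finite because away from the values $f(a),f(b)$ only finitely many sign changes of $f-t$ occur on $\G$, which is where chord endpoints on $\G$ must lie; the remaining chord endpoints land on $\Upsilon$).

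The crucial monotonicity step is to show that for $s<t$ the sets can be chosen with $E_t\subseteq E_s$. This follows from a standard crossing/exchange argument: if a chord of $\p E_t$ and a chord of $\p E_s$ cross transversally in $\Omega$, swapping the two outer pieces produces admissible competitors for both levels whose combined perimeter is strictly smaller, contradicting joint minimality. With nesting in hand for a countable dense set of $t$, define $u(x)=\sup\{t\in\mathbb{Q}:x\in E_t\}$ and set $\{u\ge t\}=\bigcap_{s<t}E_s$. The coarea formula yields $u\in BV(\Omega)$ with $\int_\Omega|Du|=\int_\R P(\{u\ge t\},\Omega)\,dt$, and minimality follows because any competitor $v$ with $T_\G v=f$ has superlevel sets that are admissible for each level-set problem, so $P(\{v\ge t\},\Omega)\ge P(E_t,\Omega)$ for a.e.\ $t$.

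The main obstacle, and the place where the partial-boundary setting genuinely departs from \cite{sternberg}, is continuity of $u$ up to $\G$ together with the verification $T_\G u=f$. Away from the two endpoints $a,b$ of $\Upsilon$, continuity at $x_0\in\G$ with $f(x_0)=c$ follows from strict convexity: for $t$ slightly above $c$ the chord of $\p E_t$ separating $x_0$ from $F_t^+$ must pass arbitrarily close to $x_0$ (otherwise sliding the chord toward $\G$ strictly reduces its length, contradicting minimality), and similarly from below; this forces $u(x)\to f(x_0)$ as $x\to x_0$. The delicate region is near $a$ and $b$, where chords of different levels can accumulate on $\Upsilon$; here the one-sided limits $f(a),f(b)$ are used to pin down the limiting geometry of the chord endpoints on $\Upsilon$ and to keep the construction coherent. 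This is also why the conclusion only asserts $u\in C(\Omega\cup\G)$ and not $u\in C(\m)$: continuity across $a,b$ generally fails and is not claimed.
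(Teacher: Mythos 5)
Your overall strategy is the paper's: build candidate superlevel sets by solving a family of constrained perimeter problems, nest them, assemble $u$ by a sup formula, and conclude via the coarea formula. But there are two genuine gaps, both located exactly where the partial-boundary/trace issues bite.

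First, the existence of minimizers for your level-set problems does not ``follow from BV compactness.'' Your admissibility constraint is topological ($F_t^+\subseteq \overline{E}\cap\Gamma$, $F_t^-\subseteq\overline{\overline{\Omega}\setminus E}\cap\Gamma$), and such constraints are not closed under $L^1$ convergence of $\chi_{E^n}$: a minimizing sequence can detach from $\Gamma$ in the limit (e.g.\ slivers hugging an arc of $\{f\ge t\}$ whose perimeters converge but whose limit is empty). This is precisely the lower-semicontinuity failure for trace-constrained problems that the paper flags in the introduction. The paper circumvents it by extending $f$ to a function $F$ on a larger region $\Omega_o$ with $\partial\Omega_o\cap\partial\Omega=\Upsilon$, and minimizing $P(E,\Omega_o)$ subject to the constraint $E\setminus\overline{\Omega}=\{F\ge t\}\setminus\overline{\Omega}$, which \emph{is} closed under $L^1$ convergence; detachment from the data on $\Gamma$ is then automatically penalized because $\Gamma\subset\Omega_o$ contributes to $P(E,\Omega_o)$, and the attachment $\partial E_t\cap\Gamma\subseteq f^{-1}(t)$ is recovered a posteriori (Lemma~\ref{le2}). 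You need this (or an equivalent relaxation) to make Step~1 work.

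Second, your final minimality step asserts that any competitor $v$ with $T_\Gamma v=f$ has superlevel sets admissible for your level-set problems. That is false as stated: the BV trace condition is a Lebesgue-point (measure-theoretic) statement and does not imply the topological inclusions $\{f\ge t\}\subseteq\overline{\{v\ge t\}}\cap\Gamma$. The paper instead extends $v$ by $F$ to $\Omega_o$, uses the trace characterization to show $\mathcal H^1(\partial^*\{v\ge t\}\cap\Gamma)=0$ for a.e.\ $t$, so that $P(\{v\ge t\},\Omega_o)=P(\{v\ge t\},\Omega)+\mathcal H^1(\partial^*\mathcal L_t\setminus\overline{\Omega})$, and only then compares with $E_t$ in $\Omega_o$. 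Two smaller points: your claim that $f-t$ has only finitely many sign changes away from $f(a),f(b)$ does not follow from continuity (and is not needed); and your crossing/exchange nesting argument must also handle the case of two chords from different levels meeting at a common point of $\Upsilon$ without a transversal interior crossing --- the paper rules this out by showing chords hitting $\Upsilon^\circ$ are orthogonal to $\Upsilon$ (Lemma~\ref{l:ort}), which uses the smoothness of $\Upsilon$ and strict convexity, together with a maximal-volume selection among perimeter minimizers.
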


First, we provide the setup for the existence proof. We can take any 
region $\Omega_o$, with Lipschitz boundary, containing $\Omega$ and such that 
$$
\d \Omega_o \cap \d\Omega = \Upsilon.
$$

Let $F\in C(\Omega_o) \cap BV(\Omega_o\setminus\bar\Omega)$ be an extension of $f$ to $\Omega_o $. 
Let  $ \cI=\overline {f(\Gamma)}$, recall that $\Gamma$ is an arc, so $\cI$ is a compact interval. For every $t\in \cI$, we define the set 
$$
\cL_t =\left\{ x\in \Omega_o: F(x)\geq t \right\}.
$$
Since $F\in BV(\Omega_o\setminus \bar \Omega)$, then $P\left(\cL_t,\Omega_o \setminus \bar \Omega\right)<\infty$ for almost every $t$. We let 
$$T=\cI\cap \{t:P(\cL_t,\Omega_o\setminus \bar \Omega)<\infty\}.$$
For every $t\in T$, we consider the minimization problem, 
\begin{equation}\label{3l}
 \min\left \{ P(E,\Omega_o):\  E\setminus\bar\Omega = \cL_t\setminus\bar\Omega \right\}.
\end{equation}
By compactness of the embedding $BV(\Omega_o)\subset L^1(\Omega_o)$, and the lower semicontinuity of the $BV$ norms imply the existence of minimizers of \eqref{3l}.

Among minimizers, we select one which is a solution to
\begin{equation}\label{3d}
 \max\left\{ |E|:\ E\hbox{ is a solution to \eqref{3l}}\right\}.
\end{equation}
The existence of a maximizer follows from the same argument as above.
It is easy to see that there is a unique maximizer up to measure $0$. In fact, assume $E^1$, $ E^2$ are maximizers of \eqref{3d}, notice first that $E^1\cup E^2\setminus \bar \Omega=\mathcal L_t\setminus \bar \Omega$, and similarly $E^1\cap E^2\setminus \bar \Omega=\mathcal L_t\setminus \bar \Omega$, then $E^1\cup E^2$ and $E^1\cap E^2$ are competitors to $E^1$ and $E^2$ in \eqref{3l}.
We know that for $A$ and $B$ of finite perimeters, we have
\begin{equation}\label{eq:unionintersection}
P(A\cup B, \Omega_o)+P(A\cap B ,\Omega_o)\leq P(A,\Omega_o)+P(B, \Omega_o)
\end{equation}
Therefore, $E^1 \cup E^2$ and $E^1 \cap E^2$ are minimizers of \eqref{3l}.
Since $E^1,$ and $E^2$ maximize of problem \eqref{3d}, then
\begin{align*}
|E^1|&\geq |E^1\cup E^2|=|E^1|+|E^2\setminus E^1|,\\
|E^2|&\geq |E^1\cup E^2|= |E^2|+|E^1\setminus E^2|.
\end{align*}
We deduce that $|E^1\bigtriangleup E^2| =0$, concluding the proof of the claim.

For every $t$ in $T$, we denote the maximizer of \eqref{3d} by $E_t$. 

The argument in the proof of \cite[Lemma 3.3]{sternberg} is local in its nature, so we obtain similarly the following key property of $E_t$:

\begin{lemma}\label{le2}
 If $\Gamma\subsetneq\d\Omega$ is open and $f: \Gamma\to\bR$ is continuous and $E_t$ is given by \eqref{3d}, then $\d E_t\cap \Gamma \subseteq f^{-1}(t)$ for every $t\in T$. \qed
\end{lemma}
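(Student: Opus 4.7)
The plan is to argue by contradiction, mimicking the local construction behind \cite[Lemma 3.3]{sternberg}. Suppose there is a point $x_0\in\partial E_t\cap\Gamma$ with $f(x_0)\neq t$; by symmetry I treat only the case $f(x_0)>t$. Continuity of $f$ on $\Gamma$, together with continuity of the extension $F$ on $\Omega_o$, produces an $r_0>0$ such that $F>t$ on $B(x_0,r_0)\cap\Omega_o$. Therefore $B(x_0,r_0)\cap(\Omega_o\setminus\bar\Omega)\subseteq \cL_t\setminus\bar\Omega$, and since every admissible set in \eqref{3l} coincides with $\cL_t$ outside $\bar\Omega$, this piece is already contained in $E_t$.

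Next I use Lemma \ref{bombieri}: $E_t$ is a perimeter minimizer in $\Omega_o$, so $\partial E_t$ is a (generalized) minimal surface in $\Omega_o$, and in the plane this forces $\partial E_t\cap\Omega$ to be locally a union of chords of $\Omega$. Because the exterior side of $B(x_0,r_0)$ is already in $E_t$, no component of $\partial E_t$ can cross $\partial\Omega$ near $x_0$, so $x_0\in\partial E_t$ forces a chord $\ell$ of $\Omega$ to terminate at $x_0$, with $E_t$ on one side of $\ell$ and $E_t^c$ occupying a wedge on the other side inside $B(x_0,r_0)\cap\bar\Omega$. Strict convexity of $\Omega$ prevents $\ell$ from being tangent to $\partial\Omega$ at $x_0$ (a tangent chord would meet $\partial\Omega$ only at $x_0$ and be degenerate), so $\ell$ and $\partial\Omega$ meet transversely at $x_0$.

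To get a contradiction I shave off the wedge. Pick $p\in\ell$ and $q\in\partial\Omega$ both close to $x_0$ on the $E_t^c$ side, let $L=\overline{pq}$, and let $D\subseteq\bar\Omega$ be the curvilinear triangle bounded by the segments of $\ell$ and $\partial\Omega$ from $x_0$ to $p$ and from $x_0$ to $q$, respectively, and by $L$. Then $D\subseteq E_t^c\cap\bar\Omega$, so $E':=E_t\cup D$ agrees with $E_t$ in $\Omega_o\setminus\bar\Omega$ and is admissible in \eqref{3l}. Direct bookkeeping on the reduced boundary yields
\[
P(E',\Omega_o)-P(E_t,\Omega_o)=|L|-|\overline{x_0 p}|-\cH^1(\partial\Omega\cap\overline{x_0 q}).
\]
Since $\cH^1(\partial\Omega\cap\overline{x_0 q})\geq |\overline{x_0 q}|$ and the non-degenerate triangle with vertices $x_0,p,q$ satisfies the strict triangle inequality $|L|<|\overline{x_0 p}|+|\overline{x_0 q}|$, the right-hand side above is strictly negative, contradicting the minimality of $E_t$ in \eqref{3l}. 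The case $f(x_0)<t$ is handled symmetrically by shaving a wedge off $E_t$ rather than adding one.

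The main obstacle is the structural step: rigorously extracting from Lemma \ref{bombieri} the local picture of $\partial E_t$ near a boundary point $x_0\in\Gamma$, and in particular the existence of a chord $\ell$ arriving transversely at $x_0$. Everything else---the continuity input, the perimeter bookkeeping, and the triangle inequality---is quantitative and routine, and, crucially, purely local in $x_0$, which is exactly why the proof carries over with $\Gamma$ a proper open subset of $\partial\Omega$ rather than all of $\partial\Omega$.
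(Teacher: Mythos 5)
First, a point of reference: the paper does not actually supply a proof of Lemma \ref{le2} --- it only remarks that the proof of \cite[Lemma 3.3]{sternberg} is local in nature and therefore survives the passage from $\partial\Omega$ to an open subset $\Gamma$. So your proposal is really competing with the Sternberg--Williams--Ziemer argument. Your overall strategy is the right one and matches it in spirit: argue by contradiction, use continuity of the extension $F$ to place a full exterior neighbourhood of $x_0$ inside (or outside) $\cL_t$, and then build a local competitor for \eqref{3l} with strictly smaller perimeter. The wedge/triangle-inequality computation is correct in the configuration you treat, modulo the fact that your bookkeeping identity should be an inequality $\le$ unless you also verify that $D$ lies entirely in $E_t^c$ and meets no other pieces of $\partial^* E_t$ (any extra pieces only help you, so this is harmless).

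The genuine gap is the structural step, which you flag but do not close, and it is not a mere technicality: $x_0\in\partial E_t$ does \emph{not} a priori force a chord of $\partial E_t$ to terminate at $x_0$. Two further local pictures are compatible with everything you established. (a) A full one-sided neighbourhood $B(x_0,\epsilon)\cap\Omega$ lies in $E_t^c$, so that no chord of $\partial E_t$ comes near $x_0$ and $\partial^* E_t$ near $x_0$ is an arc of $\partial\Omega$ itself. (b) Components of $E_t^c\cap\Omega$ (caps bounded by chords with both endpoints on $\partial\Omega$) accumulate at $x_0$ without any chord having $x_0$ as an endpoint. In both cases your wedge $D$ is unavailable, but a different competitor works: fill in the region between a short chord $[a,b]$, $a,b\in\partial\Omega$ near $x_0$, and the boundary arc $\overline{ab}$; strict convexity gives $\cH^1(\overline{ab})>|a-b|$, so the perimeter strictly drops. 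Note that this is exactly where strict convexity (SWZ's hypothesis that $\partial\Omega$ is nowhere locally length-minimizing) is indispensable, whereas in your write-up it appears only in the inessential role of excluding tangency. With cases (a) and (b) supplied, your argument becomes a correct, self-contained planar rendering of the proof the paper merely cites.
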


An important step in the construction is checking the ordering of $E_t$ expected for candidates for the level sets.  First, we need the following geometrical result, that relies on the fact that $\d\Omega\setminus\Gamma$ is smooth and $\Omega$ is strictly convex:
\begin{lemma}\label{l:ort}
If $\Upsilon$ is a smooth arc, $\gamma$ is a connected component of
$\d E_t$ in $\Omega$ 
intersecting $\Upsilon^\circ$, the
interior of $\Upsilon$, then $\gamma$ is orthogonal to $\Upsilon$. 
\end{lemma}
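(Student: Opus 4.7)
\emph{Proof plan.} My strategy is a first-variation argument for the perimeter. Since $E_t$ is a perimeter minimizer in \eqref{3l}, any compactly supported modification of $E_t$ inside $\Omega$ is an admissible competitor, and the argument of Lemma \ref{bombieri} shows that $\partial E_t\cap\Omega$ is a minimal surface in $\Omega$. In the plane this forces every connected component $\gamma$ of $\partial E_t\cap\Omega$ to be a straight line segment whose closure joins two points of $\partial\Omega$ (a free endpoint interior to $\Omega$ could be erased to reduce perimeter). Write the endpoints as $q$ and $p$, with $p\in\Upsilon^\circ$ by hypothesis.

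To extract orthogonality, I fix a small ball $B_r(p)$ so that $B_r(p)\cap\partial E_t=\gamma\cap B_r(p)=:[y,p]$ with $y$ the exit point of $\gamma$ on $\partial B_r(p)$, and let $s\mapsto p_s$ be an arc-length parametrization of $\Upsilon$ near $p$ with $p_0=p$ and $\dot p_0=\tau_\Upsilon(p)$. Strict convexity of $\Omega$ ensures $[y,p_s]\subset\bar\Omega$ for all sufficiently small $|s|$. I define $E_s$ by taking the symmetric difference of $E_t$ with the thin curvilinear triangle bounded by $[y,p]$, $[y,p_s]$, and the arc of $\Upsilon$ from $p$ to $p_s$. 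Since this modification is supported in $\bar\Omega\cap B_r(p)$, one has $E_s\setminus\bar\Omega=\mathcal L_t\setminus\bar\Omega$, so $E_s$ is admissible in \eqref{3l}.

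The crucial observation is that $\Upsilon\subset\partial\Omega_o$, hence $|D\chi_{E_s}|$ assigns no mass to $\Upsilon$; consequently the only change between $P(E_s,\Omega_o)$ and $P(E_t,\Omega_o)$ is the replacement of the segment $[y,p]$ by $[y,p_s]$, giving
\[
 P(E_s,\Omega_o)-P(E_t,\Omega_o)=|p_s-y|-|p-y|.
\]
Minimality of $E_t$ forces the right-hand side to be non-negative for all small $s$, so its derivative at $s=0$ vanishes and
\[
 0=\frac{p-y}{|p-y|}\cdot\tau_\Upsilon(p)=\frac{p-q}{|p-q|}\cdot\tau_\Upsilon(p),
\]
because $y$ lies on the segment $[q,p]$. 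This is exactly the statement that $\gamma$ is orthogonal to $\Upsilon$ at $p$. The main points requiring care are that no other component of $\partial E_t$ is disturbed by the perturbation (handled by localizing to $B_r(p)$ with $r$ small enough, using that $\partial E_t$ is a locally finite union of line segments) and that no perimeter is added along the $\Upsilon$-arc between $p$ and $p_s$ (because $\Upsilon\subset\partial\Omega_o$, which is built into the construction of $\Omega_o$). If $q$ also happens to lie in $\Upsilon^\circ$, the same argument applied at $q$ yields orthogonality there as well.
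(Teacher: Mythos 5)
Your proof is correct and takes essentially the same approach as the paper: both arguments first reduce $\gamma$ to a line segment using minimality in the interior, and then exploit the admissibility of a local competitor that moves the contact point along $\Upsilon$, where no perimeter is charged because $\Upsilon\subset\partial\Omega_o$. The only cosmetic difference is that you phrase the endpoint perturbation as a vanishing first variation of $s\mapsto|p_s-y|$, whereas the paper produces a strictly shorter competitor directly via the nearest-point projection and the Pythagorean theorem.
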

\proof
Let $x_t\in \partial E_t\cap \Upsilon^\circ$.  We know that $\d E_t$ is a minimal surface in $\R^2$, then $\gamma$ must be a segment $[x_t,y_t]\subseteq \partial E_t$ intersecting $\Upsilon^\circ$ at $x_t$. We will show that $[x_t,y_t]$ is normal to $\partial \Omega$ at $x_t$. Take a ball  $B(x_t, \epsilon)$, such that $B(x_t, \epsilon)\cap \Gamma=\emptyset$, and let $p_t\in \Omega$ the intersection of $\d B(x_t,\varepsilon)$ and $[x_t,y_t]$. Suppose now, that our claim does
not hold, then convexity of $B(x_t, \epsilon)\cap \Omega$ and the Pythagorean Theorem imply that
dist$(p_t,\Upsilon)< $ dist$(x_t,p_t)$. This implies that we can make $P(E_t, \Omega_o)$ smaller, contrary to our minimality assumption.\qed



\begin{lemma}\label{leo}
 Let $s,t\in T$ be such that $s<t$. If 
 $$
 \d E_t \cap \partial \Gamma= \emptyset\qquad
 \hbox{or}\qquad \d E_s \cap\partial \Gamma = \emptyset,
 $$
 then $E_t \Subset E_s$.
\end{lemma}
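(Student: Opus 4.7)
The plan is to prove first the inclusion $E_t\subseteq E_s$ up to null sets by a cut-and-paste argument, and then upgrade this to the compact containment $\Subset$ by a case analysis on possible contact points of $\partial E_t$ and $\partial E_s$, with the $\partial\Gamma$-avoidance hypothesis used to close the hardest case.

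\textbf{Inclusion.} Since $s<t$, the sets $E_t\cap E_s$ and $E_t\cup E_s$ coincide with $\mathcal{L}_t$ and $\mathcal{L}_s$ outside $\bar\Omega$, so they are admissible competitors for \eqref{3l} at levels $t$ and $s$ respectively. Combining the submodularity inequality \eqref{eq:unionintersection} with the minimality of $E_t$ and $E_s$ forces equalities throughout, so $E_t\cap E_s$ and $E_t\cup E_s$ are themselves minimizers of \eqref{3l}. Invoking the maximality \eqref{3d} of $E_s$ yields $|E_s|\ge|E_t\cup E_s|$, hence $|E_t\setminus E_s|=0$, which by convention \eqref{e:des} gives $E_t\subseteq E_s$ pointwise.

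\textbf{Extracting a common chord.} Assuming for contradiction $E_t\not\Subset E_s$, I pick a point $x\in\partial E_t\cap\partial E_s$ inside $\bar\Omega$. If $x\in\Gamma$, Lemma \ref{le2} immediately gives $f(x)=t=s$, a contradiction. If $x\in\Omega$, both boundaries are minimal near $x$, hence straight segments in $\bR^2$; since $E_t\subseteq E_s$ they cannot cross at $x$, so they are tangent and therefore collinear, yielding a common chord $[p,q]\subseteq\partial E_t\cap\partial E_s$ with $p,q\in\partial\Omega$. If $x\in\Upsilon^\circ$, Lemma \ref{l:ort} ensures that the segments of $\partial E_t$ and $\partial E_s$ meeting $\Upsilon^\circ$ at $x$ are both orthogonal to $\Upsilon$, so they again coincide along a common chord.

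\textbf{Ruling out the chord and main obstacle.} For the endpoints of the shared chord $[p,q]$: if $p$ or $q$ lies in $\Gamma$, Lemma \ref{le2} applied to both minimizers yields the contradiction $f(p)\in\{t\}\cap\{s\}$; if $p$ or $q$ lies in $\partial\Gamma=\{a,b\}$, the hypothesis that $\partial E_t\cap\partial\Gamma=\emptyset$ or $\partial E_s\cap\partial\Gamma=\emptyset$ is violated. The remaining configuration $p,q\in\Upsilon^\circ$ is the main obstacle: Lemma \ref{l:ort} forces $[p,q]$ to be perpendicular to $\Upsilon$ at both endpoints, a configuration not precluded by strict convexity alone (a diameter of a disk is such a chord). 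To close this subcase I plan to combine the local coincidence of $E_t$ and $E_s$ in a tubular neighbourhood of $[p,q]$ (which follows from $E_t\subseteq E_s$ together with the shared oriented boundary) with the trace constraints from Lemma \ref{le2}, $\partial E_t\cap\Gamma\subseteq f^{-1}(t)$ and $\partial E_s\cap\Gamma\subseteq f^{-1}(s)$; the fact that the $\Gamma$-traces of $\chi_{E_t}$ and $\chi_{E_s}$ must therefore differ, propagated through the maximality property \eqref{3d}, should contradict the existence of the shared chord and complete the argument.
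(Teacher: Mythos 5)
Your inclusion step ($E_t\subseteq E_s$ via submodularity, minimality in \eqref{3l} and maximality in \eqref{3d}) and your treatment of touching points lying on $\Gamma$, at $\partial\Gamma$, or in $\Omega$ all track the paper's argument. But the proof is not complete: the case you yourself flag as ``the main obstacle'' --- a common chord $[p,q]$ of $\partial E_t$ and $\partial E_s$ with both endpoints in $\Upsilon^\circ$ --- is left as a plan (``should contradict'') rather than an argument, and the plan as stated is unconvincing. The chord lies away from $\Gamma$, so the observation that the traces of $\chi_{E_t}$ and $\chi_{E_s}$ on $\Gamma$ differ does not by itself interact with the existence of the chord, and it is not explained how the maximality \eqref{3d} would propagate that information across $\Omega$ to the chord. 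This is a genuine gap. (For comparison, at the corresponding point the paper argues that a touching point in $\Upsilon^\circ$ forces the two components onto the same perpendicular line by Lemma \ref{l:ort}, hence to coincide, and then asserts this yields $\cL_s=\cL_t$, a contradiction.)

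The configuration can in fact be excluded outright for a \emph{single} minimizer, without comparing $E_t$ and $E_s$: no connected component of $\partial E_t\cap\Omega$ can be a chord $[p,q]$ with $p,q\in\Upsilon^\circ$. Since $\Gamma$ is a connected arc disjoint from $\{p,q\}$, the chord cuts $\Omega$ into two pieces, one of which, say $\Omega_2$, satisfies $\partial\Omega_2\cap\partial\Omega\subseteq\Upsilon\subseteq\partial\Omega_o$. Replace $E_t$ by $E_t\cup\Omega_2$ or by $E_t\setminus\Omega_2$, according to which side of the chord $E_t$ lies on near $[p,q]$. This does not change the set outside $\bar\Omega$ (so admissibility in \eqref{3l} is preserved), it removes $[p,q]$ from the measure-theoretic boundary, and any new interface it creates sits on $\Upsilon\subset\partial\Omega_o$, which is not charged by $P(\cdot,\Omega_o)$. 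Hence the perimeter drops by $\mathcal H^1([p,q])>0$, contradicting the minimality of $E_t$. With this observation your case analysis closes and the lemma follows.
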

\begin{proof}
We prove first that $E_t\subseteq E_s$. In fact, since $\cL_t\subseteq \cL_s$, then 
\begin{equation*}
 E_s\cup E_t \setminus \bar \Omega = \cL_s \setminus \bar \Omega,\qquad
E_s\cap E_t \setminus \bar \Omega = \cL_t \setminus \bar \Omega  .
\end{equation*}
Therefore by \eqref{3l}, $P(E_s\cup E_t, \Omega_o)\geq P(E_s, \Omega_o)$ and $P(E_s\cap E_t, \Omega_o)\geq P(E_t, \Omega_o)$. Now, \eqref{eq:unionintersection} implies that  $P(E_s\cup E_t, \Omega_o)= P(E_s, \Omega_o)$. As a result, \eqref{3d} yields $|E_t \cup E_s| \le |E_s|$
i.e.
$|E_t\setminus E_s|=0$, hence due to (\ref{e:des}), $E_t\subseteq E_s$.


We will show that $\partial E_s\cap \partial E_t\cap( \partial \Omega\setminus \d\Gamma)=\emptyset$.
In fact, we have 
by Lemma \ref{le2}, $\partial E_s\cap \partial E_t\cap \Gamma =\emptyset$, so it is enough to prove that 
$\partial E_s\cap \partial E_t\cap \Upsilon^\circ=\emptyset$. Let us assume otherwise and take
$x_0\in \d  E_t \cap \d E_s \cap \Upsilon^\circ$. By Lemma \ref{l:ort}, the connected components of  $\d E_t$ and $\d E_s$, which intersect $\Upsilon$ must both 
meet $\Upsilon$ orthogonally. Since  $\Upsilon$ is smooth, then $\d  E_t$ and $\d E_s$ must be contained in the line perpendicular to $\Upsilon$ and passing through $x_0$, hence they
must coincide. As a result,  $\cL_s=\cL_t$, a contradiction. 


It remains to show that $\partial E_t\cap \partial E_s\cap \Omega=\emptyset$. Indeed, 
since $\d E_t$ and $\d E_s$ are minimal surfaces in $\R^2$, then each of their components in $\Omega$ is a segment. Hence, if they intersect in $\Omega$, then by \cite[Theorem 2.2]{sternberg} they must coincide. Thus, we reached a contradiction.
\qed
\end{proof}

We set $A_t := \overline{E_t \cap \Omega}$, for every $t\in T$. 
Following \cite{sternberg}, we define our candidate for a solution to \eqref{eq:partialLG}, by the formula,
\begin{equation}\label{3u}
u(x) = \max\{ t\in T: \ x\in A_t\}. 
\end{equation}
We have to make sure that for each $x\in \Omega$, there is $t\in T$ such that  $x\in A_t$. Indeed, if $t=\inf_{\Gamma} f -\epsilon$, $\epsilon>0$, then  we have that $P(\cL_t,\Omega_o\setminus \bar \Omega)=0$ and $A_t\cap \Omega = \Omega$. Similarly, if $t=\sup_{\Gamma} f +\epsilon$, then  $P(\cL_t,\Omega_o\setminus \bar \Omega)=0$.
 

\begin{lemma}\label{lem:inclusion} For every $t\in T$, we have
$$\{x\in \Gamma: f(x)>t\}\subseteq E_t^\circ\cap \Gamma\subseteq A_t\cap \Gamma \subseteq \{x\in \Gamma: f(x)\geq t\}.$$
\end{lemma}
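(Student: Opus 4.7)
The statement is a chain of three inclusions, and I would prove each separately using two structural facts from the setup: first, $\Gamma\subseteq\Omega_o^\circ$ because $\d\Omega_o\cap\d\Omega=\Upsilon$ forces $\Gamma=\d\Omega\setminus\Upsilon$ to lie in the interior of $\Omega_o$; second, Lemma \ref{le2}, which asserts $\d E_t\cap\Gamma\subseteq f^{-1}(t)$. The continuity of the extension $F$ of $f$ on $\Omega_o$ and the convention \eqref{e:des} (membership in $E_t$ via positive upper density) are the further ingredients I would use repeatedly.

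For the first inclusion $\{f>t\}\cap\Gamma\subseteq E_t^\circ\cap\Gamma$, I would fix $x_0\in\Gamma$ with $f(x_0)>t$ and, by continuity of $F$, pick $r>0$ with $B(x_0,r)\subseteq\Omega_o$ and $F>t$ on $B(x_0,r)$. Since $E_t$ agrees with $\cL_t$ outside $\bar\Omega$, this gives $B(x_0,r)\setminus\bar\Omega\subseteq E_t$ up to Lebesgue measure zero; because $x_0$ is a Lipschitz boundary point of $\Omega$, that half-ball has density $1/2$ at $x_0$, so \eqref{e:des} places $x_0\in E_t$. If $x_0$ were not interior to $E_t$, every neighborhood of $x_0$ would also meet $E_t^c$, hence $x_0\in\overline{E_t}\cap\overline{E_t^c}=\d E_t$, and Lemma \ref{le2} would force $f(x_0)=t$, a contradiction. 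The second inclusion $E_t^\circ\cap\Gamma\subseteq A_t\cap\Gamma$ is immediate topology: a neighborhood of $x_0$ contained in $E_t$ has nonempty intersection with $\Omega$ accumulating at $x_0\in\d\Omega$, so $x_0\in\overline{E_t\cap\Omega}=A_t$.

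The third inclusion $A_t\cap\Gamma\subseteq\{f\ge t\}$ I would handle by a contradiction dual to the first. Given $x_0\in A_t\cap\Gamma$, the definition gives $x_0\in\overline{E_t}$. Assuming $f(x_0)<t$, continuity yields a ball $B(x_0,r)\subseteq\Omega_o$ on which $F<t$, so $B(x_0,r)\setminus\bar\Omega$ is disjoint from $\cL_t$, and hence from $E_t$ up to measure zero, placing $x_0\in\overline{E_t^c}$ and therefore $x_0\in\d E_t\cap\Gamma$. Lemma \ref{le2} then forces $f(x_0)=t$, the desired contradiction. I expect the only subtle point to be the bookkeeping that converts the density/continuity statements into the topological memberships $x_0\in\overline{E_t}$ and $x_0\in\overline{E_t^c}$ needed to apply Lemma \ref{le2}; once that is set up, each inclusion reduces to a short step.
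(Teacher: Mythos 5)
Your proposal is correct and follows essentially the same route as the paper: both hinge on the continuity of the extension $F$, the fact that $E_t$ coincides with $\cL_t$ off $\bar\Omega$, and Lemma \ref{le2} to exclude (or exploit) membership in $\d E_t\cap\Gamma$. The only cosmetic difference is in the third inclusion, where the paper decomposes $\bar E_t\cap\Gamma$ into $\d E_t\cap\Gamma$ and $E_t^\circ\cap\Gamma$ and argues directly that $F\ge t$ near such a point forces $f\ge t$, while you run the contrapositive through Lemma \ref{le2}; the two arguments are logically equivalent.
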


\begin{proof}
In order to prove the first inclusion we
take $x\in \Gamma$ such that $f(x)>t$.  Then, there exists a neighborhood of $x$ in $\Omega_o\setminus \Omega$ such that $F(x)>t$, therefore by the definition of $E_t$, we deduce that $x\in \bar E_t$. By Lemma \ref{le2}, $x\notin \partial E_t$, hence $x\in E_t^\circ$.

Now, we prove the second inclusion. Let $z\in E_t^\circ\cap \Gamma$, there exists a neighborhood $V_z$ of $z$ such that $V_z\subseteq E_t$. Since $z\in \Gamma$, then there exists a sequence $z_n\in E_t\cap \Omega$ such that $z_n\to z$, thus $z\in A_t$.
%
%

To prove the third inclusion, notice that $A_t\cap \Gamma\subseteq \bar E_t\cap \Gamma$. Take $x\in \bar E_t \cap \Gamma=(\partial E_t \cap \Gamma) \cup (E_t^{\circ}\cap \Gamma)$. By Lemma \ref{le2}, it is enough to show that $E_t^{\circ}\cap \Gamma \subseteq \{x\in \Gamma: f(x)\geq t\}$. Assume $x\in E_t^\circ\cap \Gamma$, then there exists a neighborhood of $x$, $V_x\subseteq E_t^\circ$ and by the definition of $E_t$, $F(y)\geq t$ for all $y\in V_x\cap (\Omega_o\setminus \bar \Omega)$, and hence by continuity $f(x)\geq t$.\qed
\end{proof}

The following Lemma follows directly from Lemma \ref{leo}, and the definition of $A_t$.
\begin{lemma}\label{lem:inclusionofAt}
If  $\d E_t \cap \partial \Gamma= \emptyset\,$
or $\,\d E_s \cap \partial \Gamma = \emptyset,
 $
 then $A_t\Subset A_s$ for $s<t$.
\end{lemma}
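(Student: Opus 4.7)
The strategy is to propagate the separation supplied by Lemma \ref{leo} to $A_t$ via the definition $A_t = \overline{E_t \cap \Omega}$. What Lemma \ref{leo} actually establishes under the given hypothesis is not only $E_t \subseteq E_s$ but also $\partial E_t \cap \partial E_s = \emptyset$, obtained there by ruling out intersections piece-by-piece on $\Omega$, on $\Gamma$, and on $\Upsilon^\circ$, with the hypothesis on $\partial \Gamma$ disposing of the two endpoints of $\Upsilon$. These are the two ingredients I would transfer to $A_t$ and $A_s$.

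First I would record the inclusion $A_t \subseteq A_s$, which is immediate from $E_t \cap \Omega \subseteq E_s \cap \Omega$ and monotonicity of closure. Next, for the boundary separation, I would use that $\Omega$ is open so that $A_t \cap \Omega = \overline{E_t} \cap \Omega$; hence points of $\partial A_t \cap \Omega$ lie in $\partial \overline{E_t} \cap \Omega \subseteq \partial E_t \cap \Omega$, and the analogous inclusion holds for $A_s$. Combining these two inclusions with $\partial E_t \cap \partial E_s \cap \Omega = \emptyset$ from Lemma \ref{leo} yields $\partial A_t \cap \partial A_s \cap \Omega = \emptyset$. Together with $A_t \subseteq A_s$, this is exactly the $\Subset$ relation, read in the same sense as in Lemma \ref{leo}: nested with disjoint topological boundaries in the region of interest.

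The main (and essentially only) obstacle is interpretational rather than mathematical: since both $A_t$ and $A_s$ may meet $\partial \Omega$ (for instance on portions of $\Gamma$ where $f \ge t$, by Lemma \ref{lem:inclusion}), the symbol $\Subset$ cannot be read as compact containment in $\mathbb{R}^2$ in the literal sense. Once the same relaxed reading used for $E_t \Subset E_s$ in Lemma \ref{leo} is adopted, the argument reduces to the two bookkeeping steps above, confirming the author's remark that the statement "follows directly" from Lemma \ref{leo} and the definition of $A_t$.
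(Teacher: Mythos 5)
Your proposal is correct and matches the paper's (implicit) argument: the paper offers no proof beyond the remark that the lemma ``follows directly from Lemma \ref{leo} and the definition of $A_t$,'' and your two bookkeeping steps --- $A_t\subseteq A_s$ by monotonicity of closure, and $\partial A_t\cap\partial A_s\cap\Omega=\emptyset$ via the local identity $A_t\cap\Omega=\overline{E_t}\cap\Omega$ together with $\partial\overline{E_t}\subseteq\partial E_t$ --- are exactly the verification being elided. Your observation that $\Subset$ must be read in the same relaxed sense as in Lemma \ref{leo} (nesting plus disjointness of boundaries in the relevant region, not compact containment in $\R^2$) is also the correct reading of the paper's notation.
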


We summarize the main achievement of the construction.
\begin{proposition}\label{p:co}
 Let us suppose that $u$ is given by the above version of the SWZ construction, then $u$ is continuous in $\Omega\cup \Gamma$
 and $u =f$ on $\Gamma$.
\end{proposition}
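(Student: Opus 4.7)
The proposition has two parts — the identity $u \equiv f$ on $\Gamma$, and continuity of $u$ on $\Omega \cup \Gamma$ — and I would prove them in that order, with the bulk of the work in the second.

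\textbf{Trace on $\Gamma$.} For each $x \in \Gamma$, Lemma \ref{lem:inclusion} yields, for every $t \in T$,
\[
f(x) > t \;\Longrightarrow\; x \in A_t \;\Longrightarrow\; f(x) \geq t.
\]
Hence $\sup\{t \in T : t < f(x)\} \leq u(x) \leq \sup\{t \in T : t \leq f(x)\}$, and since $T$ is dense in $\cI = \overline{f(\Gamma)}$, both bounds coincide with $f(x)$.

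\textbf{Superlevel sets and upper semicontinuity.} The engine of the continuity proof is the identity
\[
\{x \in \Omega \cup \Gamma : u(x) \geq s\} \;=\; A_s \cap (\Omega \cup \Gamma), \qquad s \in T,
\]
which follows from the weak monotonicity $E_t \subseteq E_s$ for $t > s$ (this inclusion is established in the opening lines of the proof of Lemma \ref{leo} without any hypothesis on $\partial \Gamma$) together with the definition \eqref{3u}. Since each $A_s = \overline{E_s \cap \Omega}$ is closed in $\R^2$, these superlevel sets are relatively closed in $\Omega \cup \Gamma$, giving upper semicontinuity of $u$ for free.

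\textbf{Lower semicontinuity.} For $t \in \R$ and $x_0 \in \{u > t\}$, the plan is to produce levels $t', s \in T$ with $t < t' < s \leq u(x_0)$ to which Lemma \ref{lem:inclusionofAt} applies, yielding the strict containment $A_s \Subset A_{t'}$. Then $x_0 \in A_s$ lies in the interior of $A_{t'}$ relative to $\Omega \cup \Gamma$, and hence in a relatively open subset of $\{u \geq t'\} \subseteq \{u > t\}$; this shows openness of $\{u > t\}$ and therefore lower semicontinuity. Combined with USC, this gives continuity at every point of $\Omega \cup \Gamma$; at a point $x_0 \in \Gamma$ the limit value is automatically $f(x_0)$ by the first step, so no separate boundary argument is needed.

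\textbf{Main obstacle.} The one genuinely delicate ingredient is the hypothesis $\partial E_t \cap \partial \Gamma = \emptyset$ required by Lemma \ref{lem:inclusionofAt}, with $\partial \Gamma = \{a, b\}$. I would handle it by exploiting the fact that the hypothesis is asymmetric — only one of the two chosen levels needs to avoid $\{a,b\}$ — and by using the essentially pairwise disjointness of the boundaries $\{\partial E_t\}_{t \in T}$ away from $\partial \Gamma$ given by Lemma \ref{leo} to argue that the admissible levels are dense in $T$. Making this density argument airtight, so that the intermediate levels $t'$ and $s$ in the LSC step can always be selected, is where I expect the bookkeeping to absorb most of the effort.
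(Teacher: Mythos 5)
Your first two steps are sound and essentially match the paper's strategy: the pointwise identity $u=f$ on $\Gamma$ from the sandwich of Lemma \ref{lem:inclusion}, and upper semicontinuity from the closedness of the sets $A_s$ together with the unconditional weak monotonicity $E_t\subseteq E_s$ for $s<t$. The genuine gap is in the lower semicontinuity step. You route it through Lemma \ref{lem:inclusionofAt} and propose to discharge its hypothesis ($\partial E\cap\partial\Gamma=\emptyset$ for at least one of the two levels) by arguing that the admissible levels are dense in $T$. That density claim is false: the set of $t$ with $\partial E_t\cap\{a,b\}\neq\emptyset$ can contain a whole interval. This is exactly what happens in the situation of Theorem \ref{main2}, Part 1): for every $t$ with $t-f(a)>0$ small enough that $x^t\in x(B_a)$ and $y^t\in x(B_b)$, one has $\partial E_t=[x^t,a]\cup[b,y^t]$, so both endpoints of $\Upsilon$ lie on $\partial E_t$ for all $t$ in an interval above $f(a)$. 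If $u(x_0)$ lies in such an interval and $t$ sits just below it, every candidate pair $t'<s$ in $(t,u(x_0)]$ is inadmissible and your selection fails. The repair --- and what the paper's argument actually relies on --- is that one never needs the strict containment $A_s\Subset A_{t'}$: the weak inclusion $A_s\subseteq A_{t'}$ together with $\partial E_s\cap\partial E_{t'}\cap\Omega=\emptyset$ for $s\neq t'$ (two distinct minimal segments can meet only at a point of $\partial\Gamma$, which is not in $\Omega$; this is the final part of the proof of Lemma \ref{leo} and does not use the $\partial\Gamma$ hypothesis) already forces $x_0\in A_{t'}^{\circ}$, hence a ball about $x_0$ inside $\{u\geq t'\}\subseteq\{u>t\}$.

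Second, your claim that ``no separate boundary argument is needed'' at $x_0\in\Gamma$ is too quick. Every point of $A_{t'}\cap\partial\Omega$ lies on the topological boundary of $A_{t'}$ (since $A_{t'}\subseteq\overline{\Omega}$), so placing $x_0$ in a relatively open subset of $A_{t'}$ does not follow from any containment of the $A$-sets alone; you need the first inclusion of Lemma \ref{lem:inclusion} in its full strength, $f(x_0)>s\Rightarrow x_0\in E_s^{\circ}$, where $E_s^{\circ}$ is genuinely open in $\R^2$ because $E_s$ extends beyond $\overline{\Omega}$. This gives $\liminf_{z\to x_0}u(z)\geq f(x_0)$; the matching $\limsup$ bound is a separate short argument using the closedness of $A_r$ and the third inclusion of Lemma \ref{lem:inclusion}. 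That is precisely how the paper concludes, and it simultaneously delivers the trace statement in the form needed for Theorem \ref{main}, namely $\lim_{z\to x_0,\ z\in\Omega}u(z)=f(x_0)$, rather than only the pointwise value of formula \eqref{3u} at $x_0$.
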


\begin{proof} We notice first that
$$\{x:u(x)\geq t\}=\bigcap_{s\in T, s<t}A_s\,\quad \hbox{and}\quad 
\{x:u(x)>t\}=\bigcup_{ s\in T, s>t} A_s.$$
The first set is obviously closed. We will prove that $\bigcup_{ s\in T, s>t} A_s\cap \Omega$ is open. Indeed,  let us take $x_0\in \bigcup_{ s\in T, s>t} A_s\cap \Omega$, so there is $s_0>t$ such that $x_0\in A_{s_0}$ i.e. $u(x_0) \ge s_0>t$. Moreover, $\dist(x_0,\d\Omega) >0$. We take
\begin{equation*}
 r = \frac12 \min\{ \dist(x_0,\d\Omega), \dist(x_0,\d A_t)\}>0.
\end{equation*}
Of course $B(x_0,r)\subset \Omega$.
We claim that $B(x_0,r)\subset \bigcup_{ s\in T, s>t} A_s$. For any $x \in B(x_0,r)$, we have $\dist(x,\d A_t)>0$, so 
$u(x)>t$ and the claim follows.
Hence, the continuity of $u$ in $\Omega$.

We shall prove now that for every $x_0\in \Gamma$, $\lim_{x\to x_0, x\in \Omega}u(x)=f(x_0)$.
We define $t=f(x_0)$ and take any $s$,  $s<t$. Then, $f(x_0)>s$ and by Lemma \ref{lem:inclusion}, $x_0\in E_s^\circ\cap \Gamma$. Hence,  there exists a neighborhood $V_{x_0}$ such that $V_{x_0}\cap \Omega\subseteq E_s\cap \Omega \subseteq A_s$, therefore $u(z)\geq s$ for all $z\in V_{x_0}\cap \Omega\subseteq A_s$. Thus, we deduce that $\liminf_{z\to x_0}u(z)\geq s$ for all $s<t$, hence $\liminf_{z\to x_0}u(z)\geq t$. It remains to show that $\limsup_{z\to x_0}u(z)\leq t$. Let us assume otherwise and take $a=\limsup_{z\to x_0}u(z)> t$. Let $r\in (t,a)$, then there exists a sequence $z_n\in \Omega$ so that $u(z_n)\geq  r$. Hence $z_n\in A_r\cap \Gamma$, since $A_r$ is closed, we deduce that $x_0\in A_r\cap \Gamma$. Thus, by Lemma \ref{lem:inclusion}, we deduce $f(x_0)\ge r>t$, thus we reached a contradiction. 
\qed

\end{proof}
%

\textbf{\textit{Proof of Theorem \ref{main}}.} It remains to show that $u$ is a function of least gradient. Let $v$ be a competitor of $u$, i.e. $v\in BV(\Omega)$ and $T|_\Gamma v=f$. We extend $v$ onto $\Omega_o$ and we call the extension by $\tilde v$, we require also that $\tilde v=F$ on $\Omega_o\setminus \Omega$. Notice that $\tilde v \in BV(
\Omega_0)\cap C(\Omega_o\setminus \bar \Omega)$. Let $t\in \cI$, for $\mathcal H^1$-almost every $x\in \Gamma$,

$$\lim_{r\to 0}\fint_{B(x,r)\cap \Omega} |\tilde v(y)-f(x)|\,dy=0.$$
Let $F_t=\{v\geq t\}$, and $\partial^*F_t$ be the reduced boundary of $F_t$. We shall show that for every $x\in \partial^*F_t\cap \Gamma$, $f(x)=t$. Let us assume otherwise, i.e. $f(x)=t-\varepsilon$, then
\begin{align*}
0&=\lim_{r\to 0}\dfrac{1}{|B(x,r)\cap \Omega|}\left(\int_{B(x,r)\cap \Omega\cap\{\tilde v<t\}}|\tilde v(y)-f(x)|\,dy + \int_{B(x,r)\cap \Omega\cap\{\tilde v\geq t\}}|\tilde v(y)-f(x)|\,dy\right)\\
&\geq \limsup_{r\to 0}\dfrac{1}{|B(x,r)\cap \Omega|}\int_{B(x,r)\cap \Omega\cap\{\tilde v\geq t\}}|\tilde v(y)-f(x)|\,dy\\
&\geq \varepsilon \limsup_{r\to 0}\dfrac{|B(x,r)\cap \Omega\cap F_t|}{|B(x,r)\cap \Omega|}.
\end{align*}
Using the fact that $f$ is the trace of $\tilde v$ as a function on $\Omega_o\setminus \bar\Omega$, we obtain similarly that
$$ \limsup_{r\to 0}\dfrac{|B(x,r)\cap (\Omega_o\setminus \Omega)\cap F_t|}{|B(x,r)\cap \Omega|}=0.$$
Hence,
$$\lim_{r\to 0} \dfrac{|B(x,r)\cap \Omega_o\cap F_t|}{|B(x,r)|}=0.$$
This contradicts the fact that $\partial^*F_t\subseteq \partial_M F_t$. 

We come to a similar conclusion if we assume $f(x)>t$, hence $f(x)=t$. Therefore, $\mathcal H^1(\partial^* F_t\cap \Gamma)\leq \mathcal H^1(f^{-1}(t))=0$ for all but countable many $t$.
Thus, by \eqref{eq:important}
\begin{align}
P(F_t,\Omega_o)&=\mathcal H^1(\Omega_o\cap \partial^* F_t)=\mathcal H^1(\Omega \cap \partial^*F_t)+\mathcal H^1(\Gamma \cap \partial^*F_t)+\mathcal H^1(\Omega_o\setminus \bar \Omega\cap \partial^*F_t)\notag\\
&=P(F_t,\Omega)+\mathcal H^1(\partial^* \cL_t\setminus \bar \Omega).\label{eq:result1}
\end{align}


Similarly, by Lemma \ref{le2}, $\mathcal H^1(\partial^*E_t\cap \Gamma)=0$, then
\begin{align}
P(E_t,\Omega_o)&=\mathcal H^1(\Omega_o\cap \partial^* E_t)=\mathcal H^1(\Omega \cap \partial^*E_t)+\mathcal H^1(\Gamma \cap \partial^*E_t)+\mathcal H^1(\Omega_o\setminus \bar \Omega\cap \partial^*E_t)\notag\\
&=P(E_t,\Omega)+\mathcal H^1(\partial^* \cL_t\setminus \bar \Omega).\label{eq:result2}
\end{align}
Since $F_t\setminus \bar \Omega=\mathcal L_t\setminus \bar \Omega$, then by \eqref{3l}
\begin{equation}\label{eq:result3}
P(E_t,\Omega_o)\leq P(F_t,\Omega_o).
\end{equation}

Therefore \eqref{eq:result1},\,\eqref{eq:result2}, and \eqref{eq:result3} yield $P(E_t,\Omega)\leq P(F_t,\Omega)$ which together with the coarea formula conclude the proof of the theorem.
\qed

\subsection{Uniqueness of solution to (\ref{eq:partialLG})}\label{Sm}

In this section, we study uniqueness  of solutions to \eqref{eq:partialLG}, constructed in Theorem \ref{main}. For this purpose we keep assumptions of this theorem, i.e. strict convexity of $\Omega$ and continuity of $f$. Our basic  tool
is based on the observation that a solution is fully specified by its level sets and their `labeling', see Lemma \ref{p:8}  and Lemma \ref{flag}.  


Our first observation is concerned with the range of solutions to \eqref{eq:partialLG}. Recall that $\mathcal I=\overline{f(\Gamma)},$ we set $M=\sup_{\Gamma} f$, $m=\inf_{\Gamma} f$.

\begin{proposition}\label{p:range}
Let $u$ be a solution to \eqref{eq:partialLG}, then the range of $u$ is contained in $\mathcal I$, where the inclusion is up to measure $0$. \end{proposition}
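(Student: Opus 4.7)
The plan is the classical truncation argument. Fix a solution $u$ and set $\tilde u := (u \wedge M) \vee m$, the truncation of $u$ to $[m,M]$. Since $f = T_\Gamma u$ takes values in $[m,M]$ and the trace commutes with truncation by a constant, we still have $T_\Gamma \tilde u = f$, so $\tilde u$ is a competitor in \eqref{eq:partialLG}. Truncation cannot increase the total variation, so $\int_\Omega |D\tilde u| \le \int_\Omega |Du|$, and combined with the minimality of $u$ this inequality is in fact an equality.

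Next I would extract pointwise information via the coarea formula. One has
\begin{equation*}
\int_\Omega |Du| = \int_{-\infty}^{\infty} P(\{u>t\},\Omega)\,dt,\qquad \int_\Omega |D\tilde u| = \int_m^M P(\{u>t\},\Omega)\,dt,
\end{equation*}
where the second identity uses that $\{\tilde u>t\}$ coincides with $\{u>t\}$ for $m\le t<M$, equals $\Omega$ for $t<m$, and is empty for $t\ge M$ (the last two having zero relative perimeter in $\Omega$). Equality of the two integrals forces $P(\{u>t\},\Omega)=0$ for a.e.\ $t\notin[m,M]$. Connectedness of $\Omega$ (it is strictly convex) implies that $P(E,\Omega)=0$ entails $|E|=0$ or $|\Omega\setminus E|=0$, so for almost every such $t$ the superlevel set $\{u>t\}$ is either negligible or of full measure in $\Omega$.

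The final step is to rule out each bad alternative using the trace. If $\{u>t\}$ has full measure for some $t>M$, then $u\ge t$ a.e.\ on $\Omega$; since the $BV$ trace agrees $\mathcal H^1$-a.e.\ with the Lebesgue average of $u$ from inside, this forces $T_\Gamma u\ge t>M$ on $\Gamma$, contradicting $T_\Gamma u = f\le M$. Symmetrically, $\{u>t\}$ being null for some $t<m$ would give $u\le t<m$ a.e., contradicting $f\ge m$. Hence $m\le u\le M$ a.e.; and because $\Gamma$ is an arc and $f$ is continuous with one-sided limits at its endpoints, $f(\Gamma)\cup\{f(a),f(b)\}$ is a closed interval, so $\mathcal I=[m,M]$ and the range of $u$ is contained in $\mathcal I$ up to a null set.

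The only delicate point is the trace step: passing the a.e.\ inequality $u\ge t$ on $\Omega$ to an a.e.\ inequality on $\Gamma$. Once one invokes the Lebesgue-point characterization of $T_\Gamma$ for $BV$ functions, this is immediate; everything else is routine bookkeeping with the coarea formula and connectedness of $\Omega$.
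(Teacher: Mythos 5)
Your proof is correct and follows essentially the same route as the paper: truncate $u$ to $[m,M]$, observe the trace is unchanged, and invoke minimality. The paper simply asserts that positive measure of $\{u>M\}\cup\{u<m\}$ makes the truncation inequality strict, whereas you supply the justification via the coarea formula, connectedness of $\Omega$, and the Lebesgue-point characterization of the trace — a welcome filling-in of the step the paper leaves implicit.
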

 \proof If we set
 $v = \min\{M,\max\{m,u\}\}$, then $v\in BV(\Omega)$, and $v$ satisfies 
$$
\int_\Omega |Dv| \le \int_\Omega |Du|.
$$
Notice that the range of $v$ is contained in $\cI$. If $|\{ u>M\}\cup\{ u<m\}|>0$, then the inequality above is strict, which is a contradiction, because $T_\Gamma v=T_\Gamma u=f$. \qed

\medskip
We are interested in detecting fat level sets of solutions to \eqref{eq:partialLG}.

\begin{definition}
Let $u$ be a solution to \eqref{eq:partialLG}, and $\mathcal E_t=\{u\geq t\}$. For $t\in u(\Omega)$, we say that the $t$-level set of $u$ is {\it fat}  if
$$
\left|\bigcap_{s<t} \mathcal E_s \setminus \bigcup_{s>t} \mathcal E_s\right| >0.
$$
\end{definition}

\bigskip
We would like to gain insights into the solutions constructed in Section \ref{sec:SWZ}. For this purpose, we denote by $u_0$ the solution obtained by means of the Sternberg-Williams-Ziemer construction in Theorem \ref{main}, and $\mathcal E_t^0=\{x:u_0(x)\geq t\}$. Our goal is to give an explicit description of the sets $\d \mathcal E_t^0$ for all $t\in u_0(\Omega)$, when $f$ is continuous. Moreover, these sets will be determined only in terms of $f$, $\Omega$ and $\Gamma$. 
In other words, we will show that if $f$ satisfy a monotonicity condition, and $u$ is another  solution to \eqref{eq:partialLG}, $\partial\mathcal  E_t=\{u\geq t\}$, then  
\begin{equation}\label{r:row}
 \partial \mathcal E_t^0= \partial \mathcal E_t\qquad\hbox{for all}\ t.
\end{equation}
An important step toward a uniqueness proof is showing that (\ref{r:row}) implies
\begin{equation}\label{r:rsy}
 |\mathcal E_t^0\bigtriangleup\mathcal E_t | = 0,\qquad\hbox{for all }t\in \bR,
\end{equation} see Lemma \ref{p:8}.
From Lemma  \ref{flag} we will then deduce that 
$u_0=u$ a.e.

This plan is carried out in Theorem \ref{main2} for a few examples of continuous functions $f$, but we expect in future research to establish uniqueness results for a larger class of functions. Before we state this theorem, we introduce more notation aiming at capturing
the structure of solutions to the least gradient problems.
We will describe the geometric aspects of the SWZ construction solutions 
in terms of solutions to 
\begin{equation}\label{3di}
 d(x,\Upsilon) =  d(x,y),\qquad y\in\Upsilon,
\end{equation}
when $x\in\Gamma$ is given. We notice that since $\Upsilon$ is closed, then there exists a solution to (\ref{3di}), but it needs not to be unique.  
We also define
\begin{equation}\label{eq:defS}
 S:= \{x\in\Gamma:\hbox{ there exists }y\in \Upsilon^\circ
 \hbox{ such that }d(x,y) = d(x,\Upsilon)\},
\end{equation}
where $\Upsilon^\circ$ denotes the relative interior of $\Upsilon$. We also split $S$, 
\begin{equation}\label{eq:defU}
U= \{x\in S:\ \hbox{there is a unique solution to (\ref{3di})}\}
\end{equation}
and
\begin{equation}\label{eq:def}
D = \{x\in S:\ \hbox{there are at least two solutions to (\ref{3di})}\}.
\end{equation}

In order to study the structure of the set of solutions to(\ref{3di}), we 
define a map $\Phi: S\to P(\Upsilon)$  by
$$
\Phi(x) = \{ y\in \Upsilon:\ d(x,y)=d(x,\Upsilon)\}.
$$
\begin{proposition}\label{p:cc}
 Let $C$ be a connected component of $D$, then $C$ is a singleton, and hence $D$ is at most a countable set.
\end{proposition}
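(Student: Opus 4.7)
The strategy is to prove the stronger statement that every point $x_0 \in D$ is \emph{isolated} in $D$. This forces each connected component of $D$ to be a singleton, and since $\Gamma$ is second countable it forces $D$ itself to be at most countable.

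So fix $x_0 \in D$ and choose distinct $y_1, y_2 \in \Phi(x_0) \subset \Upsilon^\circ$ with common distance $r = d(x_0, \Upsilon)$. Smoothness of $\Upsilon$ together with the first-order optimality condition for the minimization of $|x_0 - \cdot|$ on $\Upsilon$ forces each segment $[x_0, y_i]$ to be orthogonal to $\Upsilon$ at $y_i$. Parametrizing $\Upsilon$ near $y_i$ as $\gamma_i(s)$ and applying the implicit function theorem to the orthogonality equation $(x - \gamma_i(s)) \cdot \gamma_i'(s) = 0$ at the point $(x_0, 0)$, one would produce smooth local projections $\pi_i : W \to \Upsilon$ on a neighborhood $W$ of $x_0$ with $\pi_i(x_0) = y_i$. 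Setting $d_i(x) = |x - \pi_i(x)|$, a direct computation gives $\nabla d_i(x_0) = (x_0 - y_i)/r$, and after shrinking $W$ one may ensure $d(x,\Upsilon) = \min\{d_1(x), d_2(x)\}$ for $x \in W$. Consequently $D \cap W = \Gamma \cap V$ where $V = \{d_1 = d_2\}$, and since $y_1 \neq y_2$ the gradient $\nabla(d_1 - d_2)(x_0) = (y_2 - y_1)/r$ is nonzero, so $V$ is a smooth curve through $x_0$ whose tangent is parallel to $(y_2 - y_1)^\perp$.

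It remains to show that $V$ and $\Gamma$ meet transversally at $x_0$, and this is where strict convexity of $\Omega$ enters. Otherwise the tangent $\tau_0$ to $\Gamma$ at $x_0$ would be parallel to $(y_2 - y_1)^\perp$, hence perpendicular to $y_2 - y_1$. Since $x_0$ is equidistant from $y_1$ and $y_2$ it already lies on the perpendicular bisector $\ell$ of $[y_1, y_2]$, whose direction is exactly $(y_2 - y_1)^\perp$; thus the tangent line to $\partial\Omega$ at $x_0$ would coincide with $\ell$. But strict convexity forces $\partial\Omega \setminus \{x_0\}$ to lie strictly on one open side of $\ell$, whereas $y_1, y_2 \in \partial\Omega \setminus \{x_0\}$ are reflections of each other across $\ell$ and therefore lie on opposite sides -- a contradiction. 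Hence $V \pitchfork \Gamma$ at $x_0$, and $D \cap W = \{x_0\}$.

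The step I expect to be most delicate is the smoothness of the local projections $\pi_i$: the implicit-function computation requires that $x_0$ not be a focal point of $\Upsilon$ at $y_i$, i.e.\ $r \neq 1/\kappa(y_i)$. I would expect this to be rulable out by a short geometric argument using that both $x_0$ and $y_i$ lie on the strictly convex curve $\partial\Omega$; alternatively the degenerate focal configuration can be handled separately by a direct competing-chord comparison, using that in that case $x_0$ admits arbitrarily close competitors $y'$ on $\Upsilon$ with $|x_0 - y'|$ second-order comparable to $r$, which contradicts $y_1, y_2$ being isolated minimizers.
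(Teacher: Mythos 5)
Your route is genuinely different from the paper's: you try to prove the stronger statement that $D$ is \emph{discrete} by a local differential analysis (implicit function theorem for the nearest-point projections, transversality of the bisector curve $\{d_1=d_2\}$ with $\Gamma$), whereas the paper argues globally, assigning to each point $p$ of a nondegenerate component $C$ the arc of $\Upsilon$ spanned by two extreme points of $\Phi(p)$, showing these arcs are pairwise disjoint of positive length, and deriving a contradiction from the uncountability of $C$ and the finite length of $\Upsilon$. Your transversality step is correct and rather elegant: if the tangent line to $\partial\Omega$ at $x_0$ were the perpendicular bisector of $[y_1,y_2]$, strict convexity would put $y_1,y_2$ on one open side of it while symmetry puts them on opposite sides. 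However, the step you yourself flag is a genuine gap, not a technicality. The focal configuration $r=1/\kappa(y_i)$ cannot be excluded by strict convexity: the centre of curvature of a strictly convex curve can lie on the curve itself (for a thin ellipse the centre of curvature is interior near the flat ends and exterior near the sharp ends, so by continuity it crosses the boundary). And your fallback does not close it: at a degenerate critical point of $s\mapsto|x_0-\gamma(s)|$ the nearby competitors have distance $r+O(s^4)$, which is perfectly consistent with $y_i$ being a strict isolated minimizer, so no contradiction arises. In the focal case the bisector set need not be a smooth curve near $x_0$ and your local description of $D$ collapses.

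A second gap is the reduction $d(x,\Upsilon)=\min\{d_1(x),d_2(x)\}$ on a neighbourhood of $x_0$, which presumes $\Phi(x_0)=\{y_1,y_2\}$. Nothing in the hypotheses forces $\Phi(x_0)$ to have exactly two elements; it is only known to be a nonempty compact subset of $\Upsilon$, and it can even be infinite (a circular subarc of $\Upsilon$ centred at $x_0$ is compatible with strict convexity). Finitely many nearest points could be absorbed by running your argument over all pairs, but an infinite $\Phi(x_0)$ defeats the whole local picture. The paper's proof sidesteps both issues by never differentiating anything: it only uses that two points realizing $\mathrm{diam}\,\Phi(p)$ exist and that a point of $\Upsilon$ strictly inside the curvilinear triangle $\bigtriangleup_{qy_1y_2}$ would beat the minimizers for $q$. (A smaller point: you also need a genuine tangent line to $\Gamma$ at $x_0$, which Lipschitz regularity does not give; for a convex boundary this can be repaired with supporting lines, but it should be said.) As it stands, your argument proves the proposition only under additional nondegeneracy assumptions on $x_0$.
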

\proof Assume otherwise, i.e. suppose there exist  $p$ and $q$, distinct points in $C$. 
Since $C$ is a one dimensional connected set, then the arc $\overline{pq}$ must  be contained in $C$.

We take $x_1, x_2\in \Phi(p)$ and $y_1, y_2\in \Phi(q)$ such that 
\begin{equation}\label{r:g}
 d(x_1, x_2) = \hbox{diam}\,\Phi(p),  \qquad d(y_1, y_2) = \hbox{diam}\,\Phi(q).
\end{equation}
We claim that the arcs $\overline{x_1x_2}$ and $\overline{y_1y_2}$  may not intersect. Let us suppose they do, then 
we have two possibilities:\\
1) the intersection point is in the relative interior of the arcs $\overline{x_1x_2}$ and $\overline{y_1y_2}$;\\
2) the intersection point is one endpoint, say $x_2$.

In the first case, one endpoint of  $\overline{x_1x_2}$, say  $x_2$ is in the interior of $\bigtriangleup_{qy_1y_2}$, the curvilinear triangle with vertices $q$, $y_1$ and $y_2$,  where $y_1$ and  $y_2$ are connected by the circle arc centered at $q$. Notice that, since $d(q,\Upsilon^\circ)=d(q,y_1)=d(q,y_2)$, then the disc of center $q$ passing through $y_1,$ and $y_2$ contains $\bigtriangleup_{qy_1y_2}$. This implies that 
$$
d(q,x_2) < d(q, y_1) = d(q, y_2),
$$
contrary to the definition of $y_1$ and  $y_2$.

In the second case, we consider any point $s$ belonging to the arc $\overline{pq}$, different from $p$ and $q$. By our assumption $s\in C$, hence $\Phi(s)$ contains at least two different points, $z_1$ and $z_2$ and
$$
d(s,z_1) = d(s,z_2),
$$
and the arc  $\overline{z_1z_2}$ must intersect the interior of at least one curvilinear triangles, $\bigtriangleup_{px_1x_2}$ or $\bigtriangleup_{qy_1y_2}$. Let us suppose that $z_2$ is in the interior of
$\bigtriangleup_{qy_1y_2}$. Then,
$$
d(q,z_2) < d(q, y_1) = d(q, y_2).
$$
This is again contrary to the definition of $y_1$ and  $y_2$.
  

  
Thus, we proved that if $p,q\in C$, then the corresponding arcs $\overline{x_1x_2}$ and $\overline{y_1y_2}$, which have positive length, do not intersect. Since the number of elements of $C$ is not countable, we infer that the total length of the corresponding arcs is infinite, thus we reached a contradiction. \qed
 



\bigskip
Now, we are ready to state the main result of this section.
\begin{theorem}\label{main2}  Let us suppose that the assumptions of Theorem \ref{main} hold and $u_0$ is a solution given by Theorem \ref{main}.\\
 1) If $f(a) = f(b) = \inf_{\d\Omega}f$, $f$ has a strict maximum at $x_M$ and $f$  attains each value (except $\max_{\d\Omega} f$) exactly twice,
 then $u_0$ is discontinuous, unique 
 and there is $\tau\in(\inf_{\d\Omega} f, \max_{\d\Omega} f)$ such that the $\tau$-level set is fat.\\
 2) If $f$ is monotone, 
 then $u_0$ is discontinuous. 
 There is at least one $\tau$, an element of $u(\Phi(D))$, 
 such that the $\tau$-level set is fat. Moreover, $u_0$ is a unique solution in the class of solutions continuous in $\Omega$.\\
 3) Let us suppose that $f(a) = f(b)$ and  
 there are $x_m \in\Gamma$ a unique local minimum, $x_M \in\Gamma$ a unique local maximum, and $x_0\in D\subseteq\Gamma$ such that
 $f(x_0) = f(a) = f(b)$,
 \begin{equation}\label{A}
  d(x_0,\Upsilon) =  d(x_0,a) =  d(x_0,b).
 \end{equation}
Moreover, we assume that $f$ restricted to arcs $\overline
{ax_m}$, $\overline
{x_mx_M}$, $\overline
{x_Mb}$ is one-to-one.
Then, $u_0$ is continuous, unique and the $f(a)$-level set is fat.
\end{theorem}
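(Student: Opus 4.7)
My proof of Theorem~\ref{main2} treats the three cases uniformly. First I describe the geometry of the superlevel sets $\mathcal{E}_t^0 = \{u_0 \ge t\}$ of the SWZ solution, and then I use Lemma \ref{p:8} and Lemma \ref{flag} to translate the identification of reduced boundaries into an $L^1$ identification of $u_0$ with any competitor. The shared toolkit consists of three structural facts from Section~\ref{sec:SWZ}: each connected component of $\partial E_t^0 \cap \Omega$ is a line segment; its endpoints lie either in $f^{-1}(t) \subset \Gamma$ (Lemma~\ref{le2}) or on $\Upsilon^\circ$, meeting $\Upsilon$ orthogonally (Lemma~\ref{l:ort}); and the area-maximization step picks the outermost admissible configuration whenever several perimeter-minimizers compete.

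For part~1), the in-$\Omega$ perimeter cost admits two candidate configurations: the single chord $\overline{p(t)q(t)}$ joining the two preimages of $t$, and the ``full'' configuration where $E_t \cap \Omega = \Omega$ and the perimeter is paid instead as a jump along $\Gamma \cap \{f < t\}$. The two costs are continuous in $t$; the full configuration is cheaper for $t$ near $f(a)$ (where $\{f<t\}\cap \Gamma$ is short) while the chord is cheaper for $t$ near $\max f$, so the costs cross at a unique transition level $\tau \in (f(a), \max f)$. At $t = \tau$ the area-maximization step selects the full configuration, so $A_t = \bar\Omega$ for $t \le \tau$ and $A_t$ is the chord region for $t > \tau$; the difference, the region between $\overline{p(\tau)q(\tau)}$ and $\Upsilon$, is a fat level set of height $\tau$. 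The discontinuity of $u_0$ appears at $a$ and $b$: interior limits through the fat set give $\tau$, whereas limits along $\Gamma$ give $f(a) = f(b) < \tau$. Uniqueness is then immediate, since the chord family $\{\overline{p(t)q(t)}\}_{t > \tau}$ and the transition $\tau$ depend only on $f$.

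For part~2), monotonicity of $f$ reduces $f^{-1}(t)$ to a single point $x(t)$, and Lemma~\ref{l:ort} forces $\partial E_t^0 \cap \Omega$ to be the segment from $x(t)$ to its nearest foot $y(t) \in \Upsilon$. By Proposition~\ref{p:cc} the set $D$ of points at which two feet compete is at most countable; at each $x \in D$ the SWZ maximization selects the outermost of the two admissible chords, and the triangular region bounded by the two competing chords and the arc of $\Upsilon$ between their endpoints is a fat level set at height $f(x) \in u_0(\Phi(D))$, across whose inner chord $u_0$ jumps. Any competitor $u$ continuous in $\Omega$ must, by Lemma~\ref{leo} and continuity, make the same outermost selection at every $x \in D$, so its reduced boundaries coincide with those of $u_0$ and Lemma~\ref{p:8} together with Lemma~\ref{flag} give uniqueness in that class.

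For part~3), the hypotheses partition $\Gamma$ into three arcs on each of which $f$ is monotone, producing three chord families. At the critical level $t_0 := f(a) = f(b) = f(x_0)$ the only preimage of $t_0$ in $\Gamma$ is $x_0$, and hypothesis~\eqref{A} forces $\Phi(x_0) = \{a, b\}$; both chords $\overline{x_0 a}$ and $\overline{x_0 b}$ belong to $\partial E_{t_0}^0$, and together with $\Upsilon$ they enclose a region $R$ of positive area on which $u_0 \equiv t_0$, the fat $f(a)$-level set. Continuity at $a$ and $b$ is now restored because the interior limit through $R$ agrees with the trace limit $f(a) = f(b)$ along $\Gamma$; on the side of $x_0$ away from $\Upsilon$ the chord family varies continuously in $t$ since the foot map on each monotonicity arc is single-valued and continuous off $\{x_0\}$. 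Uniqueness follows by the same level-set-matching argument as in the previous parts. I expect the main difficulty to lie in the perimeter comparison that pins down $\tau$ in part~1) and in verifying that the outermost-chord selection at the points of $D$ is the unique choice compatible with a continuous-in-$\Omega$ competitor in parts~2) and~3); once these are settled, the common machinery of Lemma~\ref{p:8} and Lemma~\ref{flag} closes each case.
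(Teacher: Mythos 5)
Your treatment of Parts 2) and 3) follows essentially the paper's route (segment-plus-foot structure from Lemmas \ref{le2} and \ref{l:ort}, countability of $D$ from Proposition \ref{p:cc}, the curvilinear triangle as the fat level set, and closure via Lemmas \ref{p:8} and \ref{flag}), modulo two details you leave unaddressed: in Part 2) you must still exhibit at least one point of $D$ (the paper produces $x_0\in\Gamma$ with $d(x_0,a)=d(x_0,b)$), and in Part 3) you need the triangle-inequality argument showing that for $t\neq f(a)$ no component of $\partial\mathcal E_t$ can reach $\Upsilon$ (by \eqref{A} it would have to pass through $a$ or $b$, and then $d(x^1_t,x^2_t)<d(x^1_t,c)+d(x^2_t,c)$ contradicts minimality); without this, continuity of $u_0$ up to $\Upsilon$ is not established.

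The genuine error is in Part 1): you misidentify the configuration competing with the chord $[x^t,y^t]$. You propose the ``full'' configuration $E_t\cap\Omega=\Omega$ with perimeter paid along $\Gamma\cap\{f<t\}$, and you locate $\tau$ by comparing the chord length with $\mathcal H^1(\Gamma\cap\{f<t\})$. But such a set is never a perimeter minimizer of \eqref{3l}: by Lemma \ref{le2} (equivalently, by a direct cut-and-replace argument) $\partial E_t\cap\Gamma\subseteq f^{-1}(t)$, which here consists of two points, so $E_t$ cannot carry a positive-length jump along $\Gamma$. The actual competitor is the pair of segments running from $x^t$ and $y^t$ to the free part of the boundary --- namely $[x^t,a]\cup[y^t,b]$ for $t$ near $f(a)$, and more generally $[x^t,p^t]\cup[y^t,q^t]$ with $p^t,q^t\in\Upsilon$ --- with cost $d(x^t,a)+d(y^t,b)$ (resp.\ the distances to $\Upsilon$). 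The transition level is then the unique zero of $h(t)=d(x^t,a)+d(y^t,b)-d(x^t,y^t)$, which is in general a different number from your $\tau$. Consequently your claims that $A_t=\bar\Omega$ for $t\le\tau$ and that the fat level set is the whole region between $[x^\tau,y^\tau]$ and $\Upsilon$ are false: for $f(a)<t<\tau$ the sets $A_t$ already exclude the two corner regions near $a$ and $b$ foliated by the segments $[x^t,p^t]$, $[y^t,q^t]$, and the fat $\tau$-level set is $\Omega\cap H^+(x^\tau,y^\tau)$ with those two corners removed. The discontinuity of $u_0$ likewise does not occur as a jump between $\tau$ and $f(a)$ along all of $\Gamma\cap\{f<\tau\}$, but is concentrated at $a$ and $b$, where the segments $[x^t,a]$ carrying distinct values $t$ accumulate. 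The uniqueness argument for Part 1) survives only after the level-set structure is corrected, since Lemma \ref{p:8} requires the true family $\{\partial\mathcal E^0_t\}$.
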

\begin{remark}\rm 
 We can change `min' to `max' in this theorem to get a corresponding result. We notice that  function $f$ on an arc may be increasing or decreasing depending on a parametrization of arc $\Gamma$. The same claim in 3) holds if we require
$\Phi(D) \cap \{a,b\}=\emptyset.$

One might expect that in case 1), the least gradient function should take value $\inf_{\d\Omega}f$ on $\Upsilon$, but it turns out that forming discontinuities decreases the energy $\int_\Omega|Du|$.
\end{remark}

\bigskip
We want to present here  different types of behavior of $f$ near the endpoints of $\Upsilon$. We obtain quite detailed information about solutions $u_0$ due to the explicit construction of $\d \mathcal E_t^0$, what enables us to study the level sets, $\mathcal E_t^0=\{ u_0(x) \ge t \}$.

The main claim is uniqueness of solutions. Detecting fat level sets is a by-product. We proceed in two stages. Firstly,
we discover the structure of $\d \mathcal E_t^0$, which depends only on the geometry of $\d\Omega$ and properties of $f$. Then, we show that we can reconstruct $u_0$ and any other solution from this information. 

Before we study the special cases in Theorem \ref{main2}, we present a more general uniqueness argument, which will be applicable also in Section \ref{s:re}. 

\begin{lemma}\label{p:8} Let us assume that $u_0$ and $u$ are solutions to \eqref{eq:partialLG}, where $u_0$ is constructed in Theorem \ref{main}, i.e. $u_0$ is continuous on $\Omega \cup \Gamma$. Recall $\mathcal E_t = \{ x\in \Omega: \ u(x)\ge t\}$, $\mathcal E_t ^0= \{ x\in \Omega: \ u_0(x)\ge t\}$.
If $\partial \mathcal E_t =\partial \mathcal E_t^0$ for all $t$, then $|\mathcal E_t \bigtriangleup \mathcal E_t^0|= 0$ for all $t\in\bR$.
\end{lemma}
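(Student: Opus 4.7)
The plan is to argue on connected components of $\Omega\setminus\partial\mathcal E_t$. Because $\partial\mathcal E_t = \partial\mathcal E_t^0$, the measures $D\chi_{\mathcal E_t}$ and $D\chi_{\mathcal E_t^0}$ are both supported in this common set; hence on every component $U$ the functions $\chi_{\mathcal E_t}$ and $\chi_{\mathcal E_t^0}$ are constant, and via the representative \eqref{e:des} they are the pointwise constant values on $U$. It therefore suffices to rule out the two disagreement configurations: \emph{primal bad}, $U\subseteq\mathcal E_t$ and $U\cap\mathcal E_t^0=\emptyset$, and \emph{dual bad}, $U\cap\mathcal E_t=\emptyset$ and $U\subseteq\mathcal E_t^0$.

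I would first handle the primal case. Here $u\ge t$ a.e.\ on $U$, while continuity of $u_0$ gives $u_0<t$ pointwise on $U$. The key geometric observation is that for every $s<t$ the inclusion $\mathcal E_t\subseteq\mathcal E_s$ combined with openness of $U$ forces $U$ into the topological interior of $\mathcal E_s$, so $U\cap\partial\mathcal E_s=U\cap\partial\mathcal E_s^0=\emptyset$. Then I split into three sub-cases. (i) If $u_0|_U$ is non-constant, then $u_0(U)$ is a non-degenerate interval contained in $[\inf_U u_0,t)$; for any $s$ in its interior there is $x\in U$ with $u_0(x)=s$ approached by both $\{u_0>s\}$ and $\{u_0<s\}$, so $x\in\partial\mathcal E_s^0\cap U$, contradicting the key observation. (ii) If $u_0\equiv c<t$ on $U$ and $\partial U\cap\Omega\ne\emptyset$, continuity from inside $U$ yields $u_0\equiv c$ on $\partial U\cap\Omega$, yet every point of $\partial U\cap\Omega\subseteq\partial\mathcal E_t^0$ lies in both $\mathcal E_t^0$ and $\overline{\{u_0<t\}}$ and so satisfies $u_0=t$ — contradiction. (iii) If $\partial U\cap\Omega=\emptyset$, then $U$ is clopen in the connected $\Omega$, so $U=\Omega$; the trace identities then force $f\equiv t$ on $\Gamma$, whereupon the construction of Theorem \ref{main} outputs $u_0\equiv t$, contradicting $u_0<t$.

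For the dual case I would pass to the component $V$ of $\Omega\setminus\partial\mathcal E_t$ adjacent to $U$ across a point of $\partial U\cap\Omega$ (which exists unless $U=\Omega$). Since $\partial U\cap\Omega\subseteq\partial\mathcal E_t=\partial\mathcal E_t^0$ separates both $\mathcal E_t$ from $\mathcal E_t^c$ and $\mathcal E_t^0$ from $(\mathcal E_t^0)^c$, the adjacent $V$ automatically lies in $\mathcal E_t\cap(\mathcal E_t^0)^c$, i.e.\ $V$ is primal bad, already ruled out. The residual possibility $U=\Omega$ again yields $f\equiv t$ and $u_0\equiv t$, while any competitor $u$ with $u<t$ a.e.\ on $\Omega$ and trace $t$ on $\Gamma$ must be non-constant, so $\int_\Omega|Du|>0=\int_\Omega|Du_0|$, violating minimality. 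I expect the main obstacle to be sub-case (ii) above: extracting $u_0=t$ on $\partial\mathcal E_t^0$ from the continuity of $u_0$ and the topological-boundary hypothesis is what ultimately closes the argument.
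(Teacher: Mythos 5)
Your route is genuinely different from the paper's. The paper argues pointwise with density points: outside the fat level sets every point of $\Omega$ lies on exactly one $\partial\mathcal E_\tau^0=\partial\mathcal E_\tau$, and a set of positive measure all of whose points lie on lower-level boundaries cannot consist of density points of $\mathcal E_{t_0}$; fat level sets are treated in a separate step. Your decomposition into connected components of $\Omega\setminus\partial\mathcal E_t$ is a clean alternative, and your \emph{primal} case is essentially complete: sub-case (i) works once you extract the boundary point by a first-exit argument along a path in $U$ (connected open sets in the plane are path-connected), and sub-cases (ii) and (iii) are fine --- in particular (ii), which you flag as the main obstacle, is immediate from continuity of $u_0$, since $u_0\equiv t$ on $\partial\mathcal E_t^0\cap\Omega=\partial\{u_0\ge t\}\cap\Omega$.

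The genuine gap is in the \emph{dual} case. The assertion that the component $V$ on the other side of $\partial U\cap\Omega$ ``automatically lies in $\mathcal E_t\cap(\mathcal E_t^0)^c$'' does not follow from the separation statement alone: a priori several components may meet at $y\in\partial U\cap\Omega$, the component carrying points of $\mathcal E_t$ need not be the one carrying points of $\{u_0<t\}$, and an adjacent component contained in $\mathcal E_t\cap\mathcal E_t^0$ is an \emph{agreeing} component and yields no contradiction. To close this you must use the structure of $\partial\mathcal E_t$: by Lemma \ref{bombieri} it is a union of pairwise disjoint chords of $\Omega$ of finite total length, hence locally finite in $\Omega$, so a small ball $B(y,r)$ is split by a single chord into exactly two components, one inside $U$. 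Then (a) since points of $\mathcal E_t$ are points of positive upper density by \eqref{e:des} and the chord is Lebesgue-null, $y\in\overline{\mathcal E_t}$ forces the other component $V$ into $\mathcal E_t$; and (b) since $\{u_0<t\}$ is open and misses $U$, $y\in\partial\mathcal E_t^0$ forces $V\cap\{u_0<t\}\neq\emptyset$, hence $V\cap\mathcal E_t^0=\emptyset$. Only with both (a) and (b) is $V$ primal bad. (An alternative repair avoiding the two-sided analysis: the symmetric version of (i)--(ii) reduces the dual case to $u_0\equiv t$ on $U$, whence $U\cap\partial\mathcal E_s=\emptyset$ for \emph{all} $s$, so by coarea $u\equiv c'<t$ a.e.\ on $U$; for $s\in(c',t)$ the component of $\Omega\setminus\partial\mathcal E_s$ containing $U$ is again dual bad at level $s$, forcing $u_0\equiv s$ on $U$ --- a contradiction.) As written, the dual case is asserted rather than proved.
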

\proof {\it Step 1.}
Let us first assume that $u_0$ has no fat level sets, so each $x\in\Omega$ belongs to $\partial \mathcal E_t^0$ for an appropriate, unique $t$.  We take any $x$, which is a density point of $\mathcal E_t^0$. Thus, $s=u_0(x)>t$ and due to continuity of $u_0$ there exists such a ball $B(x,r)$, that for all points $y\in B(x,r)$ we have 
$u_0(y)> \frac{s+t}2$. Subsequently, we notice
\begin{equation}\label{r:bs}
B(x,r) = \bigcup_{\tau > \frac{s+t}2} B(x,r)\cap \partial \mathcal E_\tau^0
= \bigcup_{\tau > \frac{s+t}2} B(x,r)\cap \partial \mathcal E_\tau.
\end{equation}
This is so, because each $x\in\Omega$ belongs to some $\d \mathcal E_\tau^0$ and
$\partial \mathcal E_t=\partial \mathcal E_t^0$ holds for all $t$. Formula (\ref{r:bs}) also implies that $x$ is the density point of $\mathcal E_t$.
This argument shows that
$\mathcal E_t^0 \subseteq \mathcal E_t$ up to a measure zero set. We shall see that
for all $t$, the measure $\mathcal E_t \setminus \mathcal E_t^0$ must be zero. Let us suppose otherwise, so there is $t_0$ such that $|\mathcal E_{t_0} \setminus \mathcal E_{t_0}^0| > 0.$ If $x$ is in $V:=\mathcal E_{t_0} \setminus \mathcal E_{t_0}^0$, then $u(x) \ge t_0$ while $u_0(x)<t_0$. We notice that
$$
V= \bigcup_{s<t_0} V \cap \partial \mathcal E_s^0,
$$
due to lack of fat level sets through each point of $\Omega$, exactly one segment $\partial \mathcal E_s^0$ passes.
As a result, 
we have
$$
V= \bigcup_{s<t_0} V \cap \partial \mathcal E_s.
$$
Since $V$ has a positive measure, the above statement implies that no point in $V$ is a density point of $\mathcal E_{t_0}$. We reached a
 contradiction, our claim follows. 

{\it Step 2.} Now, we consider the case, when there is a fat $t_0$-level set. Let us suppose that 
$x$ is  a density point of $\mathcal E_{t_0}^0$. There are two possibilities:

(a) There is $r>0$ such that $u_0$ restricted to $B(x,r)$ equals $t_0$. Then, by assumption for all $s\neq t_0$, we have $\d \mathcal E_s^0\cap B(x,r)=\emptyset.$ Thus, for all $s\neq t_0$, we have $\d \mathcal E_s\cap B(x,r)=\emptyset.$ So, $t_0$-level set of $u$ is fat too.

(b) For all $r>0$, we have 
$$
0< |\{u_0>t_0\} \cap B(x,r)| / |B(x,r)| <1.
$$
This implies existence of a sequence $x_n \to x$ and $u_0(x_n) = s_n \to t_0$. By continuity of $u_0$, we have that $u_0> \frac{s_n+t_0}{2}$ in a ball $B(x_n,\rho_n)$. We conclude $\cE^0_{t_0} \subset \cE_{t_0}$.

Now, we have to show that $\cE_{t_0} \setminus \cE^0_{t_0}=\emptyset$. If this does not happen, then there is $x_0\in \cE_{t_0} \setminus \cE^0_{t_0}$, this implies that $x_0\in\cE^0_\tau$ for a $\tau <t$. Then, we consider the cases:
1) $x_0\in\d\cE^0_\tau$,
2) $\cE^0_\tau$ is fa level set.

Using the argument of the first part of the proof, we get a contradiction with $u(x_0)\ge t$.
We conclude that $\cE^0_{t_0} = \cE_{t_0}$.

The argument above tells us that the fat level sets of $u_0$ and $u$ are the same.
\qed

\begin{lemma}\label{flag}
 Let us suppose that $v,w\in BV(\Om)$. Then, $v=w$ a.e. if and only if
 $$
 |\{ v\ge t\} \bigtriangleup\{ w\ge t\} | = 0,\qquad\hbox{for all }t\in \bR,
 $$
 where $A\bigtriangleup B$ denotes the symmetric difference of sets $A$ and $B$.
\end{lemma}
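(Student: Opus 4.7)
The forward implication is essentially immediate. If $v = w$ off a null set $N$, then for every $t\in\bR$ the symmetric difference $\{v\ge t\}\bigtriangleup\{w\ge t\}$ is contained in $N$, and so has measure zero. The $BV$ hypothesis plays no role in either direction; the statement is a general fact about measurable functions.

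For the converse, the natural approach is proof by contradiction via a rational-density argument. Assume that $|\{v\neq w\}|>0$. Then at least one of the sets $\{v>w\}$ or $\{w>v\}$ has positive measure, and by symmetry we may suppose $|\{v>w\}|>0$. For each $x$ with $v(x)>w(x)$, the density of $\bQ$ in $\bR$ produces a rational $q = q(x)$ with $w(x)<q\le v(x)$, which means precisely that $x\in \{v\ge q\}\setminus\{w\ge q\}\subseteq \{v\ge q\}\bigtriangleup\{w\ge q\}$. Consequently
\begin{equation*}
\{v>w\}\;\subseteq\;\bigcup_{q\in\bQ}\bigl(\{v\ge q\}\bigtriangleup\{w\ge q\}\bigr).
\end{equation*}
By hypothesis each set on the right has measure zero, and the union is countable, so $|\{v>w\}|=0$, contradicting our assumption.

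Alternatively one could base the proof on the layer cake representation $v(x)=\int_0^\infty\chi_{\{v\ge t\}}(x)\,dt-\int_{-\infty}^0\chi_{\{v<t\}}(x)\,dt$ (and the analogous formula for $w$), so that equality of all superlevel sets up to null sets forces $v=w$ pointwise a.e. via Fubini. The rational argument above, however, is shorter and avoids integrability issues, so that is the route I would take. There is no real obstacle; the only point requiring a moment of care is noting that it suffices to test rational levels, which is what makes the union of exceptional sets countable and hence null.
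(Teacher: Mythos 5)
Your proof is correct. The forward implication is immediate, as you say, and your rational-density argument for the converse --- covering $\{v>w\}$ by the countable union $\bigcup_{q\in\mathbb{Q}}\bigl(\{v\ge q\}\setminus\{w\ge q\}\bigr)$ of null sets, and likewise for $\{w>v\}$ --- is exactly the right way to close the argument. In fact this is where you do somewhat better than the paper: its own ``conversely'' paragraph assumes that some $\{v\ge t\}\setminus\{w\ge t\}$ has positive measure and deduces that $v$ and $w$ differ on a set of positive measure, which is again the forward implication in contrapositive form rather than the converse. The genuinely needed step --- passing from ``$v\ne w$ on a set of positive measure'' to ``the superlevel sets at \emph{some} level differ on a set of positive measure'' --- is precisely the countability-over-rational-levels observation you supply, and it is the only point in the lemma requiring any care. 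You are also right that the $BV$ hypothesis plays no role; the statement holds for arbitrary measurable functions. The layer-cake alternative you mention would work but, as you note, needs integrability bookkeeping that the rational argument avoids.
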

\proof If $v=w$ a.e. then the superlevel sets must coincide, up to a negligible set.

Conversely, let us assume that $A:= \{ v\ge t\} \setminus\{ w\ge t\}$ has a positive measure. Then, for all $x\in A$, we have
$$
v(x)\ge t, \qquad\hbox{but}\qquad w(x)<t.
$$
Thus, $v$ and $w$ differ on a set of positive measure. The same argument shows that $\{ v\ge t\} \setminus\{ u\ge t\}=0.$
\qed

\medskip

We would like to establish an analogue of Lemma \ref{le2}, with $f$ and $\Omega$ as in the statement of Theorem \ref{main},
for solutions, which are not necessarily continuous.
If 
$u$ is such a solution to \eqref{eq:partialLG}, 
then Proposition \ref{p:range} implies that $u(x) \in[\inf_\Gamma f,\sup_\Gamma f]$ for a.e. $x$. 

Let us set $\mathcal E_t=\{u\ge t\}$. We  know that $\d\mathcal E_t$ are 
minimal surfaces. They must intersect $\d\Omega$, i.e. $\d \mathcal E_t\cap\d\Omega\neq\emptyset$. A given $\d \mathcal E_t$ is a sum of intervals and none of their endpoints may belong to $\Omega$. If any of them did, e.g.
$x^t\in \d \mathcal E_t$ is in $\Omega$, then at this point 
$\mathcal E_t$ has zero density. But this is impossible due to our convention \eqref{e:des}.

\begin{lemma}\label{l:st}
If $u$ is any solution to \eqref{eq:partialLG}, 
then
$$
\d \mathcal E_t\cap \Gamma \subseteq f^{-1}(t).
$$
\end{lemma}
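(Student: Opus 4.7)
The plan is to adapt the pointwise computation used in the proof of Theorem \ref{main} (there applied to $\partial^* F_t \cap \Gamma$) to the topological boundary, exploiting the structural description of $\partial \mathcal{E}_t$ recalled just before the statement.

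I would argue by contradiction. Fix $x_0 \in \partial \mathcal{E}_t \cap \Gamma$ and suppose $f(x_0) \neq t$. By symmetry (replacing $\mathcal{E}_t$ by $\Omega \setminus \mathcal{E}_t$ if necessary) we may assume $f(x_0) = t - \varepsilon$ for some $\varepsilon > 0$. Since $\partial \mathcal{E}_t$ is a union of segments with endpoints on $\partial \Omega$, and by convention \eqref{e:des} none of these endpoints can lie in $\Omega$, one such segment $\gamma$ must have $x_0$ as its endpoint. Locally, $\gamma$ separates $B(x_0,r) \cap \Omega$ into two subregions of positive measure, one contained in $\mathcal{E}_t$ and one in its complement, which yields the density bound
\begin{equation*}
 \liminf_{r \to 0} \frac{|\mathcal{E}_t \cap B(x_0,r) \cap \Omega|}{|B(x_0, r) \cap \Omega|} > 0.
\end{equation*}

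Next I would bring in the trace condition $T_\Gamma u = f$ in its integral form. The $BV$ trace theorem gives, for $\mathcal H^1$-a.e.\ $y \in \Gamma$,
\begin{equation*}
 \lim_{r \to 0} \fint_{B(y,r) \cap \Omega} |u(z) - f(y)|\,dz = 0,
\end{equation*}
and continuity of $f$ at $x_0$ allows one to upgrade this to the pointwise identity
\begin{equation*}
 \lim_{r \to 0} \fint_{B(x_0,r) \cap \Omega} |u(z) - f(x_0)|\,dz = 0
\end{equation*}
by approximating $x_0$ with Lebesgue trace points $y_n \to x_0$ in $\Gamma$ and comparing averages on $B(y_n,r) \cap \Omega$ and $B(x_0,r) \cap \Omega$ using $|f(y_n) - f(x_0)| \to 0$. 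Since $u \ge t$ on $\mathcal{E}_t$ and $t - f(x_0) = \varepsilon$, combining the two gives
\begin{equation*}
 \fint_{B(x_0,r) \cap \Omega} |u(z) - f(x_0)|\,dz \;\geq\; \varepsilon \cdot \frac{|\mathcal{E}_t \cap B(x_0,r) \cap \Omega|}{|B(x_0,r) \cap \Omega|},
\end{equation*}
whose right-hand side has positive $\liminf$ by the density bound, contradicting the vanishing of the left-hand side.

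The principal technical point is the upgrade from the $\mathcal H^1$-almost-everywhere Lebesgue trace identity to a genuinely pointwise statement at the specific point $x_0$. This is the same delicate step that is implicit in the proof of Theorem \ref{main}, where the analogous conclusion was drawn at $\mathcal H^1$-a.e.\ point of $\partial^* F_t \cap \Gamma$; here the continuity of $f$ at $x_0$ is the essential ingredient that makes the strengthening possible. Once this is granted, the rest of the argument is a direct transcription of the density-contradiction from the proof of Theorem \ref{main}.
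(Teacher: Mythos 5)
Your argument is correct and is essentially the paper's own proof, just unpacked more carefully: the paper likewise uses the Lebesgue-point form of the trace at $x_0$ together with the positive density of $\mathcal E_t$ at $x_0$ (forced by convention \eqref{e:des}) and the bound $|u-f(x_0)|\ge\varepsilon$ on $\mathcal E_t$ to contradict the vanishing of the trace average. The technical point you flag --- upgrading the $\mathcal H^1$-a.e.\ trace identity to the specific point $x_0$ via continuity of $f$ --- is asserted without comment in the paper as well, so your version is, if anything, the more explicit one.
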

\proof
Indeed, if $x_0\in \d \mathcal E_t\cap \Gamma$, then
$$
f(x_0) = \lim_{r\to 0} \fint_{B(x_0,r)\cap\Omega} u(y)\,dy.
$$
If $f(x_0)<t$ were true, then there would exist $\rho>0$ such for all $x\in B(x_0,\rho)\cap \Gamma$ we have $f(x)<t$. Hence, we could deduce from existence of the trace that function $u$ assumes values smaller than $t$ in
$\mathcal E_t\cap B(x_0,r)$  for all $r>0$, but this
is impossible. The same argument makes $f(x_0)> t$ impossible too. As a result $f(x_0)= t$. 
\qed

\bigskip Introducing a parametrization of $\Gamma$ is advantageous for further considerations.
If we use the convention that $[0,L]\ni s\mapsto x(s)$ is an arc length parametrization of $\Gamma$ and $x(0) =a$, $x(L)=b$, then we define the following sets,
$$
B_a = \{ s\in(0,L): 
d(x(s),\Upsilon) = d(x(s),a)\},\qquad
B_b = \{ x\in(0,L): 
d(x(s),\Upsilon) = d(x(s),b)\}.
$$
We  shall write $\tilde S = x^{-1}(S)$, where $S$ is defined in \eqref{eq:defS}
. 
Let us set $s_a = \sup 
B_a$, $s_b =\inf 
B_b$. We notice that $s_a\in 
B_a$, and  $s_b \in 
B_b$. 
Indeed, let us take $s_n\in 
B_a$ converging to $s_a$, then after introducing the shorthand $x_n = x(s_n)$,
$$
d(x(s_a),a) = \lim_{n\to\infty} d(x_n,a) = \lim_{n\to\infty} d(x_n,\Upsilon) = d(x(s_a),\Upsilon).
$$
It is obvious from the definition that
\begin{equation}\label{r:a}
 s_a \le \inf \tilde S, \qquad \sup  \tilde S \le s_b.
\end{equation}
We prove a more precise statement, namely equalities hold
in \eqref{r:a}. 
\begin{corollary} Let us suppose that $S$, defined in (\ref{eq:defS}), is not empty, then 
 using the notation introduced above, we have that
$$
\sup B_a =\max B_a  \in B_a,\quad\inf B_b =\min B_b  \in B_b,
\quad \max B_a  = \inf \tilde S,\quad \min B_b =\sup \tilde S. 
$$
\end{corollary}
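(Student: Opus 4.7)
The first two bullets, $\sup B_a=\max B_a\in B_a$ and $\inf B_b=\min B_b\in B_b$, are exactly what the paragraph preceding the corollary has already verified: the map $s\mapsto d(x(s),\Upsilon)-d(x(s),a)$ is continuous on $(0,L)$, hence $B_a$ is relatively closed and $\sup B_a$ is attained, and analogously for $B_b$. The substantive content of the corollary is therefore the two equalities $\max B_a=\inf\tilde S$ and $\min B_b=\sup\tilde S$, which upgrade the inequalities recorded in \eqref{r:a} to equalities.

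My plan is to establish $\max B_a=\inf\tilde S$ by contradiction; the equality $\min B_b=\sup\tilde S$ then follows by the mirror argument. Since $s_a:=\max B_a\le\inf\tilde S$ is already in hand, I will assume $s_a<\inf\tilde S$ and pick any $s\in(s_a,\inf\tilde S)$. Compactness of $\Upsilon$ supplies a point $y\in\Upsilon$ with $d(x(s),y)=d(x(s),\Upsilon)$. Because $s\notin\tilde S$, no such $y$ can lie in $\Upsilon^\circ$, so $y\in\{a,b\}$; because $s>s_a=\sup B_a$, one cannot have $y=a$; hence $y=b$, i.e.\ $s\in B_b$. This proves $(s_a,\inf\tilde S)\subseteq B_b$, and so $s_b:=\inf B_b\le s_a$. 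Combining this with $\sup\tilde S\le s_b$ from \eqref{r:a} and with $\inf\tilde S\le\sup\tilde S$ (which holds because the hypothesis $S\neq\emptyset$ gives $\tilde S\neq\emptyset$), I arrive at the chain
$$
\inf\tilde S\le\sup\tilde S\le s_b\le s_a<\inf\tilde S,
$$
the desired contradiction.

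The only point requiring care is the step that concludes $s\in B_a\cup B_b$ from $s\notin\tilde S$. The three sets $B_a$, $B_b$, $\tilde S$ are not pairwise disjoint — a single $x(s)$ may admit several minimizers, for instance an endpoint and an interior point of $\Upsilon$ simultaneously — so one cannot invoke a clean partition of $(0,L)$. What saves the argument is simply the decomposition $\Upsilon=\Upsilon^\circ\cup\{a,b\}$ together with compactness, which yields $(0,L)=B_a\cup B_b\cup\tilde S$ (with possible overlaps) and in particular the implication $s\notin\tilde S\Rightarrow s\in B_a\cup B_b$ used above. Once this trichotomy is recognized, the remainder of the proof is a short chain of inequalities; notably neither the strict convexity of $\Omega$ nor the smoothness of $\Upsilon$ — which were essential in Proposition \ref{p:cc} — are needed here.
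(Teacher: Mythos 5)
Your proposal is correct and follows essentially the same route as the paper: for $s$ strictly between $\sup B_a$ and $\inf\tilde S$ you observe that the distance to $\Upsilon$ cannot be attained at an interior point (since $s\notin\tilde S$) nor at $a$ (since $s\notin B_a$), hence is attained at $b$, and this contradicts the ordering $\sup\tilde S\le\inf B_b$ together with $\tilde S\neq\emptyset$. The only difference is that you spell out the final contradiction as an explicit chain of inequalities where the paper merely asserts ``this is impossible if $S\neq\emptyset$''; the closedness of $B_a$, $B_b$ is indeed already handled in the paragraph preceding the corollary.
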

\proof
It is sufficient to consider one 
of inequalities in (\ref{r:a}), because the argument is the same in both cases.
If $s_a<s_0<\inf\tilde S$, then $d(x(s_0),\Upsilon) <d(x(s_0),a)$. At the same time $d(x(s_0),\Upsilon)$ may not be attained at any $y\in Y^\circ$.
As a result, we deduce that $d(x(s_0),\Upsilon) = d(x(s_0), b$), but this is impossible if 
$S \neq \emptyset$. \qed 


\bigskip
\noindent{\bf Remark.} Let us stress that due to a possible non-uniqueness of solutions to (\ref{3di}), it may also happen that $x(\sup 
B_a) \in S$. 

\bigskip
{\bf \it Proof of Theorem \ref{main2}, Part 1).}
We assume $f(a) = f(b) = \inf_{\d\Omega} f$ and 
$u$ is  any  solution to (\ref{eq:partialLG})
$\mathcal E_t =\{ u \ge t\}$. Due to Lemma \ref{l:st} we know that $\d\cE_t\cap \Gamma\subset f^{-1}(t)$. We set
\begin{equation}\label{d:tu}
\mathcal T_\Upsilon = \{ t\in \cI:\ \d \mathcal E_t\cap \Upsilon \neq\emptyset\},\qquad
\mathcal T_\Gamma = \{ t\in \cI:\ \d \mathcal E_t\cap \Upsilon= \emptyset\}.
\end{equation}
Since each value of $f$, except for $\inf_{\d\Omega} f$ and $\max_{\d\Omega} f$ is attained exactly twice, then
we introduce 
$$
f^{-1}(t) =\{x^t,y^t\}\qquad \hbox{for }t\in (\inf_{\d\Omega} f, \max_{\d\Omega} f) .
$$

In order to prove that $\mathcal T_\Gamma$ and $\mathcal T_\Upsilon$ are non-empty, we take $t$ such that $t-f(a)>0$ is so small that $x^t \in x(B_a)$ and $y^t \in x(B_b)$ and
$$
d(x^t, y^t) > d(x^t,a) +d(b, y^t).
$$
Hence, $\d \cE_t$ must consist of the segments $[x^t, a]$ and $[b,y^t]$, i.e. $\mathcal T_\Upsilon\neq\emptyset$.

Take $x_M$, the strict, global maximum of $f$. Then, $\Gamma \setminus\{x_M\}=\Gamma_1\cup\Gamma_2$ and on each arc $\Gamma_i$, $i=1,2$, the function $f$ is monotone. Let  $t$ be such that $ f(x_M) - t$ is small. Thus, $x^t$ and $y^t$ are close to $x_M$ and 
$$
d(x^t, y^t) <d(x^t,a) +d(b, y^t).
$$
Hence, $\d\mathcal  E_t$ must be segment $[x^t, y^t]$, so $\mathcal T_\Gamma\neq\emptyset$.

We notice that if we  take $t\in T_\Upsilon $, $s\in \mathcal T_\Gamma$, then the only possibility is $s>t$.
Moreover, at least one of $\d \mathcal E_s$, $\d \mathcal E_t$ does not intersect $\d\Gamma$. Hence,
inequality $s<t$ implies that $\mathcal E_t \Subset \mathcal E_s$, which in turn implies that the  intersection of $\d\mathcal E_t$ and $\d \mathcal E_s$ is not empty, but  this is impossible.
As a result,
$$
\inf \mathcal T_\Upsilon \ge \sup \mathcal T_\Gamma .
$$
Since the strict inequality above is not possible, we may conclude:
 \begin{equation}\label{r:tau}
 \inf \mathcal T_\Upsilon = \sup \mathcal T_\Gamma =:\tau.
\end{equation} 

We would like to make sure that $\tau$ is in  $\mathcal T_\Upsilon$ and in $\mathcal T_\Gamma$. In order to show this we
recall that
$x^t\in\Gamma_1$ and $y^t\in\Gamma_2$. Moreover, the functions
$$
t\mapsto x^t,\ y^t
$$
are continuous, because of the strict monotonicity of $f|_{\Gamma_i}$, $i=1,2$. Thus, we infer continuity of
$$
d(x^t, a) + d(y^t,b) - d(x^t, y^t) =: h(t).
$$
Since $h(f(a))<0$ and $h(f(x_M))>0$, we deduce the existence of a point where $h$ vanishes.  Moreover, this implies that $\tau \in \mathcal T_\Gamma \cap \mathcal T_\Upsilon$ and $\tau$ is defined uniquely.

We claim that this $\tau$ is the number postulated in the statement of Part 1).
In order to check it, 
let us take  any $t>\tau$, where $\tau$ is defined in (\ref{r:tau}), then the assumptions of Lemma \ref{lem:inclusionofAt} are satisfied, hence
$\mathcal E_t \Subset \mathcal E_\tau$ 
and $\d \mathcal E_t$ intersects $\Gamma.$ 
If we take a sequence $s_n > \tau$ converging to $\tau$, then we can deduce that $x^{s_n}\to x^\tau$ and
$y^{s_n}\to y^\tau$ and $\mathcal E_{s_n}\Subset \mathcal E_\tau$. Therefore,
$$
\bigcup_{\epsilon>0} \mathcal E_{\tau +\epsilon} = \Omega \cap H^+(x^\tau,y^\tau),
$$
where $ H^+(x^\tau,y^\tau)$ is the closed half plane with  boundary containing $[x^\tau,y^\tau]$ and $\d  H^+(x^\tau,y^\tau) \cap  \d\Gamma = \emptyset$.

On the other hand, for $t<\tau$, we have $\d \mathcal E_t \cap\Upsilon \neq\emptyset$.  Thus, $\d\mathcal  E_t\neq [x^t,y^t]$. Since $t<\tau$, there are $p^t$, $q^t\in \Upsilon$ such that 
$$
\d \mathcal E_t = [x^t,p^t]\cup [y^t,q^t].
$$
Let us suppose that $t_n<\tau$ converges to $\tau$. Then,
$$
p^{t_n}\to p^\tau,\qquad q^{t_n}\to q^\tau
$$
and $p^\tau$, $q^\tau \in\Upsilon$.
As a result $\tau\in T_\Upsilon$. 
At the same time 
$$
\bigcap_{\epsilon>0} \mathcal E_{\tau -\epsilon} = \Omega \cap H^+(x^\tau,p^\tau)\cap H^+(x^\tau,q^\tau),
$$
where half planes $ H^+(x^\tau,p^\tau)$ and $H^+(x^\tau,q^\tau)$ are so chosen that their intersection does not contain neither $a$ nor $b$. We see that
$$
\{ u(x) =\tau \} =
\bigcup_{\epsilon>0} \mathcal E_{\tau +\epsilon} \setminus \bigcap_{\epsilon>0}\mathcal  E_{\tau -\epsilon}
=  \Omega \cap H^+(x^\tau,y^\tau)\setminus  \Omega \cap H^+(x^\tau,p^\tau)\cap H^+(x^\tau,q^\tau).
$$
This set has positive Lebesgue measure, i.e. the  $\tau$-level set is fat. It is also easy to see that there no other fat level sets.

Now, we establish uniqueness of solutions.
Indeed, if $u$ is any solution 
and
$t<\tau$, then $\d \mathcal E_t=[x^t, y^t]$, where
$x^t\in B_a$, $y^t\in B_b$. For $t>\tau$, then $t\in \mathcal T_\Gamma$ and again the structure of $\d \mathcal E_t$ is known. We can see that for each point $x$  in $\Omega\setminus\{u = \tau\}$ 
there exists a unique $\d \mathcal E_t$, such that $x\in \d \mathcal E_t$. Hence, we can apply
Lemma \ref{p:8} and then Lemma \ref{flag}. This ends the proof of Part 1).

\bigskip
{\bf\it Proof of part 3).} We proceed as in the proof of part 1). For a solution $u$, we set $\mathcal E_t=\{u\ge t\}$. Let us examine $\d  \mathcal E_t$ for $t\in(f(a),f(x_M))$. 
We know that $\d \mathcal E_t \cap \Gamma \subseteq f^{-1}(t)$. We have $f^{-1}(t)=\{x^1_t,x^2_t\}$.
Actually,
we claim that $\d \mathcal E_t \cap \Gamma = f^{-1}(t)$.  If it were otherwise, then $\d \mathcal E_t$ would have contained a point $y$ from $\Upsilon$. We notice that (\ref{A}) implies that $y = a$ or $y=b$. Otherwise $\d \mathcal E_t$ would intersect $\mathcal E_{f(a)}$ in $\Omega$, but this is not possible. Thus, $\d \mathcal E_t$ must consist of $[x^1_t,c]$ and $[x^2_t,c]$, where $c=a$ or $c=b$. But due to the triangle inequality $d(x^1_t,x^2_t)< d(x^1_t,c) +  d(x^2_t,c)$, contrary to minimality of $P(\mathcal E_t,\Omega_o)$. This means that the triangle with vertices $x_0$, $a$ and $b$ is contained in $\{u= f(a)\}$. Hence, the $f(a)$-level set is fat.
We also deduce continuity of $u$ at points belonging to $\Upsilon$.

Uniqueness follows by the same argument as in Part 1), thus the proof of Part 3) is complete.

\bigskip
{\it Proof of part 2)}\ Suppose $u_0$ is given by Theorem \ref{main} and $\cE_t^0$ are its superlevel sets. By Proposition \ref{bombieri}, $\d \mathcal E_t^0$ are minimal surfaces, i.e., intervals $\ell^t$.
They must intersect $\d\Omega$, so $\ell^t = [x^t,y]$, $y\in\Upsilon$.

Obviously, there is $x_0\in \Gamma$ such that $d(x_0,a)=d(x_0,b)$, hence $x_0$ is an element of $D$. 
Moreover, by Proposition \ref{p:cc} we know that $D$ is at most countable.
Momentarily we show that elements of $f(\Phi(D))$ have fat level sets. 

If $x_0\in D$ and $\Phi(x_0)\supset\{y_1,y_2\}$, then the set bounded by $[x_0,y_1]$, $[x_0,y_2]$ and the arc $\overline{y_1y_2}$, denoted by $\bigtriangleup_{x_0}$, has positive measure and $u|_{\bigtriangleup_{x_0}} = f(x_0)$. Next, we  check that for each $x$ in $\Omega\setminus \bigcup_{x_i\in D} \bigtriangleup_{x_i}$ there is $\cE^t$ 
such that $x\in \d\mathcal  E_t$. If it were otherwise, then we would have two cases to consider:
$$
1)\quad \min_{t\in\cI}\dist(x,\d \cE_t^0) >0\qquad\hbox{or}\qquad
2)\quad\inf_{t\in\cI}\dist(x,\d \cE_t^0) =0.
$$
The first case implies existence of a ball $B(x_0,r)$ such that no $[x^t,y^t]$ intersects it.  Then we would see that $B(x_0,r)$ is contained in a fat level set, but we are considering $\Omega$ with all fat level sets removed, so we reached a contradiction.

In the second case there would be a sequence $t_n$ converging to $t_0$ and such that $\dist (x,\d \cE^0_{t_n}) \to 0$. There is also a sequence $x_n$ converging to $x$ such that $u(x_n) = t_n$.
Since $u_0$ is constructed in Theorem \ref{main} is
continuous in $\Omega$, we deduce that $u_0(x) = t_0$.
Since for all $t_n \neq t_0$, then  we deduce that $x\in \d\cE^0_{t_0}$ contrary to the assumption.

We describe the level set structure with the help of $B_a$, $B_b$, $S$.
We  observe that $f^{-1}(t)$ consists of a single point $x^t$, 
$t\in(\inf_{\d\Omega}f,\sup_{\d\Omega}f)$. Since $\d\cE_t^0$ is a line segment $[x^t, y]$, then $y\in \Upsilon$.

Since we discovered the structure of the level sets of continuous, then uniqueness of solutions follows in this class of function due to the argument used in the earlier Parts. \qed

\section{Least gradient problem on rectangles}\label{s:re}
In this section, we consider the least gradient problem, when $\Omega$ is a rectangle. Notice that in this case, $\Omega$ does not satisfy the assumptions in \cite{sternberg}. We study the case when the data $f$ are continuous strictly monotone on two pieces of the boundary and prove existence of continuous least gradient solution. We next show uniqueness of these solution in the $BV$ setting. Notice that for general boundary data, least gradient solutions might not be continuous nor unique see \cite[Example 2.7]{mazon}.
We state the main result of this section in the following theorem:

\begin{theorem}\label{t:r}
We are given a rectangle $\Omega=(-L,L)\times(-h,h)$.
 We denote $v_i = \{(-1)^{1+i} L\}\times (-h,h)$ and
$h_i= (-L,L)\times \{(-1)^{1+i}h\}$ , $i=1,2$ the sides of $\Omega$. 
We write
 $\d\Omega= \Gamma_1 \cup \Gamma_2$, where $\Gamma_1=h_1\cup v_1$, $\Gamma_2=h_2\cup v_2$.
 We are given a continuous function $f$, such that $f|_{\Gamma_i}$ is
strictly  monotone with $i=1,2$,
Then there exists a unique continuous solution to 
 \begin{equation}\label{rr}
 \min\left\{\int_\Omega | D u|:\ u \in BV(\Omega),\ T_{\d\Omega}u= f\right\},
\end{equation}
where the boundary condition is understood in terms of the trace of $BV$ functions.
\end{theorem}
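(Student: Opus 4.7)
The plan is to follow the strategy outlined in Section \ref{Si}: approximate $\Omega$ by strictly convex $C^{1}$ domains, solve the least gradient problem on each approximating domain by Theorem \ref{main2}(2), and pass to the limit using the stability theorem of \cite{miranda}. The explicit chord structure produced by the SWZ construction will then be used to identify the trace of the limit with $f$ and to obtain uniqueness in $BV$.

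I would first construct a sequence of strictly convex $C^{1}$ domains $\Omega_{n}$ with $\overline{\Omega}\subset\Omega_{n}$, $|\Omega_{n}\setminus\Omega|\to 0$ and $\partial\Omega_{n}\to\partial\Omega$ in Hausdorff distance; a concrete choice is a slightly dilated superellipse $\{(x/L)^{2k_{n}}+(y/h)^{2k_{n}}\le 1+1/n\}$ with $k_{n}\to\infty$. Using continuity of $f$ at the two corners $(-L,h)$ and $(L,-h)$ where the closures of $\Gamma_{1}$ and $\Gamma_{2}$ meet, I extend $f$ to continuous functions $f_{n}$ on $\partial\Omega_{n}$ whose restrictions to the analogous arcs $\Gamma^{n}_{1},\Gamma^{n}_{2}\subset\partial\Omega_{n}$ remain strictly monotone, and such that $f_{n}\to f$ uniformly on the common part of the boundaries.

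Applying Theorem \ref{main2}(2) to $(\Omega_{n},f_{n})$ with $\Gamma=\partial\Omega_{n}$ yields a unique continuous least gradient solution $u_{n}\in C(\overline{\Omega_{n}})$ with $Tu_{n}=f_{n}$. The SWZ construction makes the structure of $u_{n}$ completely explicit: for each $t$ in the range of $f_{n}$, the superlevel boundary $\partial\{u_{n}\ge t\}$ is the single chord $[x^{t}_{1,n},x^{t}_{2,n}]$ joining the unique preimages $x^{t}_{i,n}\in\Gamma^{n}_{i}$ of $t$, and these chords foliate $\Omega_{n}$ without crossings, since strict monotonicity places the endpoints of two distinct chords in nested cyclic order along $\partial\Omega_{n}$. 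The uniform bounds $\|u_{n}\|_{\infty}\le\|f\|_{\infty}$ (cf.\ Proposition \ref{p:range}) and $\sup_{n}\int_{\Omega_{n}}|Du_{n}|<\infty$ (obtained by comparing $u_{n}$ to a fixed Lipschitz extension of $f$ to a neighborhood of $\overline\Omega$) allow extraction of a subsequence $u_{n}\to u$ in $L^{1}(\Omega)$, and \cite{miranda} guarantees that $u$ is of least gradient on $\Omega$. As $t\mapsto x^{t}_{i,n}$ converges uniformly to a continuous parametrization $t\mapsto x^{t}_{i}$ of $\partial\Omega$, the chords $[x^{t}_{1,n},x^{t}_{2,n}]$ converge in Hausdorff distance to chords $[x^{t}_{1},x^{t}_{2}]\subset\overline\Omega$ that still foliate $\Omega$. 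This forces the superlevel sets of $u$ to be bounded by these limiting chords, produces a continuous representative of $u$ on $\overline\Omega$, and identifies $Tu=f$.

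For uniqueness in $BV$, I would mirror the template of Theorem \ref{main2}. For any $BV$ solution $\tilde u$ of \eqref{rr}, the argument of Lemma \ref{l:st} (which needs only convexity of $\Omega$, not strict convexity) gives $\partial\{\tilde u\ge t\}\cap\partial\Omega\subseteq f^{-1}(t)=\{x^{t}_{1},x^{t}_{2}\}$. Since $\partial\{\tilde u\ge t\}$ is a minimal surface in $\R^{2}$ whose components cannot have endpoints inside $\Omega$, it must coincide with the chord $[x^{t}_{1},x^{t}_{2}]$, hence with $\partial\{u\ge t\}$; Lemma \ref{p:8} and Lemma \ref{flag} then yield $\tilde u=u$ almost everywhere. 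The principal obstacle is the passage to the limit: $L^{1}$-convergence does not in general preserve traces, so I rely on the Hausdorff stability of the chord foliation — because $u_{n}$ is entirely determined by these chords and they converge to a foliation of $\Omega$ whose endpoints on $\partial\Omega$ realize $f^{-1}(t)$, continuity up to $\partial\Omega$ and the boundary condition are preserved automatically in the limit.
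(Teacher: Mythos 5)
Your proposal follows essentially the same route as the paper: approximate the rectangle by strictly convex domains, transport $f$ to the approximants, invoke the SWZ existence/uniqueness theory there, restrict to $\Omega$, pass to the limit via Miranda's stability result, and recover trace, continuity and uniqueness from the explicit chord foliation $\partial\{u\ge t\}=[x^t_1,x^t_2]$ together with Lemmas \ref{p:8} and \ref{flag}. Two small remarks: on $\Omega_n$ with data on all of $\partial\Omega_n$ the relevant existence result is the classical one of \cite{sternberg} rather than Theorem \ref{main2}(2) (which concerns $\Gamma\subsetneq\partial\Omega$), and the step you compress into ``the limiting chords still foliate $\Omega$ and force continuity and the correct trace'' is where the paper does its real work, establishing an explicit modulus of continuity (with a $\sqrt{|x_1-x_2|}$ degradation near the corners where chords meet adjacent sides) and uniform convergence $u_n\to w$.
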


Before proving our theorem, 
we recall the following stability result, proved in \cite{miranda}.

\begin{proposition}\label{thm:Miranda}
If $u_n$ is a sequence in of least gradient functions converging in $L^1(\Omega)$ to  function $u$, then $u$ is a function of least gradient. \qed
\end{proposition}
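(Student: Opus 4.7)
The plan is to pass through the superlevel-set characterization of least gradient functions, reducing the claim to the well-known stability of locally perimeter-minimizing sets under $L^1$-convergence. First, to make the statement meaningful, I would assume $\liminf_n\int_\Omega|Du_n|<\infty$ (otherwise pass to a subsequence with bounded total variations); the $L^1$-lower semicontinuity of the total variation then yields $u\in BV(\Omega)$. The central observation, extending Lemma~\ref{bombieri}, is that a $BV$ function $v$ is of least gradient in $\Omega$ if and only if each superlevel set $\{v>t\}$ is a \emph{local perimeter minimizer} in $\Omega$, i.e. $P(\{v>t\},\Omega)\leq P(F,\Omega)$ whenever $F\triangle\{v>t\}\Subset\Omega$. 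One direction is due to Bombieri--De Giorgi--Giusti \cite{bomba}; the converse follows by coarea, since any compactly supported $w\in BV(\Omega)$ gives $\{u+w>t\}\triangle\{u>t\}\subset\supp w$, so $P(\{u>t\},\Omega)\leq P(\{u+w>t\},\Omega)$ for a.e.\ $t$, and integrating in $t$ yields $\int_\Omega|Du|\leq \int_\Omega|D(u+w)|$. In particular, each $\{u_n>t\}$ is a local perimeter minimizer in $\Omega$.

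Next, from the identity $\int_\Omega|u_n-u|\,dx=\int_{-\infty}^{\infty}|\{u_n>t\}\triangle\{u>t\}|\,dt$ and Fubini, I would extract a (non-relabeled) subsequence along which $\chi_{\{u_n>t\}}\to\chi_{\{u>t\}}$ in $L^1(\Omega)$ for a.e. $t\in\mathbb{R}$. I would then invoke the classical stability result of De Giorgi--Giusti: if $E_n$ are local perimeter minimizers in $\Omega$ with $\chi_{E_n}\to\chi_E$ in $L^1_{\mathrm{loc}}(\Omega)$, then $E$ is again a local perimeter minimizer in $\Omega$. Applied pointwise in $t$ along the subsequence, this yields that $\{u>t\}$ is a local perimeter minimizer for a.e. $t$, and the converse direction of the characterization above then forces $u$ to be a function of least gradient.

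The delicate part is the stability of local perimeter minimizers. Given a compactly supported competitor $F$ of $E=\{u>t\}$, one must manufacture competitors $F_n$ for $E_n=\{u_n>t\}$ whose perimeters converge to $P(F,\Omega)$. The standard device is to set $F_n=(F\cap K)\cup(E_n\setminus K)$ for a compact $K\Supset\supp(F\triangle E)$ whose boundary $\partial K$, by a Fubini-type argument, meets the reduced boundaries of $E$ and the $E_n$ in sets of zero $\mathcal{H}^1$-measure, and then to combine lower semicontinuity of perimeter with $L^1_{\mathrm{loc}}$-convergence of $E_n$ to $E$ to control the perimeter contribution near $\partial K$. This is precisely the content of the result from \cite{miranda} being recalled here.
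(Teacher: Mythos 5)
The paper offers no proof of this proposition: it is quoted as a known stability result from \cite{miranda}, with the \qed attached directly to the statement, so there is no internal argument to compare yours against. Judged on its own, your outline is correct and is essentially the classical route to Miranda's theorem: the equivalence between being of least gradient and having superlevel sets that locally minimize perimeter (the forward direction from \cite{bomba}, the converse by the coarea formula exactly as you write it, noting that ``for a.e.\ $t$'' suffices since only an integral in $t$ is needed); the Cavalieri identity $\int_\Omega|u_n-u|\,dx=\int_{\mathbb{R}}|\{u_n>t\}\triangle\{u>t\}|\,dt$ to extract a subsequence along which a.e.\ superlevel set converges in $L^1$; and the De Giorgi--Miranda stability of local perimeter minimizers, whose cut-and-paste proof you describe accurately (choose the separating hypersurface so that $\mathcal{H}^1(\partial^*E\cap\partial K)=0$ and $\int_{\partial K}|\chi_{E_n}-\chi_E|\,d\mathcal{H}^1\to 0$ by a Fubini argument, then use minimality of $E_n$ and lower semicontinuity inside $K$). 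Your caveat about $\liminf_n\int_\Omega|Du_n|$ is the right instinct --- the statement needs $u\in BV(\Omega)$ to be meaningful, and in the paper's application (Step 2 of the proof of Theorem \ref{t:r}) the $u_n$ are uniformly bounded in $BV(\Omega)$, so nothing is lost; one can also note that the Caccioppoli-type inequality for least gradient functions gives local $BV$ bounds for free. For completeness: there is a shorter, purely function-level proof that bypasses superlevel sets entirely --- given $w$ with $\supp w\Subset A\Subset\Omega$, glue $u+w$ inside $A$ to $u_n$ outside along a circle $\partial B_\rho$ chosen so that $\int_{\partial B_\rho}|u_n-u|\,d\mathcal{H}^1\to 0$, apply minimality of $u_n$ and lower semicontinuity on $A$; this is the argument in \cite{finnish}. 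Either route is legitimate; yours buys the set-level stability statement as a by-product, which is in fact what Section \ref{s:e} of the paper implicitly uses when it argues level set by level set.
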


\subsection{Proof of Theorem \ref{t:r}}\label{subs:proof}
We proceed in a number of steps.\\
{\it Step 1.} We consider a sequence of auxiliary problems in strictly convex domains. Let $\Omega_n$ be a region bounded by four properly chosen circle arcs passing trough vertices of $\Omega$ and such that the Hausdorff distance between $ \Omega_n$  and $\Omega$ is less than $1/n$. 
In other words,  
$$
\d\Omega_n = \d\Omega + \nu\gamma_n,
$$
where $\nu$ is the outer normal to $\d\Omega$ (except for the corners), $\gamma_n$ is a smooth function (away from the corners) with $0\le \gamma_n\le 1/n$.
We define functions $f_n$ on $\partial \Omega_n$ with
$
f_n(x + \nu\gamma_n(x)):=  f(x),
$
 $x\in \partial \Omega$.
By \cite{sternberg}  the following problem, where $T_ {\d\Omega_n}:BV(\Omega_n) \to L^1(\d\Omega_n)$ is the trace operator, 
\begin{equation}\label{e:pn}
 \min \left\{ \int_{\Omega_n} |D v|: v\in BV(\Omega_n),\, T_{\d\Omega_n}v = f_n\right\}
\end{equation}
has a unique continuous solution $v_n$.

{\it Step 2.} We set $u_n = v_n \chi_\Omega$. Since $f_n$ is continuous on $\partial \Omega_n$, then by \cite[Theorem 3.33]{Demengel} there exists $F_n\in W^{1,1}(\Omega_n)$ such that 
$F_n|_{\partial \Omega_n}=f_n$ and $\|DF_n\|_{L^{1}(\Omega_n)}\leq C(\Omega_n )\|f_n\|_{L^1(\partial \Omega_n)}.$
In fact, the argument in  \cite[Theorem 3.33]{Demengel} works for Lipschitz domains not only for smooth ones. We arrive at the bounds,
$$
\|DF_n\|_{L^{1}(\Omega_n)}\leq C(\Omega_n )||f_n||_{L^1(\partial \Omega_n)}\leq C(\Omega_n)||f_n||_{L^\infty(\partial \Omega_n)}|\partial \Omega_n|\leq C_0(\Omega)||f_n||_{L^\infty(\partial \Omega_n)}= C_0(\Omega) ||f||_{L^\infty(\Omega)}.
$$
with $C_0$ constant independent of $n$.
By the definition of $u_n$ and $v_n$, we get that
$$
\int_{\Omega} |Du_n|\leq\int_{\Omega_n} |Dv_n|\leq ||DF_n||_{L^{1}(\Omega_n)} \leq C_0(\Omega) ||f||_{L^\infty(\Omega)}.<\infty. 
$$
Thus, there exists a subsequence, denoted by  $u_n $, converging to $u$ in $L^1(\Omega)$ and
$$
\liminf_{n\to\infty} \int_\Omega  |Du_n| \ge \int_\Omega |Du|.
$$

{\it Step 3.} We prove that $u$ is a function of least gradient with a given trace. Let $g_n$ be the trace of $u_n$ on $\partial \Omega$. Since $v_n$ is continuous on $\bar \Omega_n$, and $u_n=v_n\chi_\Omega$, then the trace is defined as follow:
$$
g_n(z)=\lim_{y\to z, y\in\Omega} u_n(y)=v_n(z).
$$ 
By Proposition \ref{thm:Miranda}, it is enough to show that for every $n$, $u_n$ is a function of least gradient over $\Omega$. For this purpose we will use the following observation.

\begin{proposition}
Let $\Omega_1 \subset \Omega_2$ be open sets with Lipschitz boundary. Then, if $u \in BV(\Omega_2)$ is of least gradient in $\Omega_2$, then $u|_{\Omega_1}$ is of least gradient in $\Omega_1$.
\end{proposition}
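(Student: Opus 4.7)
The plan is to pull back an arbitrary admissible perturbation on $\Omega_1$ to an admissible perturbation on $\Omega_2$ by extension by zero, and then apply the least gradient hypothesis on $\Omega_2$. Concretely, I would fix any $w\in BV(\Omega_1)$ with $K:=\supp w$ a compact subset of $\Omega_1$, and define $\tilde w:\Omega_2\to\bR$ by $\tilde w=w$ on $\Omega_1$ and $\tilde w=0$ on $\Omega_2\setminus\Omega_1$. Since $K$ is compactly contained in $\Omega_1$, there is $\varepsilon>0$ such that $w\equiv 0$ on $\{x\in\Omega_1:\dist(x,\d\Omega_1)<\varepsilon\}$, so $\tilde w$ extends to zero on an open neighborhood of $\d\Omega_1\cap\Omega_2$ in $\Omega_2$. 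Consequently $\tilde w\in BV(\Omega_2)$, $|D\tilde w|(\Omega_2)=|Dw|(\Omega_1)$, and $\supp\tilde w=K$ is compactly contained in $\Omega_1\subset\Omega_2$; thus $\tilde w$ is a legal test perturbation in the definition of least gradient on $\Omega_2$.

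Next I would invoke the least gradient hypothesis on $\Omega_2$ to obtain
\[
\int_{\Omega_2}|Du|\;\le\;\int_{\Omega_2}|D(u+\tilde w)|.
\]
The decisive observation is that $\tilde w\equiv 0$ on the open set $\Omega_2\setminus K$, so $D\tilde w=0$ as a Radon measure there and hence $D(u+\tilde w)=Du$ on $\Omega_2\setminus K$. Using the additivity of the total variation measure with respect to the disjoint partition $\Omega_2=K\sqcup(\Omega_2\setminus K)$, both sides split and the contributions on $\Omega_2\setminus K$ cancel, leaving
\[
|Du|(K)\;\le\;|D(u+\tilde w)|(K).
\]

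Finally I would transfer this localized inequality back to $\Omega_1$. Since $K\subset\Omega_1$ and $\tilde w=w$ on $\Omega_1$, the same decomposition on $\Omega_1=K\sqcup(\Omega_1\setminus K)$, combined with $w\equiv 0$ (hence $D(u+w)=Du$) on the open set $\Omega_1\setminus K$, gives
\[
\int_{\Omega_1}|D(u+w)|-\int_{\Omega_1}|Du|=|D(u+w)|(K)-|Du|(K)\;\ge\;0,
\]
which is exactly the least gradient inequality for $u|_{\Omega_1}$ against the arbitrary perturbation $w$. The only delicate point in the argument is ensuring that the zero-extension $\tilde w$ does not generate a jump contribution to $|D\tilde w|$ along $\d\Omega_1\cap\Omega_2$; this is precisely where the compact containment $\supp w\subset\subset\Omega_1$ is used, and no trace-theoretic input or Lipschitz regularity of $\d\Omega_1$ is actually invoked beyond what is needed to make $BV(\Omega_i)$ meaningful.
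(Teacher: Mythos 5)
Your argument is correct, and it takes a genuinely different route from the paper's. You argue forwards: every admissible compactly supported perturbation $w$ on $\Omega_1$ is pushed up to $\Omega_2$ by extension by zero, and the least gradient inequality on $\Omega_2$ is then localized to $K=\supp w$ using locality of the distributional gradient (namely $D\tilde w\llcorner(\Omega_2\setminus K)=0$) together with additivity of the total variation measure. The paper instead argues by contradiction: it takes a hypothetical better competitor $v$ on $\Omega_1$ with $T_{\partial\Omega_1}v=T_{\partial\Omega_1}u$, pastes it with $u$ on $\Omega_2\setminus\Omega_1$, and invokes the $BV$ gluing formula of Evans--Gariepy, in which the interface term $\int_{\partial\Omega_1}|Tv-Tu|$ vanishes because the traces match. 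Your route buys two things: it requires no trace theory and hence essentially no boundary regularity of $\Omega_1$, and the test function $\tilde w$ is manifestly an admissible compactly supported perturbation on $\Omega_2$, so there is no question about the pasted object being a legal competitor. The paper's route has the advantage of working directly with trace-matching competitors, which is closer to how the proposition is actually applied later, but for the statement as given (least gradient in the compact-perturbation sense) your proof is cleaner and slightly more general. The one point you assert rather than prove is that the zero extension $\tilde w$ lies in $BV(\Omega_2)$ with no extra mass on $\partial\Omega_1\cap\Omega_2$; since $\dist(K,\partial\Omega_1)>0$, this is standard (test against $C_c^1(\Omega_2)$ fields after multiplying by a cutoff equal to $1$ near $K$ and supported in $\Omega_1$), so it is not a gap.
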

\proof Suppose that $u|_{\Omega_1}$ is not a function of least gradient in $\Omega_1$. Explicitly, there exists $v \in BV(\Omega_1)$ such that $T_{\partial \Omega_1} u = T_{\partial \Omega_1} v$ and $|Dv|(\Omega_1) < |Du|(\Omega_1)$. Let us define $\widetilde{u}$ by the formula

\begin{equation*}
\widetilde{u}(x) = \left\{
\begin{array}{ll}
 v(x) & \hbox{if }x \in \Omega_1,\\
 u(x)& \hbox{if }x \in \Omega_2\setminus \Omega_1 .
\end{array}
\right.
\end{equation*}
From \cite[Section 5.4]{evans} it follows that the total variation of $\widetilde{u}$ equals

\begin{equation*}
|D\widetilde{u}|(\Omega_2) = |Dv|(\Omega_1) + |Du|(\Omega_2\setminus \Omega_1) + \int_{\partial \Omega_1} 0 < |Du|(\Omega_1) + |Du|(\Omega_2\setminus \Omega_1) + \int_{\partial \Omega_1} 0 = |Du|(\Omega_2),
\end{equation*} 
which contradicts the fact that $u$ is of least gradient in $\Omega_2$. \qed

We apply this proposition with $\Omega_1 = \Omega$, $\Omega_2 = \Omega_n$ and $u$ equal to $v_n$, while $u_n$ is the restriction of $v_n$ to $\Omega$.
We conclude that $u_n$ are minimizers of the following problem
$$
\left\{ \int_{\Omega_n} |Dv|:v\in BV(\Omega), T v =g_n\right\}
$$
and $u$ is a least gradient function. \\

{\it Step 4.}  We show now that $u_n$ converges uniformly to a continuous function $w$.
Due to Step 3, $w=u$ a.e. and $w$ is a least gradient function. 

We have that $\partial \Omega=\Gamma_1\cup \Gamma_2$, with $\Gamma_1=h_1\cup v_1$, and $\Gamma_2=h_2\cup v_2$ and we are given $f$ continuous and strictly monotone on $\Gamma_i$, $i=1,2$.
%
%
%

For every $t \in (\min f, \max f)$, we denote by $\ell^t$ the line segment with endpoints  $x^t$, $y^t$ such that $x^t\in \Gamma_1$, $y^t\in \Gamma_2$, and $f(x^t)=f(y^t)=t$. Notice that by the monotonicity of $f$, segments $\ell^t$ are disjoint. The lemma below explains that the line segment,  $\ell^t$,  fill out region $\Omega$.

\begin{lemma}\label{lem:segment}
For every $z\in \Omega$ there exists a unique $\ell^t$ for $t\in (\min f, \max f)$ such that $z\in \ell^t$.
\end{lemma}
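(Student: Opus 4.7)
The plan is to view the segments $\ell^t$ as a foliation of $\bar\Omega$ parametrized by a continuous map from a topological disk. Since $f$ is continuous on $\partial\Omega$ and strictly monotone on each of the arcs $\Gamma_1,\Gamma_2$, and these arcs share exactly the two ``diagonal'' corners $(-L,h)$ and $(L,-h)$, the restrictions $f|_{\bar\Gamma_i}$ are homeomorphisms onto the same interval $\cI=[\min f,\max f]$, with $f(-L,h)$ and $f(L,-h)$ being the extreme values. Consequently, for each $t\in(\min f,\max f)$ there exist unique $x^t\in\Gamma_1$ and $y^t\in\Gamma_2$ with $f(x^t)=f(y^t)=t$, and the maps $t\mapsto x^t,y^t$ are continuous. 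I would extend the definition to $t=\min f$ and $t=\max f$ by letting both land at the corresponding shared corner, so that
\[
\Psi:\cI\times[0,1]\to\bar\Omega,\qquad \Psi(t,s)=(1-s)x^t+s y^t,
\]
is well defined and continuous, with image in $\bar\Omega$ by convexity.

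For uniqueness I would argue that two distinct segments $\ell^{t_1},\ell^{t_2}$ with $t_1<t_2$ cannot meet inside the convex set $\Omega$. The shared corner values of $f$ together with the global continuity force both monotonicities to run in the same direction, so $x^{t_1},x^{t_2}$ appear in this order along $\Gamma_1$ and $y^{t_1},y^{t_2}$ in the corresponding order along $\Gamma_2$. Reading cyclically around $\partial\Omega$ this gives the order $x^{t_1},x^{t_2},y^{t_2},y^{t_1}$, so the four endpoints do not interleave and the two chords are disjoint inside $\Omega$.

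For existence I would use a topological argument. The parameter set $\cI\times[0,1]$ is a topological disk; the map $\Psi$ collapses its two vertical edges to the corners $(-L,h)$ and $(L,-h)$ and sends its two horizontal edges homeomorphically onto $\bar\Gamma_1$ and $\bar\Gamma_2$. Altogether $\Psi$ restricted to the boundary of the parameter rectangle is a degree-one map onto $\partial\Omega$. If some $z\in\Omega$ were avoided by every $\ell^t$, then $\Psi$ would factor through $\bar\Omega\setminus\{z\}$, which deformation retracts onto $\partial\Omega$; composing with the retraction would yield a continuous map from the disk into $\partial\Omega$ whose boundary restriction still has degree one, contradicting the no-retraction theorem. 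Hence every $z\in\Omega$ lies on at least one $\ell^t$, and combined with the non-crossing argument above, this $t$ is unique.

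The main delicate point I anticipate is handling the parametrization up to and including the two shared corners: one has to verify the continuity of $t\mapsto x^t,y^t$ at $t=\min f$ and $t=\max f$ and the coalescence there, so as to make $\Psi$ continuous on the closed parameter rectangle and to obtain the correct degree on the boundary. Apart from this corner analysis, the proof is essentially planar convex geometry combined with the elementary topology of the closed disk.
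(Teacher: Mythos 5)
Your argument is correct, but it takes a genuinely different route from the paper's. The paper fixes $z$ and runs a direct intermediate value theorem on the pencil of lines through $z$: after reducing (without loss of generality) to the case where $z$ lies on the $\Gamma_2$-side of the diagonal joining the two shared corners, it parametrizes $\Gamma_1$ by arclength $s$, lets $y(s)$ be the second intersection of the line through $x(s)$ and $z$ with $\Gamma_2$, and observes that $s\mapsto f(x(s))-f(y(s))$ is continuous, strictly monotone and changes sign; the unique zero produces the unique $\ell^t$ through $z$, so existence and uniqueness come in a single stroke. You instead decouple the two: uniqueness from the non-interleaving of endpoints of distinct chords (which the paper records just before the lemma as the disjointness of the $\ell^t$), and existence from a degree/no-retraction argument for the map $\Psi(t,s)=(1-s)x^t+sy^t$ on the parameter rectangle. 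Your route does require the corner analysis you flag — continuity of $t\mapsto x^t,y^t$ up to $t=\min f$ and $t=\max f$ and their coalescence at the shared corners — but this is immediate once one notes that $f|_{\bar\Gamma_i}$ are homeomorphisms onto $[\min f,\max f]$ with the extrema attained at the two common corners. What your approach buys is the avoidance of the paper's case distinction on which side of the diagonal $z$ lies (and of the verification that the rotating line actually exits through $\Gamma_2$), together with greater robustness: the degree argument would survive replacing the straight segments by any continuously varying family of pairwise disjoint arcs joining $\Gamma_1$ to $\Gamma_2$. What the paper's approach buys is elementarity: it uses nothing beyond the one-dimensional intermediate value theorem and planar convexity.
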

\proof
Without loss of generality, we may assume
that $z$ is closer to $\Gamma_2$, i.e below the diagonal joining the endpoints of $\Gamma_1$. Let $s$ be the arclength parameter of $\Gamma_1$, its range is $(0,2(L+h))$, and let $x(s)$ be the parametrization of $\Gamma_1$ such that
$$
\lim_{s\to 0^+}f(x(s)) =\min_{\partial \Omega} f,\qquad
\lim_{s\to (2(L+h))^-}f(x(s)) = \max_{\partial \Omega} f.
$$
For every $s$, we draw the line $\ell (x(s),z)$ passing through $x(s)$ and $z$ and we call $y(s)$ its intersection with $\Gamma_2$. We notice that $y(s)$ is continuous and
$$
\lim_{s\to 0^+}f(x(s)) - f(y(s))=\min_{\partial \Omega}f-f(y(s)) \leq 0,\quad
\lim_{s\to (2(L+h))^-}f(x(s))- f(y(s))=\max_{\partial \Omega}f-f(y(s)) \geq 0.
$$
Due to strict the monotonicity of the function $s\mapsto f(x(s)) - f(y(s))$,
there exists a unique $s_0$ and $t$ such that $\ell^t = [x(s_0),y(s_0)]$, $z\in \ell^t$, and $t = f(x(s_0)) = f(y(s_0))$. 
\qed

The above lemma guarantees that function $w$ on $\Omega$ given by the formula below is well-defined:
$$
w(x)=t\qquad\hbox{if }x\in \ell^t.
$$
\begin{lemma}\label{lem:continuity}
 $w$ is continuous in $\Omega$.
 \end{lemma}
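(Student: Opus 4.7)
The plan is to prove continuity at an arbitrary fixed $z_0 \in \Omega$ by a subsequence argument. Let $t_0 = w(z_0)$, so $z_0 \in \ell^{t_0}$. Since $z_0$ lies in the open rectangle, the segment $\ell^{t_0}$ is non-degenerate and $t_0$ belongs to $(\min f, \max f)$.

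The first step I would carry out is to verify that $t \mapsto (x^t, y^t)$ is continuous on $(\min f, \max f)$. Because $f|_{\Gamma_i}$ is continuous and strictly monotone on an arc, it is a homeomorphism onto its range; inverting this homeomorphism gives continuous dependence of $x^t \in \Gamma_1$ and $y^t \in \Gamma_2$ on $t$. Consequently, the segments $\ell^t = [x^t, y^t]$ vary continuously in the Hausdorff distance.

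Now I would take any sequence $z_n \to z_0$ and set $t_n = w(z_n)$. Since $\{t_n\}$ is bounded in $[\min f, \max f]$, it suffices to show that every convergent subsequence has limit $t_0$. If $t_{n_k} \to t^*$, the previous step yields $\ell^{t_{n_k}} \to \ell^{t^*}$ in Hausdorff distance; combined with $z_{n_k} \in \ell^{t_{n_k}}$ and $z_{n_k} \to z_0$, this forces $z_0 \in \ell^{t^*}$. The uniqueness asserted in Lemma \ref{lem:segment} then implies $\ell^{t^*} = \ell^{t_0}$ and therefore $t^* = t_0$. The one delicate point is that the Hausdorff convergence requires $t^* \in (\min f, \max f)$; but because $z_0$ is interior and $\ell^t$ collapses to a corner as $t$ approaches the extremes, the $t_n$ stay in a compact sub-interval of $(\min f, \max f)$, so this difficulty does not arise.
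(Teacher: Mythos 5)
Your argument is correct as a proof of the bare statement, but it takes a genuinely different, softer route than the paper. You prove continuity at a fixed interior point by a compactness/subsequence argument: the inverse of the homeomorphism $f|_{\Gamma_i}$ gives continuity of $t\mapsto(x^t,y^t)$, hence Hausdorff continuity of $t\mapsto\ell^t$, and then uniqueness from Lemma \ref{lem:segment} identifies the limit of any convergent subsequence of $w(z_n)$; your handling of the degenerate endpoints (segments collapsing to corners as $t\to\min f$ or $t\to\max f$) is the right way to keep the subsequential limits in the open interval. The paper instead runs an explicit three-case analysis of which sides of the rectangle the two segments $\ell^{t_1},\ell^{t_2}$ meet and extracts the quantitative bound $|w(x_1)-w(x_2)|\le \omega\left(c_1|x_1-x_2|+c_2\sqrt{|x_1-x_2|}\right)$, i.e.\ an explicit modulus of continuity valid for all pairs of points in $\Omega$. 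That extra quantitative content is not cosmetic: it is exactly what is invoked in Lemma \ref{lem:uniform} to get \emph{uniform} convergence of $u_n$ to $w$ and again in Step 5 to identify the trace. So your proof suffices for Lemma \ref{lem:continuity} as stated, but if one adopted it, the subsequent two steps of the proof of Theorem \ref{t:r} would have to be reworked (e.g.\ by upgrading your argument to uniform continuity, or by finding another route to uniform convergence).
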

 
\proof Let us call by $\omega$ the continuity modulus of $f$. Let $x_1, x_2\in \Omega$. For each $i=1,2$ there exists a unique $t_i\in (\min f ,\max f)$ such that $x_i\in \ell^{t_i}=[x^{t_i},y^{t_i}]$. 
There are three cases to consider: 
\begin{enumerate}
\item both $\ell^{t_1}$ and $\ell^{t_2}$ intersect $h_1$ and $h_2$ (or $v_1$ and $v_2$);
\item both $\ell^{t_1}$ and $\ell^{t_2}$ intersect $h_1$ and $v_2$ (or $h_2$ and $v_1$);
\item $\ell^{t_1}$ intersects  $h_1$ and $h_2$ while $\ell^{t_2}$ intersects $v_1$ and $h_2$ (or $\ell^{t_1}$ intersects  $v_1$ and $v_2$, while $\ell^{t_2}$ intersects $v_1$ and $h_2$).
\end{enumerate}
In case (1), we  notice that
$$|x_1-x_2|\geq d(x_1,\ell^t_2)\geq \min\left(d(x^{t_1},\ell^{t_2}),d(y^{t_1},\ell^{t_2})\right)\geq \min\left(|x^{t_1}-x^{t_2}|,|y^{t_1}-y^{t_2}|\right)\sin \alpha, $$
 where $\alpha$ is the smallest angle between the diagonals and the horizontal sides of $\Omega$.
 \begin{equation}\label{eq:case1}
 \min\left(|x^{t_1}-x^{t_2}|,|y^{t_1}-y^{t_2}|\right)\leq \dfrac{|x_1-x_2|}{\sin \alpha}.
 \end{equation}
We may estimate $w(x_1)- w(x_2)$ as follows, assuming that 
$|x^{t_1}-x^{t_2}|=\min \{ |x^{t_1}-x^{t_2}|,|y^{t_1}-y^{t_2}|\}$,
\begin{equation}\label{r:c1}
 |w(x_1)- w(x_2)| = |t_1 -t_2| = | f(x^{t_1})-f(x^{t_2})|\le \omega(|x^{t_1}-x^{t_2}|)
\le \omega(|x_1- x_2|/\sin\alpha).
\end{equation}
In case (2), we may assume without the loss of generality that $d(x^{t_1},(L,-h))\leq d(x^{t_2},(L,-h))$ this implies by the monotonicity of $f$ that $d(y^{t_1},(L,-h))\leq d(y^{t_2},(L,-h))$. Then,
$$|x_1-x_2|\geq d(x_1,\ell^t_2)\geq \min\left(d(x^{t_1},\ell^{t_2}),d(y^{t_1},\ell^{t_2})\right)= \min\left(|x^{t_1}-x^{t_2}|\sin \beta_1,|y^{t_1}-y^{t_2}|\sin \beta_2\right).$$
Here $\beta_1$ is the angle formed by $\ell^{t_2}$ with $h_1$, and $\beta_2$ is the angle formed by $\ell^{t_2}$ with $v_2$.
Notice that
$$
\sin \beta_1=\dfrac{d\left(y^{t_2},(L,-h)\right)}{|y^{t_2}-x^{t_2}|}\geq \dfrac{|y^{t_2}-y^{t_1}|}{\diam(\Omega)},\qquad
\sin \beta_2=\dfrac{d\left(x^{t_2},(L,-h)\right)}{|y^{t_2}-x^{t_2}|}\geq \dfrac{|x^{t_2}-x^{t_1}|}{\diam(\Omega)},
$$
then
$$|y^{t_1}-y^{t_2}||x^{t_1}-x^{t_2}|\leq \diam(\Omega)\,|x_2-x_1|.$$
Therefore,
\begin{equation}\label{eq:case2}
\min\left(|x^{t_1}-x^{t_2}|,|y^{t_1}-y^{t_2}|\right)\leq \sqrt{\diam(\Omega)}\, \sqrt{|x_2-x_1|}
\end{equation}
and we similarly estimate $|w(x_1)- w(x_2)|$ in terms of $\omega$,
\begin{equation}\label{r:c2}
 |w(x_1)- w(x_2)|\le \omega\left(\sqrt{\diam(\Omega)}\, \sqrt{ |x_1- x_2|}\right).
\end{equation}

In case (3), we  have that $y^{t_1}$, $y^{t_2}$ belong to $h_2$. Then,
$y^0 = \frac{1}{2}(y^{t_1} +y^{t_2})$ also belongs to $h_2$. We notice that the segment $\ell((L,h),y^0)$ intersects the line $\ell(x_1,x_2)$ at a point $x^0$.
Furthermore, $|x_1-x_2|\geq|x_1-x^0|$, $|x_2-x^0|$. Estimating $|x_1-x^0|$ using case $1$, we get
$$\dfrac{|x_1-x^0|}{\sin \alpha}\geq \min\left(|x^{t_1}-(L,h)|,|y^{t_1}-y^{0}|\right).$$
Using case (2), we have 
$$
\sqrt{\diam(\Omega)}\sqrt{|x_2-x^0|}
\geq  \min\left(|(L,h)-x^{t_2}|,|y^{0}-y^{t_2}|\right).$$
Combining these estimates with (\ref{r:c1}) and  (\ref{r:c2}) we obtain,
\begin{equation*}
 |w(x_1)- w(x_2)|\le |w(x_1)- w(x_0)|+  |w(x_0)- w(x_2)|   \le
 \omega(|x_1- x_2|/\sin\alpha) + \omega(\sqrt{\diam(\Omega)}\, \sqrt{ |x_1- x_2|}). 
\end{equation*}


We conclude that $w$ is continuous, in fact for any $x_1$, $x_2\in\Omega$,
\begin{equation}\label{e:gw}
 |w(x_1) -w(x_2)| =| t_1 -t_2 |= |f(p_1) - f(p_2)| \le 
\omega\left(c_1|x_1-x_2|+c_2\sqrt{|x_1-x_2|}\right)
\end{equation}
where $c_1$, and $c_2$ depend only on $\Omega$.
\qed

\begin{lemma}\label{lem:uniform}
$u_n$ converges uniformly to $w$.
\end{lemma}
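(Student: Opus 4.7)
The plan is to prove uniform convergence via the Arzela-Ascoli compactness principle applied to $\{v_n|_{\overline{\Omega}}\}$, coupled with a direct identification of the pointwise limit using the level-set structure of Lemma \ref{lem:segment}.

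First, I would establish uniform equicontinuity on $\overline{\Omega}$. Since $\Omega_n$ is strictly convex with continuous data $f_n$ strictly monotone on each of the two arcs of $\partial \Omega_n$ obtained by normal perturbation of $\Gamma_1$ and $\Gamma_2$, the Sternberg theorem guarantees that $v_n$ is continuous on $\overline{\Omega_n}$ and every superlevel set $\{v_n \ge t\}$ has boundary of minimal perimeter, hence in the plane consists of line segments. Strict monotonicity forces $f_n^{-1}(t)$ to contain exactly two points for every $t\in (\min f,\max f)$, so $\{v_n = t\} = \ell_n^t$ is a single segment joining them. I then rerun the case analysis (1)--(3) from the proof of Lemma \ref{lem:continuity} inside $\Omega_n$; the argument is purely geometric and yields, for all $x_1,x_2 \in \overline{\Omega}$,
\begin{equation*}
|v_n(x_1) - v_n(x_2)| \le \omega_n\bigl(C_1^n |x_1 - x_2| + C_2^n \sqrt{|x_1 - x_2|}\bigr),
\end{equation*}
where $\omega_n$ is the continuity modulus of $f_n$ and $C_1^n, C_2^n$ depend only on $\mathrm{diam}(\Omega_n)$ and on the smallest angle between the diagonals of $\Omega_n$ and its boundary arcs. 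Since $\Omega_n\to \Omega$ in Hausdorff distance at rate $1/n$, these constants are bounded uniformly in $n$ by constants depending only on $\Omega$, and the definition $f_n(x+\nu\gamma_n(x)) = f(x)$ yields $\omega_n(s) \le \omega(s + 2/n)$ for $\omega$ the modulus of $f$. Uniform equicontinuity follows; uniform boundedness is immediate from $\|v_n\|_\infty \le \|f_n\|_\infty = \|f\|_\infty$.

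Second, I would identify the pointwise limit. Fix $z\in \Omega$ and set $t_n := v_n(z)$, so that $z\in \ell_n^{t_n} = [x_n^{t_n}, y_n^{t_n}]$ with endpoints on the two respective arcs of $\partial \Omega_n$. Passing to a subsequence, $x_n^{t_n}\to \bar{x}$, $y_n^{t_n}\to \bar{y}$ and $t_n\to \bar{t}$; the definition of $f_n$ together with continuity of $f$ force $\bar{x}\in \Gamma_1$, $\bar{y}\in \Gamma_2$ and $f(\bar x)=f(\bar y)=\bar t$. Since $[\bar x,\bar y]$ contains $z$, the uniqueness in Lemma \ref{lem:segment} gives $[\bar x,\bar y] = \ell^{w(z)}$ and hence $\bar t = w(z)$. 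Independence of the subsequence yields $v_n(z)\to w(z)$; the same reasoning extends to $z\in \partial \Omega$ after setting $w|_{\partial \Omega} := f$.

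Pointwise convergence together with uniform equicontinuity on the compact set $\overline{\Omega}$ then yields uniform convergence $v_n\to w$ on $\overline{\Omega}$, so in particular $u_n\to w$ uniformly on $\Omega$; as a byproduct, the $L^1$-limit $u$ from Step 2 coincides with $w$ almost everywhere. I expect the main technical obstacle to be the uniform control of the constants $C_1^n, C_2^n$ as $n\to \infty$, which hinges on showing that the angles between the boundary arcs of $\Omega_n$ and its diagonals stay bounded away from zero. A judicious choice of the approximating arcs $\gamma_n$ in Step 1 (for instance, arranged to match the sides of $\Omega$ tangentially away from the corners) makes this straightforward.
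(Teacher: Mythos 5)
Your proof is correct in outline, but it takes a genuinely different route from the paper's. The paper does not invoke Arzel\`a--Ascoli at all: it observes that, by the construction of $f_n$ via the normal perturbation $\gamma_n$, the level segment $\ell^{t}_n=\partial\{u_n\ge t\}$ has its endpoints within $1/n$ of the endpoints of $\ell^{t}$, so every $x\in\Omega$ with $u_n(x)=t$ lies within $1/n$ of a point $x'\in\ell^{t}$ where $w(x')=t$; feeding this into the modulus-of-continuity estimate \eqref{e:gw} already proved for $w$ in Lemma \ref{lem:continuity} gives at once that the sequence is uniformly Cauchy (and uniformly close to $w$), with all the geometry done once and for all in the rectangle itself. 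Your route instead re-establishes the equicontinuity estimate for each $v_n$ on the curved domain $\Omega_n$ and identifies the limit pointwise by compactness of the level segments together with the uniqueness statement of Lemma \ref{lem:segment}. This works, but it is more expensive at exactly the point you flag: the case analysis of Lemma \ref{lem:continuity} uses the flat sides and the fixed corner angle of the rectangle, and transplanting it to the circle-arc domains $\Omega_n$ with constants $C_1^n$, $C_2^n$ uniform in $n$ requires a genuine (if routine) verification, plus the small additional observation that equicontinuity with modulus $\omega(\,\cdot\,+2/n)$ still suffices for Arzel\`a--Ascoli after handling finitely many initial indices separately. What your approach buys is independence from the prior continuity analysis of $w$ on $\Omega$ and a direct proof that the uniform limit is continuous up to $\overline{\Omega}$; what the paper's approach buys is brevity, since the only geometric input is the $1/n$-closeness of corresponding level segments, which is immediate from $0\le\gamma_n\le 1/n$.
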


\proof We evaluate
$|u_l(x) - u_k(x)|$ for $x\in \Omega$. Let us set, $u_l(x) = t_1$, $u_k(x) = t_2$. We notice that
$x \in \d \{u_k \ge t_1\} = \ell^{t_1}_k$ and $x \in \d \{u_l \ge t_2\} = \ell^{t_2}_l$. By construction,
the endpoints of $\ell^{t_1}_k$ and $\ell^{t_1}$ and respectively of $\ell^{t_2}_l$ and $\ell^{t_2}$ are at the distance not bigger than $\frac{1}{n}$, $n=\min\{k,l\}$.
Hence, we can find $x_k\in \ell^{t_1}$ (resp. $x_l\in \ell^{t_2}$) such that
$d(x, x_k)\le 1/k$, (resp. $d(x, x_l)\le 1/l)$.

Now, by \eqref{e:gw}
$$
|u_l(x) - u_k(x)| =| w(x_k) -  w(x_l)| \le
\omega\left(c_1|x_k-x_l|+c_2\sqrt{|x_k-x_l|}\right)\le  \omega\left(\dfrac{2c_1}{\min\{k,l\}}+\dfrac{c_2\sqrt{2}}{\sqrt{\min\{k,l\}}}\right). 
$$ 

So, $u_n$ converges uniformly to $w$.
\qed

{\it Step 5.} It remains to show that $w$ has the correct trace $f$.

\proof
We may assume without the loss of generality that $z\in \Gamma_1$. Take a sequence $x_n$ converging to $z$. Let $\ell^{t_n}
=[x^{t_n},y^{t_n}]$ be the corresponding segments such that $f(x^{t_n})=f(y^{t_n})=t_n$ and let $\ell^t=[z,z']$ 
be such that $f(z)=f(z')$.
As in Step 4, we have that,
$$
|w(x_n)-f(z)|=|t_n-t|\leq \omega\left(c_1|x_n-z|+c_2\sqrt{|x_n-z|}\right)
$$
Letting $n\to \infty$, we conclude that $\lim_{x\to z, x\in \Omega}w(x)=f(z)$. 

{\it Step 6.} The uniqueness of solutions follows from the structure of superlevel sets of any solution to \eqref{rr}. In fact, one can prove that if $v$ is a solution to $\eqref{rr}$ then $\partial\{v\geq t\}\cap \partial \Omega\subseteq f^{-1}(t)$, and hence by minimality of the superlevel sets, we conclude that $\partial\{v\geq t\}=\partial \{w\geq t\}$ and hence by the same argument presented in Lemma \eqref{p:8}, uniqueness follows.
 \qed

\subsection{Back to FMD}\label{sec:bFMD}
We will use the method used in the proof of Theorem \ref{t:r}.
We assume that in problem \eqref{eq:FMD}, $\Omega = (-L,L)\times(-h,h)$ and $g$ is a constant load $l_T$ acting on $T=[-t,t]$ and $l_B$ acting on $B=[-b,b]$ satisfying $\int_{\d\Omega} g\,dx =0$. In Mechanics, in this situation we say that the load is 
self-equilibrated, see \cite{duvaut}. 
We notice that since $g=\frac{\d f}{\d \tau}$, then
$$
f (x,y) = \left\{
\begin{array}{ll}
0 & y\in [-h,h], x=-L;\\
 l_B \min\{ (x+b)^+, 2b\}& x\in [-L,L], y= -h;\\
 l_T \min\{ (x+t)^+, 2t\}&  x\in [-L,L], y= h;\\
2 b l_B , & y\in [-h,h], x=L.
\end{array}
\right.
$$

\begin{proposition}\label{p:9}
 Let us suppose that  $\Omega = (-L,L)\times(-h,h)$ and $f$ is given by the formula above. Then, there exists a unique solution to (\ref{rr}). 
\end{proposition}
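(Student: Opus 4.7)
The given $f$ is continuous on $\partial\Omega$ (matching at the corner $(L,h)$ requires $2bl_B=2tl_T$, which is the self-equilibrium $\int_{\partial\Omega}g=0$) and non-decreasing along each L-shaped arc $\Gamma_1=h_1\cup v_1$ and $\Gamma_2=h_2\cup v_2$, but not strictly so: $f\equiv 2bl_B$ on all of $v_1$ and $f\equiv 0$ on all of $v_2$, with additional flat pieces on $h_1$ and $h_2$. Thus Theorem \ref{t:r} does not apply verbatim, but its scheme does. The plan is to mimic the proof: approximate $\Omega$ by strictly convex $\Omega_n$, solve via Sternberg--Williams--Ziemer to get continuous $v_n$, set $u_n=v_n|_\Omega$, and identify the uniform limit by describing the level set structure explicitly.

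For each $\lambda\in(0,2bl_B)$, the equation $f=\lambda$ has exactly one solution $(x^\lambda,h)\in h_1$ (with $x^\lambda\in(-t,t)$) and one solution $(y^\lambda,-h)\in h_2$ (with $y^\lambda\in(-b,b)$); define $\ell^\lambda:=[(x^\lambda,h),(y^\lambda,-h)]$. Setting also $\ell^0:=[(-t,h),(-b,-h)]$ and $\ell^{2bl_B}:=[(t,h),(b,-h)]$, these two bounding segments cut off two L-shaped regions $R_0$ (on the left, where $f\equiv 0$ on the adjacent boundary) and $R_{\max}$ (on the right, where $f\equiv 2bl_B$), while $\{\ell^\lambda\}_{\lambda\in[0,2bl_B]}$ foliates the remaining part of $\bar\Omega$. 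Define $w:\bar\Omega\to\R$ by $w\equiv 0$ on $R_0$, $w\equiv 2bl_B$ on $R_{\max}$, and $w\equiv\lambda$ on $\ell^\lambda$.

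Continuity of $w$ on $\bar\Omega$ and the trace identity $Tw=f$ follow from the Lipschitz dependence of $\ell^\lambda$ on $\lambda$, via the modulus-of-continuity estimates in Lemma \ref{lem:continuity} and Step 5 of the proof of Theorem \ref{t:r} (only case (1) of Lemma \ref{lem:continuity} occurs here, as every $\ell^\lambda$ joins $h_1$ to $h_2$). The approximants $u_n$ then converge uniformly to $w$ via the argument of Lemma \ref{lem:uniform}, and Proposition \ref{thm:Miranda} yields that $w$ is of least gradient; alternatively this is immediate, since the inner boundary of $\{w\geq\lambda\}$ in $\Omega$ is the single line segment $\ell^\lambda$, a minimal surface in $\R^2$, so Lemma \ref{bombieri}'s characterization applies.

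For uniqueness, let $v$ be any solution of \eqref{rr}; Lemma \ref{l:st} gives $\partial\{v\geq\lambda\}\cap\partial\Omega\subseteq f^{-1}(\lambda)$. When $\lambda\in(0,2bl_B)$, $f^{-1}(\lambda)=\{(x^\lambda,h),(y^\lambda,-h)\}$ is a two-point set, which forces the minimal boundary of $\{v\geq\lambda\}$ inside $\Omega$ to coincide with $\ell^\lambda$. The main obstacle is the handling of the endpoint levels $\lambda\in\{0,2bl_B\}$, where $f^{-1}(\lambda)$ is a large L-shaped set and the superlevel sets of $w$ are fat; the resolution is to combine the nesting provided by Lemma \ref{leo} with the limits $\lambda\to 0^+$ and $\lambda\to(2bl_B)^-$ of the already-determined segments, forcing $\partial\{v\geq 0\}=\ell^0$ and $\partial\{v\geq 2bl_B\}=\ell^{2bl_B}$. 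Hence $\partial\{v\geq\lambda\}=\partial\{w\geq\lambda\}$ for every $\lambda$, and Lemma \ref{p:8} together with Lemma \ref{flag} yields $v=w$ a.e.\ in $\Omega$.
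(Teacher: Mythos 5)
Your proof is essentially correct, but it takes a genuinely different route from the paper's. The paper keeps Theorem \ref{t:r} as a black box: it perturbs the \emph{data} rather than redoing the construction, adding a small piecewise-linear correction $k_\ep$ so that $f_\ep=f+k_\ep$ becomes strictly monotone on each $\Gamma_i$, takes the solution $u_\ep$ supplied by Theorem \ref{t:r}, notes $0\le|u_\ep-u|\le\ep$, and concludes by Miranda's stability (Proposition \ref{thm:Miranda}). You instead re-run the strictly convex domain approximation on the unperturbed $f$ and identify the limit explicitly: the foliation of the central quadrilateral by the chords $\ell^\lambda$ together with the two flat regions $R_0$, $R_{\max}$. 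What your approach buys is an explicit description of the solution (including its two fat level sets) and, more importantly, an actual uniqueness argument: the paper's closing appeal to Proposition \ref{thm:Miranda} really only delivers existence, whereas your two-point-preimage argument combined with Lemmas \ref{p:8} and \ref{flag} genuinely proves uniqueness (this is the adaptation of Step 6 of the proof of Theorem \ref{t:r} that the paper leaves implicit). What the paper's approach buys is brevity and the avoidance of re-examining uniform convergence in the flat regions, where Lemma \ref{lem:uniform} does not apply verbatim; in your version you should add a line explaining why $0\le v_n\le\lambda$ holds to the left of the $\lambda$-chord of $\Omega_n$, which is what forces $u_n\to 0$ on $R_0$. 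Finally, a citation slip: Lemma \ref{bombieri} is the forward implication (least gradient implies minimal level-set boundaries); the converse needed for your ``alternative'' existence argument is Miranda's criterion that area-minimizing superlevel sets imply least gradient, so the approximation-plus-stability route is the one to rely on.
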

\proof Notice that when $t=L$ and $b=h$, then $f$ is constant on $h_1$, $h_2$ and strictly monotone on $v_1$ and $v_2$ and a similar argument to the proof of Theorem \ref{t:r} can be used to show the existence of solutions to this case. Otherwise, notice that $f$ has the property, which was the basis of the construction in the proof of Theorem \ref{t:r}. Namely, each value of $f$, except for the maximum and minimum  is attained at exactly two points.
Let us define $k_\ep:\d\Omega\to \bR$ by the following formula,
$$
k_\ep(x, -h) = -  (x-b)^+\frac{\ep}{L-b} - (x+ b)^-\frac{\ep}{L-b}, \quad
k_\ep(x, h) = -  (x-t)^+\frac{\ep}{L-t} - (x+ t)^-\frac{\ep}{L-t},
$$
$$
k_\ep(L,y) = - l_Bb - \ep,\quad k_\ep(-L,y) = - \ep.
$$
We apply Theorem \ref{t:r} in the case $f_\ep = f+ k_\ep$ the resulting solution is $u_\ep$. We notice that $f- f_\ep$ is monotone on 
$\{-h\} \times [-b,b]$ and $\{h\} \times [-t,t]$. Let us denote by $Q$ the quadrilateral with vertices
$(-b,-h)$, $(b,-h)$, $(-t,h)$, $(t,h)$, then we conclude that 
$$
0\le |u_\ep - u| \le \ep.
$$
Then, due to Proposition \ref{thm:Miranda} our claim follows. \qed

We notice that the structure  of a solution to (\ref{eq:partialLG}) is simple: $\d\{ u \ge t\}$ are intervals connecting appropriate points on $T$ and $B$. Moreover, the load $g$, defining $f$, need not be piecewise constant.
%

\section{An explicit example: a strictly convex domain, piecewise  constant data}\label{s:e}
We present an example of a solution to (\ref{eq:partialLG}) and  (\ref{eq:partialFMD}), when $\Om$ is strictly convex with smooth boundary. We notice that Definition \ref{def:normaltrace} permits $g$ to be a distribution e.g. 
\begin{equation}\label{ex1}
 g = \sum_{i=1}^3 c_i\delta_{a_i},
\end{equation}
where $\delta_p$ notes a delta function located at $p\in \Omega$. Data given by this formula are admissible as long as $\int_{\d\Omega}\,d g= 0$, i.e. $\sum_{i=1}^3 c_i =0$. 

We assume that $\d\Om$ is parametrized by arclength, $[0,L)\ni s\mapsto x(s)\in \d\Om$. In this case
$g$ of the form \eqref{ex1} may be written as
\begin{equation}\label{r:f}
g = (\alpha_1+\alpha_2) \delta_{x(s_1)} - \alpha_2 \delta_{x(s_2)} -\alpha_1\delta_{x(0)}.
\end{equation}
Since $g=\frac{\d f}{\d \tau}$,
then $f$ is given by the following formula, 
\begin{equation}\label{ex2}
f=\left\{
\begin{array}{ll}
 0, & s\in [0,s_1),\\
\alpha_1+\alpha_2, & s\in [s_1,s_2),\\
\alpha_1, & s\in [s_2,L).
\end{array}
\right.
\end{equation}
We may assume that $\alpha_1$ and $\alpha_2$ are positive and in fact (\ref{ex2}) is the only possible datum up to the exact value and location on $s_1, s_2, L$. Other admissible functions $f$ are obtained from (\ref{ex2}) by shift in $s$ or change of orientation of the curve.

If in the problem (\ref{eq:LG}) data $f$ are given by (\ref{ex2}), then \cite{mazon} guarantees the existence but not uniqueness of solutions. We can show existence differently, by approximating $f$ with continuous data and passing to the limit. 
\begin{proposition}
 Let us suppose that $\Omega$ is bounded and strictly convex and $f$ is given by (\ref{ex2}). Then, there exists a unique solution to 
 \begin{equation}\label{r:gl}
 \min\left\{ \int_\Omega |D u|: u\in BV(\Omega), \, T
 u= f\right\}.
\end{equation}
 and whose superlevel sets are given by formula (\ref{r:det}) below.
\end{proposition}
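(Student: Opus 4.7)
The plan is to construct $u$ explicitly by prescribing its superlevel sets, then to verify that $u$ is of least gradient by approximating $f$ with continuous data and invoking the stability result \textit{Proposition \ref{thm:Miranda}}, and finally to obtain uniqueness via the superlevel-set reconstruction argument of Lemma \ref{p:8} and Lemma \ref{flag}.

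Set $A=x(0)$, $B=x(s_1)$, $C=x(s_2)$, and write $[P,Q]$ for the chord in $\overline{\Omega}$ connecting boundary points $P,Q$. The two chords $[A,B]$ and $[B,C]$, both meeting at $B$, partition $\Omega$ into three open wedges on which I set $u\equiv 0$, $u\equiv\alpha_1$, $u\equiv\alpha_1+\alpha_2$ respectively, arranged so that $u$ matches the boundary value of $f$ on the arc from $A$ to $B$, on the arc from $C$ to $A$, and on the arc from $B$ to $C$. Equivalently, the superlevel sets are
\[
\{u\geq t\}=\begin{cases}\Omega,&t\leq 0,\\ \text{component of }\Omega\setminus[A,B]\text{ meeting the arc through }C,&0<t\leq\alpha_1,\\ \text{component of }\Omega\setminus[B,C]\text{ meeting the arc not through }A,&\alpha_1<t\leq\alpha_1+\alpha_2,\\ \emptyset,&t>\alpha_1+\alpha_2,\end{cases}
\]
and this is intended to be the content of the formula labelled \textsf{(r:det)}.

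To establish minimality, I would approximate $f$ in $L^1(\partial\Omega)$ by a sequence of continuous $f_n$ obtained by linearly interpolating $f$ across each of its three jump points over arcs of length $1/n$. Strict convexity and smoothness of $\partial\Omega$ allow the Sternberg--Williams--Ziemer theorem to deliver a unique continuous least gradient solution $u_n$ with trace $f_n$. For any $t\notin\{0,\alpha_1,\alpha_1+\alpha_2\}$ the preimage $f_n^{-1}(t)$ consists of exactly two points (one on each of the two interpolation arcs where $f_n$ crosses level $t$); those two points converge as $n\to\infty$ to the appropriate pair among $\{A,B,C\}$, and by the SWZ description $\partial\{u_n\geq t\}$ is the chord joining them. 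Hence $\partial\{u_n\geq t\}$ converges to $[A,B]$ or $[B,C]$ according to the value of $t$, which yields $u_n\to u$ in $L^1(\Omega)$; \textit{Proposition \ref{thm:Miranda}} then identifies $u$ as a least gradient function, and $Tu=f$ is visible directly from the explicit definition of $u$ as a function that is constant on each wedge up to $\partial\Omega$.

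For uniqueness, let $v$ be another solution with $Tv=f$ and write $\mathcal{E}_t=\{v\geq t\}$. A verbatim repetition of the argument of Lemma \ref{l:st} shows $\partial\mathcal{E}_t\cap\partial\Omega\subseteq f^{-1}(t)$; for $t\notin\{0,\alpha_1,\alpha_1+\alpha_2\}$ this set is empty, so each component of $\partial\mathcal{E}_t\cap\Omega$ must be a line segment with both endpoints at the jump points $A,B,C$. Among the candidate configurations, the submodularity inequality $P(E\cup F,\Omega)+P(E\cap F,\Omega)\leq P(E,\Omega)+P(F,\Omega)$ together with strict convexity of $\Omega$ selects precisely the chords prescribed for $\{u\geq t\}$ (and rules out ``diagonal'' chords such as $[A,C]$). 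Therefore $\partial\mathcal{E}_t=\partial\{u\geq t\}$ for every $t$, and Lemma \ref{p:8} combined with Lemma \ref{flag} forces $v=u$ a.e. The main obstacle I anticipate is the trace identification for the limit: $L^1$ convergence of $u_n$ to $u$ does not automatically transfer traces, so $Tu=f$ has to be read off from the explicit wedge geometry of $u$ rather than from continuity of the trace operator; a secondary subtlety is handling the three exceptional levels $t\in\{0,\alpha_1,\alpha_1+\alpha_2\}$, which is resolved by the fact that superlevel sets are nested and their boundaries vary continuously in Hausdorff metric.
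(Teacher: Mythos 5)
Your existence argument follows the paper's route almost verbatim: approximate $f$ by continuous data agreeing with $f$ away from the three jump points, invoke Sternberg--Williams--Ziemer on the strictly convex domain, observe that the level-set chords of the approximants stabilize away from $x_0,x_1,x_2$ so that the approximating solutions are Cauchy in $L^1$, and conclude by Miranda's stability result (Proposition~\ref{thm:Miranda}). Your remark that $Tu=f$ must be read off the explicit wedge geometry rather than from $L^1$ convergence is correct and in fact more careful than the paper, which simply asserts the trace identity.

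The gap is in the uniqueness step, at exactly the point where the paper inserts its Corollary. Having correctly reduced to the statement that for $t\notin\{0,\alpha_1,\alpha_1+\alpha_2\}$ every component of $\partial\mathcal{E}_t$ is a chord with endpoints among $\{A,B,C\}$, you must still exclude the competing separating configuration: for $t\in(0,\alpha_1)$ the boundary $\partial\mathcal E_t$ could a priori be $[A,C]\cup[C,B]$ instead of $[A,B]$ (each piece is individually a minimal surface, and the corner at $C$ sits on $\partial\Omega$, so it cannot be removed by a perturbation compactly supported in $\Omega$). You attribute this exclusion to the submodularity inequality $P(E\cup F,\Omega)+P(E\cap F,\Omega)\le P(E,\Omega)+P(F,\Omega)$ together with strict convexity, but submodularity compares two competing superlevel sets and yields nestedness; it does not select the shorter of two separating configurations for a single level, and you give no argument for how it would. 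The paper's mechanism is different and is the actual content of its Corollary: if some level set were the triangle $\triangle x_0x_1x_2$, then integrating $\di z=0$ for the vector field $z$ of the $1$-Laplacian Euler--Lagrange equation over the triangle gives $0=\pm|x_0x_1|\pm|x_1x_2|\pm|x_0x_2|$, which is impossible by the triangle inequality; equivalently, a direct perimeter comparison with the competitor obtained by absorbing $\triangle x_0x_1x_2$ (whose closure meets $\partial\Omega$ only at three points, hence does not alter the trace) shows $|x_0-x_1|<|x_0-x_2|+|x_2-x_1|$ strictly, using strict convexity to ensure $x_2\notin[x_0,x_1]$. Without some version of this length/flux argument your proof does not pin down $\partial\mathcal E_t$, so the appeal to Lemma~\ref{p:8} and Lemma~\ref{flag} is not yet justified.
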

\proof Let us write $x_i = x(s_i)$, $i=0,1,2$, where $x(\cdot)$ parametrization used above. 
Let us take a sequence $f_\ep$ of continuous functions on $\d\Omega$ such that $f=f_\ep$ on 
$\{x\in\d\Omega: \dist(x,x_i)\ge \ep,\ i=0,1,2\}$ and $u^\ep$ a corresponding sequence of solutions to
\begin{equation}\label{r:gw}
 \min\left\{ \int_\Omega |D u^\e| :u^\ep\in BV(\Omega),\ T
 u = f_\ep\right\}.
\end{equation}
We can see that the structure of $\d\{u^\ep\ge f_\ep\}$ does not change if the intersection
$\d\{u^\ep\ge f_\ep\}\cap \d\Omega$ is at a distance greater than $\ep$ from the points $x_0$, $x_1$, $x_2$.
Thus,
$$
\| u^\ep - u^\de\|_{L^1(\Omega)} \le C\max\{\ep, \de\}(|x_0-x_1| + |x_0-x_2| + |x_2-x_1| ).
$$
As a result, $u^\ep \to u$ in $L^1$. Moreover, we can see that $Tu = f$. By the Stability Theorem, see Proposition  \ref{thm:Miranda}, 
$u$ is a function of least gradient satisfying our boundary conditions, hence $u$ is a solution to (\ref{r:gl}).
\qed

We notice that we have three distinguished points on $\d\Om$, they are
$$
x_0 = x(0), \quad x_1 = x(s_1), \quad x_2 = x(s_2).
$$
We have three candidates for $\d \{ u\ge \alpha_1 + \alpha_2\}$, if $u$ is a solution to  (\ref{ri1}),
$$
[x_0,x_1], \quad [x_1,x_2] , \quad [x_0,x_2].
$$
However, some choices are excluded.
\begin{corollary}
 If $u$ is a solution to (\ref{r:gl}), then it is not possible that $\d \{ u = t\}$ is any triangle.
\end{corollary}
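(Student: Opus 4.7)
The plan is to invoke the uniqueness and explicit structure of the solution obtained in the preceding proposition and then to read each level set $\{u=t\}$ off from the formula \eqref{r:det}.

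Because that proposition asserts that \eqref{r:gl} admits a unique solution whose superlevel sets are given by \eqref{r:det}, the corollary reduces to a direct inspection once \eqref{r:det} is in hand. First I would unfold the formula. For $s\in(0,\alpha_1)$ the superlevel set $\{u\ge s\}$ is the piece of $\Omega$ cut off by the chord $[x_0,x_1]$ and containing the arc of $\partial\Omega$ on which $f\ge s$; for $s\in(\alpha_1,\alpha_1+\alpha_2)$ it is the piece cut off by $[x_1,x_2]$ and containing the arc on which $f=\alpha_1+\alpha_2$. Denoting these two pieces by $H_{01}$ and $H_{12}$, one has $H_{12}\subsetneq H_{01}\subsetneq\Omega$, while $\{u\ge s\}=\Omega$ for $s\le 0$ and $\{u\ge s\}=\emptyset$ for $s>\alpha_1+\alpha_2$.

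Next, I would compute the level sets from $\{u=t\}=\{u\ge t\}\setminus\bigcup_{s>t}\{u\ge s\}$. For $t\notin\{0,\alpha_1,\alpha_1+\alpha_2\}$ the two sets on the right-hand side coincide, so $\{u=t\}$ has empty two-dimensional interior, hence its boundary cannot form a triangle. For the three exceptional values one obtains
\[
 \{u=0\}=\Omega\setminus H_{01},\qquad \{u=\alpha_1\}=H_{01}\setminus H_{12},\qquad \{u=\alpha_1+\alpha_2\}=H_{12}.
\]
Inspecting each: the boundary of $\{u=0\}$ consists of the chord $[x_0,x_1]$ together with the arc of $\partial\Omega$ running from $x_0$ to $x_1$ not through $x_2$; the boundary of $\{u=\alpha_1+\alpha_2\}$ consists of the chord $[x_1,x_2]$ together with the arc of $\partial\Omega$ from $x_1$ to $x_2$ parametrized by $[s_1,s_2]$; and the boundary of $\{u=\alpha_1\}$ consists of the chords $[x_0,x_1]$ and $[x_1,x_2]$ together with the arc of $\partial\Omega$ from $x_0$ to $x_2$ not through $x_1$. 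In every case the boundary contains a genuine arc of $\partial\Omega$, which, by strict convexity of $\Omega$, is not a line segment. Hence $\partial\{u=t\}$ is not a triangle for any $t$.

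The main obstacle is essentially bookkeeping: the superlevel sets produced by \eqref{r:det} are determined only up to Lebesgue measure zero, so in order to identify $\partial\{u=t\}$ unambiguously one has to fix the representative of $u$ using the convention \eqref{e:des}. With this convention the three displayed identities are equalities of sets, the three boundary descriptions above become exact, and the corollary follows by direct inspection.
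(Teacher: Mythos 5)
Your argument is circular in the context of this paper. You take formula (\ref{r:det}) as an established description of the superlevel sets and then observe that none of the resulting level sets is a triangle. But in the paper the logical order is the reverse: the preceding proposition only \emph{states} that the superlevel sets are given by (\ref{r:det}) ``below,'' and its proof (the approximation argument) establishes existence of $u$ without identifying the level sets. The identification of $\d\mathcal E_t$ is carried out \emph{after} the corollary --- ``Therefore, we conclude that (\ref{r:det})'' --- and the corollary is precisely the tool used to exclude the a priori admissible configuration in which some level set $\{u=t\}$ is the triangle with vertices $x_0,x_1,x_2$ bounded by the three chords. (Lemma \ref{l:st} and minimality only tell you that each $\d\mathcal E_t$ consists of segments with endpoints among $x_0,x_1,x_2$; without the corollary you cannot rule out the triangle, hence cannot write down (\ref{r:det}).) So your proof assumes exactly what the corollary is needed to establish, and in addition the uniqueness you invoke is itself only claimed after (\ref{r:det}) is in hand.

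The paper's proof is a short, self-contained argument that does not use (\ref{r:det}) at all: by the characterization of least gradient functions in \cite{mazon} there is a vector field $z\in L^\infty(\Omega;\R^2)$ with $\|z\|_\infty\le 1$, $\di z=0$, and $z=\frac{Du}{|Du|}$ on the jump set; if $\d\{u=t\}$ were a triangle $\bigtriangleup def$, then $z=\pm\nu$ on each side, so
$0=\int_{\bigtriangleup def}\di z=\int_{\d\bigtriangleup def}z\cdot\nu=\pm|de|\pm|ef|\pm|fd|$,
which is impossible by the (strict) triangle inequality. If you want a proof that fits the paper's development, this divergence-theorem argument is the missing idea; your level-set inspection can only serve as an \emph{a posteriori} consistency check once (\ref{r:det}) has been justified by other means.
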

\proof Let us suppose the contrary, i.e. there is $t\in\bR$ such that $\d \{ u = t\}$ is a triangle $\bigtriangleup def$, then by \cite{mazon} we have
$$
0 = \int_{\bigtriangleup def} \di \frac{\nabla u}{|\nabla u|} = \int_{\bigtriangleup def} \di z
= \int_{\d\bigtriangleup def} z\cdot \nu.
$$
Here, $z$ is a vector field with values in $L^\infty$, $\|z\|_\infty\le 1$ and agreeing with $\frac{\nabla u}{|\nabla u|}$ on the sides of $\bigtriangleup def$, see  \cite{mazon} for more details.

Of course on $\d\bigtriangleup def$, $z$ has to be equal to $\pm\nu$, where $\nu$ is the outer normal 
to $\d\bigtriangleup def$. Thus
$$
 \int_{\d\bigtriangleup def} z\cdot \nu = \pm |de| \pm |ef| \pm |ef|.
$$
The last sum may never vanish due to the triangle inequality. \qed

Therefore, we conclude that
\begin{equation}\label{r:det}
 \d \mathcal E_t =\left\{
\begin{array}{ll}
\ \emptyset & t\le 0\hbox{ or } t>\alpha_1+\alpha_2,\\ 
\ [x_0, x_1] & t\in (0,\alpha_1],\\
\ [x_1,x_2] & t\in (\alpha_1,\alpha_1+\alpha_2],
\end{array}
\right.
\end{equation}
where $\mathcal E_t$ are the superlevel sets of $u$.

We may use the argument developed in the course of proof of Theorem \ref{main2} to claim uniqueness despite the lack of continuity.

\section*{Acknowledgement} The work of PR was in part supported by  the Research Grant no 2013/11/B/ST8/04436 financed by the National
Science Centre, entitled: Topology optimization of engineering structures. An approach synthesizing the methods of:
free material design, composite design and Michell-like trusses. 

PR also thanks Professor Tomasz Lewi\'nski of Warsaw Technological University for stimulating discussions.

\end{document}